\documentclass[oneside,notitlepage,11pt]{amsart}

\usepackage[T1]{fontenc}

\usepackage{amssymb}
\usepackage{enumerate}
\usepackage[leqno]{amsmath}
\usepackage{amsfonts}
\usepackage{amsopn}
\usepackage{amstext}
\usepackage{amsthm}
\usepackage{easyReview}

\usepackage[all]{xy}
\newdir{ >}{{}*!/-9pt/@{>}}

\usepackage[colorlinks]{hyperref}

\newenvironment{pf}{\begin{proof}}{\end{proof}}

\newcommand{\eps}{\varepsilon}
\renewcommand{\phi}{\varphi}
\renewcommand{\rho}{\varrho}

\newcommand{\cl}{\operatorname{cl}}

\newcommand{\dist}{\operatorname{dist}}

\newtheorem{tw}{Theorem}[section]

\newtheorem{lm}[tw]{Lemma}
\newtheorem{prop}[tw]{Proposition}
\newtheorem{cor}[tw]{Corollary}
\newtheorem{claim}[tw]{Claim}
\theoremstyle{definition}
\newtheorem{df}[tw]{Definition}
\newtheorem{rem}[tw]{Remark}

\newtheorem{pyt}[tw]{Question}

\theoremstyle{remark}

\newcommand{\U}{\mathbb U}

\newcommand{\R}{\mathbb R}

\newcommand{\N}{\mathbb N}

\newcommand{\Q}{\mathbb Q}
\newcommand{\G}{\mathbb U}

\title{Topological size of the set of universal and ultrahomogeneous retractions on the Urysohn space}
\author{
{\sc J. B\k{a}k, J. Garbuli\'nska-W\k{e}grzyn, M. Pop\l{}awski}\\ \\
}
\address{J. B\k{a}k: 
		(ORCID 0000-0001-8027-7226): 
		Mathematics Department, 
		Jan Kochanowski University in Kielce,
		Uniwersytecka 7, 25-406 Kielce, Poland}
	\email{judyta.bak@ujk.edu.pl}
	
	\address{J. Garbuli\'nska-W\k{e}grzyn
		(ORCID 0000-0001-7217-2002): 
		Mathematics Department, 
		Jan Kochanowski University in Kielce,
		Uniwersytecka 7, 25-406 Kielce, Poland}
	\email{jgarbulinska@ujk.edu.pl}
	
	\address{M. Pop\l{}awski
		(ORCID 0000-0002-2725-9675): Institute of Mathematics,
         Lodz University of Technology, al. Politechniki 8,
         93-590 \L\'od\'z,
          Poland}
         \email{michal.poplawski.m@gmail.com}

\subjclass[2020]{54E50, 18A30, 18A35, 54C35}
\keywords{Urysohn space, Lipschitz map, universality, ultrahomogeneity}
\begin{document}

\maketitle
\vspace{-2cm}
\begin{abstract}
In this paper, we investigate the set $\mathcal{U}(\U)$ of universal and ultrahomogeneous $1$-Lipschitz retractions acting on the Urysohn space as the subspace of the space $\mathcal{R}(\U)$ of all $1-$Lipschitz retractions defined on the Urysohn space. Especially, we study Borel complexity and density $\mathcal{U}(\U)$ in $\mathcal{R}(\U).$ In order to do that, we introduce a new extension property $(UR^*)$ that is equivalent to the universality and ultrahomogeneity of a retraction, and a new pointwise retract topology.
\end{abstract}



\section{Introduction}
The space $\mathbb{X}$ is \emph{universal}, if it contains an isometric copy of every complete separable metric space.
Urysohn in \cite{ur} constructed a complete separable metric space $(\mathbb{U},d_{\U})$ that is universal. A more famous example of a universal space is $C([0,1])$.

A map $f : X\to Y$ is a $1-$Lipschitz if its Lipschitz constant $$\text{Lip}(f) = \sup_{x\neq y}\frac{d(f(x), f(y))}{d(x, y)} \leq 1.$$

Urysohn’s universal metric space $\U$ was characterized in \cite{ur} as being, up to isometry, the
unique Polish metric space with the following \emph{extension property}:
\begin{itemize}
	\item[(U)] for any finite metric space $X$ and any isometry $f:X_0 \to \U$, where $X_0\subseteq X$,  there exists an isometry
	$\bar{f}:X\to \U$ extending $f$ ($\bar{f}\restriction _{X_0}=f$).

\end{itemize}
An analogous extension property holds for the rational Urysohn space $(\U_0, d_{\U_0})$, equipped with the metric $d_{\U_0}$ inherited from $\mathbb{U}$, as well as for any rational metric space $X$.




Michal Doucha investigated in \cite{d} the concepts of universality and ultrahomogeneity in a context of retractions defined on the Urysohn space.
\begin{df}
 We say that a $1$-Lipschitz retraction $U\colon \U \to U[\U]$ is \emph{universal}, if for every separable metric space $X$ and for every $1$-Lipschitz retraction $R \colon X \to R[X]$ there exists an isometry $i\colon X \to \U$ such that for every $x\in X$ we have
	$$i\circ R(x)=U\circ i(x) \quad \mbox{ and }\quad \dist(x,R[X])=\dist (i(x), U[\U]).$$
\end{df}

\begin{df}{[Remark 2.9, \cite{d}]}\label{auto}
$(\U, U, U[\U])$ is \emph{ultrahomogenous} if satisfy the following property:
let $A_1,A_2\subseteq \U$ be two finite subsets that are isomorphic, i.e. there
exists an isometry $i: A_1\to A_2$ such that
\begin{itemize}
	\item for every $x \in A_1$ we have $i \circ U(x) =U \circ i(x)$,
	\item for every $x \in A_1$ we have $\mbox{dist}(x, U[\U]) = \mbox{dist}(i(x), U[\U])$.
\end{itemize}
Then $i$ extends to an autoisometry $I \colon \U \to \U$ with
\begin{itemize}
	\item for every $x \in \U$ we have $I \circ U(x) =U \circ I(x)$,
	\item for every $x \in \U$ we have $\mbox{dist}(x, U[\U]) = \mbox{dist}(I(x), U[\U])$.
\end{itemize}

\end{df}

In the further part of the article, the triple $((A, d), r, p)$ denotes a metric space $(A,d)$, $1-$Lipschitz retraction $r\colon A\to A$ and $1-$Lipschitz map $p\colon A\to [0,\infty)$ with \linebreak $r[A]=\{x \in A \colon p(x)=0\}$. If the metric on 
$A$ is clear from the context, we denote it by $(A,r,p)$. If the metric on $A$ and the map $p$ attain rational values, we call this triple rational, if the set $A$ is finite, we will say it is a finite triple. By a triple $(\U,U,D_U)$ we understand Urysohn space $\U$, $1-$Lipschitz retraction $U \colon \U \to U[\U]$ and $1-$Lipschitz function $D_U \colon \U \to [0,\infty)$ defined by $D_U(x)=\dist(x,U[\U])$ for any $x \in \U.$ If the function 
$D_U$ is clear from the context, we will denote it by $D$.

M. Doucha proved in \cite{d} that there exists an universal, ultrahomogeneous retraction $U$ defined on the Urysohn space on the subspace $U[\U] \subsetneq \U$ isometric to $\U$, satisfying the following condition [\cite{d}, Lemma 2.10, One-point extension property]
\begin{enumerate}
\item[($UR$)] 
$(\U, U, D)$, where $D(x)=\mbox{dist}(x, U[\U])$, has one-point extension, i.e for every triple $(A, r,p)$ and every embedding $i : A\to (\U, U, D )$ such that $U \circ i(x) = i \circ r(x)$ and $D \circ i(x) = p(x)$ for all $x\in A$, for every $B=A\cup \{b\}$, $(B,r',p')$, $r'\restriction A=r$, $p'\restriction A=p$, there exists an extension $i':B\to (\U, U, D)$ of $i$ such that for every $x\in B$  we have $$U \circ i'(x) = i' \circ r'(x)\quad \mbox{ and } \quad D \circ i'(x) = p'(x).$$
\end{enumerate}
Let us formulate a similar condition
\begin{itemize}
\item[(UR$^{\star}$)] \label{UR2}

for every $\eps >0$, any finite triple $(A, r,p)$ every  isometric embedding $i : A\to (\U_0,U, D )$ such that $$d_{\U_0}(U \circ i(x), i \circ r(x))<\eps \quad \mbox{ and } \quad |D \circ i(x) - p(x)|<\eps,$$ and every rational triple $(B,r',p')$, where $B=A\cup \{b\}, b\notin A$, $r'\restriction A=r$, $p'\restriction A=p$, there exists an isometric embedding {$i':B\to (\U_0, U, D)$} such that for every $x \in A$ we have $d_{\U}(i(x),i'(x))< 2\eps$  and for every $x \in B$ we have
$$d_{\U}(U \circ i'(x), i' \circ r'(x))<5\eps \quad \text{ and }  \quad |D \circ i'(x) - p'(x)|<3\eps.$$

\end{itemize}
In our paper we get the following result
\begin{tw}
	Let  $U\colon \U \to U[\U]$ be $1-$Lipschitz retraction. TFAE:
	\begin{enumerate}[(i)]
	\item $U$ satisfies $(UR)$;
	\item $U$ is universal and ultrahomogeneous;
    \item $U$ satisfies $(UR^{*})$.
	\end{enumerate}
\end{tw}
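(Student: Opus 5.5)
We prove the cycle (i)$\Rightarrow$(ii)$\Rightarrow$(i) first, and then (i)$\Leftrightarrow$(iii); the last equivalence carries the real difficulty.

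For (i)$\Rightarrow$(ii), universality comes from a back-and-forth-free forward construction. Take a separable $X$ with a $1$-Lipschitz retraction $R$ and put $p(x)=\dist(x,R[X])$. Choose a countable dense set $\{x_n\}\subseteq X$ and enlarge it so that it is closed under $R$. Then embed it point by point using $(UR)$: at each step the new finite set $B=A\cup\{b\}$ carries the restricted triple $(B,R\restriction B,p\restriction B)$. When $b\notin R[X]$, we first add $R(b)$, so that $r'$ maps $B$ into $B$. The embedding is isometric and hence extends to the completion, i.e.\ to $X$. By continuity of $U$, $D$, $R$ and $p$, the identities $U\circ i=i\circ R$ and $D\circ i=p$ pass to the limit.

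Ultrahomogeneity is the standard back-and-forth argument. Given an isomorphism $i\colon A_1\to A_2$, first close $A_1$ and $A_2$ under $U$; this works because $U$ is a retraction, so $U(U(x))=U(x)$ and $i$ extends compatibly. Then enumerate a countable dense set of $\U$ closed under $U$ and alternate one-point extensions from $(UR)$ forward and backward. The resulting isometry between dense subsets extends to an autoisometry $I$ commuting with $U$ and preserving $D$.

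For (ii)$\Rightarrow$(i), take $(A,r,p)$, $i$ and $(B,r',p')$ as in $(UR)$. The first step is to realise $(B,r',p')$ as a triple with $p'=\dist(\cdot,r'[B])$. In general $p'$ is only $1$-Lipschitz and vanishes exactly on $r'[B]$, so we adjoin extra points to $B$ to make $p'$ the true distance to the retract. Concretely, for each $x$ we glue a geodesic segment to $r'(x)$ and add points of the retract, and we use the $1$-Lipschitz condition together with $p'(x)\le d(x,r'(x))$ to keep distances consistent. Universality then embeds this enlarged space $B^+$ into $(\U,U,D)$ via some $j$. The sets $j[A]$ and $i[A]$ are isomorphic finite subsets, so ultrahomogeneity gives an automorphism $I$ with $I\circ j\restriction A=i$. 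Then $i'=I\circ j\restriction B$ is the required extension. The delicate point here is the enlargement; I expect $p'\le d(\cdot,r'(\cdot))$ to follow from $p'(r'(x))=0$ and the fact that $p'$ is $1$-Lipschitz, which makes the construction feasible.

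For (i)$\Rightarrow$(iii), apply $(UR)$ not to $i$ itself but to a corrected embedding. Because $U\circ i$ is $\eps$-close to $i\circ r$ and $D\circ i$ is $\eps$-close to $p$, we perturb $i[A]$, moving points by less than $2\eps$, so that it becomes an exact embedding of a triple on $A$ close to $(A,r,p)$. This uses the extension property (U) of $\U$ to place copies at the points $U(i(x))$ and to amalgamate the metrics. We then extend exactly by $(UR)$ and finally move the images into $\U_0$ by density, with a small loss; tracking the triangle inequalities yields the constants $5\eps$ and $3\eps$.

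For (iii)$\Rightarrow$(i), build $i'$ as a limit. Apply $(UR^\star)$ iteratively with $\eps_n=2^{-n}\eps_0$, each time feeding the previous approximate embedding back in. The approximating embeddings form Cauchy sequences, because consecutive displacements are below $2\eps_n$, which is summable. Their limit is an isometric embedding satisfying the exact identities, and it extends $i$ after an initial rational approximation of $A$ and the data $p$.

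The main obstacle is this last step, because $(UR^\star)$ requires $A\subseteq\U_0$ and rational triples, while $(UR)$ concerns arbitrary finite triples. We will approximate the given $(A,r,p)$ and $(B,r',p')$ by rational triples that are close in distortion, and we must check that the errors, including the distortion of the metric under rationalisation, stay summable. This requires a rational perturbation lemma for finite triples, which we expect to prove by small additive adjustments of the metric that preserve the triangle inequalities and the $1$-Lipschitz property of $r$ and $p$.
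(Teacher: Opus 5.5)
Your (i)$\Rightarrow$(ii) argument is fine. The enlargement you propose for (ii)$\Rightarrow$(i), however, does not exist in general. If $R$ is any $1$-Lipschitz retraction of a metric space $X$ and $y\in R[X]$, then $d(R(x),y)=d(R(x),R(y))\le d(x,y)$, and therefore
\[
d(x,R(x))\le 2\dist(x,R[X])\qquad (x\in X).
\]
A finite triple in the paper's sense need not satisfy this. Take $B=\{a,b\}$ with $d(a,b)=10$, $r'\equiv a$, $p'(a)=0$, $p'(b)=1$. No $B^+\supseteq B$ with a $1$-Lipschitz retraction extending $r'$ can have $\dist(b,R[B^+])=1$. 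Concretely, the retract point $y$ you place on the glued segment at distance $1$ from $b$ has $d(r'(b),y)=9>d(b,y)$. So $p'\le d(\cdot,r'(\cdot))$ is necessary but not sufficient; you need at least $d(x,r'(x))\le 2p'(x)$.

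The problem goes beyond your construction. Apply the same inequality to $U$ in $\U$, with $A=\{a\}$ and $i(a)\in U[\U]$: no $i'$ as demanded by $(UR)$ exists. So $(UR)$, read literally, fails for every $U$, and so does $(UR^\star)$ for small $\eps$, even though universal ultrahomogeneous retractions exist. The statement only makes sense if triples are restricted to realizable ones, i.e. finite subtriples of some $(X,R,\dist(\cdot,R[X]))$. With that restriction, (ii)$\Rightarrow$(i) needs no enlargement: you embed a realizing $X$ by universality and correct by ultrahomogeneity. This is what the paper's proof does when it applies universality directly to $(B,r,p)$, tacitly assuming realizability. You found a real issue, but your fix cannot close it.

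Your iteration in (iii)$\Rightarrow$(i) also fails as described, for three reasons.
\begin{itemize}
\item One application of $(UR^\star)$ at level $\eps_n$ returns errors $5\eps_n$ and $3\eps_n$. These exceed the tolerance $\eps_{n+1}$ needed to apply $(UR^\star)$ again, so the errors never shrink.
\item The $2\eps_n$ displacement bound holds only on the base $A$, so nothing makes the images of $b$ Cauchy.
\item On $A$ the limit is only within $\eps_0+2\sum_n\eps_n$ of $i$, with no reason to equal it.
\end{itemize}
The missing idea is the paper's error reset. At stage $n$, discard the approximate data and use the exact finite triple on $i[A]\cup j_n[B]\cup U[j_n[B]]$ with the true $U$ and $D$, where $j_n$ is the current approximate embedding (cf.\ Lemma~\ref{newretraction}). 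Amalgamate it with the target $(B,r',p')$ (Lemmas~\ref{maxamal} and~\ref{minamal}) so that a fresh copy of $b$ lies within $O(\eps_n)$ of both $b$ and the previous copy. Rationalise (Lemma~\ref{rmc}), then apply $(UR^\star)$ at level $\eps_{n+1}$ over a base containing copies of $i[A]$ and of the previous copy of $b$. Since the new embedding is isometric and moves this base by less than $2\eps_{n+1}$, the new image of $b$ is forced near the old one and the base stays near $i[A]$. This gives both the Cauchy property and exact agreement with $i$ on $A$.

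A similar enlargement is needed in (i)$\Rightarrow$(iii). An exact triple cannot live on $A$ itself when $r(a)=r(a')$ but $U(i(a))\ne U(i(a'))$, so the base must include the points $U(i(x))$.
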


Emphasize that $(UR)\Leftrightarrow(UR^{*})$ is inspired by analogous results on different extension properties for the Urysohn space and Gurari\u\i\ space. 

Namely, there is a comment in [\cite{WK}, p.58] that a condition
\begin{itemize}
	\item[(U)] for any finite metric space $X$ and any isometry $f:X_0 \to \U$, where $X_0\subseteq X$,  there exists an isometry
	$\bar{f}:X\to \U$ extending $f$ ($\bar{f}\restriction _{X_0}=f$).

\end{itemize} 
is equivalent to
\begin{itemize}
	\item[(U')] for any finite metric space $X$ and any isometry $f:X_0 \to \U$, where $X_0\subseteq X$,  then for any $\eps>0$ exists an isometry
	$\bar{f}:X\to \U$ with $d(f(x),\bar{f}(x))<\eps$ for any $x \in X_0.$

\end{itemize}

The second example concerns Gurari\u\i\ space i.e. the unique, up to a linear isometry, separable Banach space $\mathbb{G}$ satisfying the extension property: 
\begin{itemize}
	\item[(G)]
for every $\eps>0$, for every finite-dimensional spaces $X_0 \subseteq X$, for every linear isometric embedding $f_0:X_0\to \mathbb{G}$ there exists a linear $\eps$-isometric embedding $f:X \to \G$ such that $f\restriction  { X_0} = f_0$, where
\emph{$\eps$-isometric embedding} is a linear operator $f$ satisfying
$$(1-\eps)\|x\| \leq \|{f(x)}\| \leq (1+\eps)\| x\|$$
for every $x\in X$.
\end{itemize}
It can be shown (see \cite{WK}) that $(G)$ is equivalent to
\begin{itemize}
	\item[(G')]
for every $\eps>0$, for every finite-dimensional spaces $X_0 \subseteq X$, for every linear isometric embedding $f_0:X_0\to \mathbb{G}$ there exists a linear isometric embedding $f:X \to \G$ such that $\|f(x)-f_0(x)\|<\eps$ for any $x \in X_0.$
\end{itemize}
Let us introduce an example that motivates our research.  W. Kubi\'s in \cite{RW}  showed that $\mathbb{G}$ is the universal ultrahomogeneous object obtained as a Fra\"{\i}ss\'{e} limit. In \cite{BG} is shown that the family $\{I \circ \Omega \circ J^{-1} \colon I,J \colon \mathbb{G} \to  \mathbb{G} \mbox{ are autoisometries}\}$ forms a dense $G_{\delta}$ set in the space of all $1-$Lipschitz operators $\mathbb{G} \to \mathbb{G}$ endowed with the strong operator topology, when $\Omega \colon \mathbb{G} \to \mathbb{G}$ is $1-$Lipchitz universal operator constructed in \cite{GK}, obtained also as a Fra\"{\i}ss\'{e} limit.

Inspired by the above example we will define respective \emph{pointwise retract convergence} topology $\tau_{pr}$ on the set 
$$
\mathcal{R}(\U)=\{ R:\U\to \U \colon  R \text{ is $1-$Lipschitz retraction}\}
$$ 
by describing a basic neighbourhood of $U \in \mathcal{R}(\U)$ as
$$W(U)_{X,\eps}=\{R \in \mathcal{R}(\U) \colon d_{\U}(R(x),U(x))<\eps \mbox{ and } |D_U(x)-D_R(x)|<\eps \mbox{ for any } x \in X\},$$ where $X=\{x_1,x_2,\ldots,x_n\} \subset \U, \ \eps>0$ are fixed. 
In this topology 
\begin{tw}
The set \begin{align*}
\mathcal{U}(\U)=\{ U \in \mathcal{R}(\U) \colon  U \text{ is universal, ultrahomogeneous}\}
\end{align*} 
is a dense $G_{\delta}$ subset of $\mathcal{R}(\U).$
\end{tw}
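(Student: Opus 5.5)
Using the first theorem, we replace $\mathcal U(\U)$ by the set of $U\in\mathcal R(\U)$ satisfying $(UR^\star)$. Condition $(UR^\star)$ refers only to countable data: the rational points $\U_0$, finite tuples from $\U_0$, rational triples $(B,r',p')$, and rational $\eps$. It also refers only to the values of $U$ and $D_U$ at finitely many points. The plan is therefore first to show $G_\delta$ by writing $\mathcal U(\U)$ as a countable intersection of open sets, and then to prove density by a Baire category argument.

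\textbf{$G_\delta$.} Fix rational $\eps>0$, a finite tuple $\bar a=(a_1,\dots,a_n)$ from $\U_0$, a finite rational triple $(A,r,p)$ with $A=\{x_1,\dots,x_n\}$ such that $x_k\mapsto a_k$ is an isometry, and a one-point rational extension $(B,r',p')$. Let $P$ be the set of $R$ with $d(R(a_k),a_{r(k)})<\eps$ and $|D_R(a_k)-p(x_k)|<\eps$ for all $k$; $P$ is open in $\tau_{pr}$. Let $Q$ be the set of $R$ for which some isometric embedding $i'\colon B\to\U_0$ satisfies the three strict inequalities of $(UR^\star)$ for $R$. Each such inequality involves $R$ and $D_R$ at finitely many points of $\U_0$, so it defines a $\tau_{pr}$-open set. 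Since $\U_0$ is countable there are countably many choices of $i'$, so $Q$ is a union of open sets and hence open. The set $(\mathcal R(\U)\setminus P)\cup Q$ is then the union of the closed set $\mathcal R(\U)\setminus P$ and the open set $Q$. To make it genuinely open, I would replace the strict inequalities defining $P$ by non-strict ones with slightly smaller constants, so that $\mathcal R(\U)\setminus P$ becomes open. I would also check that this relaxation is harmless: the "$5\eps, 3\eps, 2\eps$" slack in $(UR^\star)$ is presumably there exactly to absorb such perturbations. The intersection over all countably many parameters is $G_\delta$, and it equals the set of $R$ satisfying $(UR^\star)$, which by the first theorem is $\mathcal U(\U)$.

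\textbf{Density.} Fix a basic neighbourhood $W(R)_{X,\eps}$ with $X$ finite. Doucha's retraction $U$ exists and is universal and ultrahomogeneous, so every $I\circ U\circ I^{-1}$ with $I$ an autoisometry of $\U$ lies in $\mathcal U(\U)$. It suffices to find such an $I$ that lands in $W(R)_{X,\eps}$. Consider the finite triple $(Y,r,p)$ with $Y=X\cup R[X]$, $r=R\restriction Y$ and $p=D_R\restriction Y$. This is a legitimate triple, since $R\restriction Y$ maps into $Y$ and is a $1$-Lipschitz retraction, and $p$ is $1$-Lipschitz with $p=0$ exactly on $r[Y]$. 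The last point needs checking: $D_R(x)=0$ iff $x\in R[\U]$ because $R[\U]$ is closed, and for $x\in Y$ we have $R(x)\in Y$. By universality of $U$ (or $(UR)$ applied iteratively starting from the empty set), there is an embedding $j\colon Y\to\U$ with $U\circ j=j\circ r$ and $D_U\circ j=p$. The Urysohn space is finitely ultrahomogeneous, so the isometry $j(Y)\to Y$ extends to an autoisometry $I$ of $\U$. Then $V=I\circ U\circ I^{-1}$ satisfies $V(x)=R(x)$ and $D_V(x)=D_U(I^{-1}x)=D_R(x)$ for $x\in X$, so $V\in W(R)_{X,\eps}$ exactly. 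Finally, $V$ is universal and ultrahomogeneous because these properties are invariant under conjugation by autoisometries.

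\textbf{Main obstacle.} I expect the $G_\delta$ step to be the difficult part, specifically making the "premise implies conclusion" sets open. The conclusion sets are open, but the premise $P$ is open rather than closed, so $\mathcal R(\U)\setminus P$ is not automatically open. I would first try to show that $(UR^\star)$ with the non-strict premise $\le\eps'$ for $\eps'<\eps$ still characterizes $\mathcal U(\U)$, which should follow from the proof of the first theorem. If that fails, I would instead use the semicontinuity of $R\mapsto D_R(x)$ to handle the premise.
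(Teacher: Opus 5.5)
Your route is the paper's. You reduce to $(UR^\star)$ via the first theorem and write $\mathcal U(\U)$ as a countable intersection of sets of the form ``the premise fails, or some $i'$ works'' (Proposition \ref{inter}). You then get density directly, by conjugating a $(UR)$-retraction with an autoisometry that extends a finite partial embedding (Theorem \ref{dp}).

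\textbf{The $G_\delta$ step.} This is the genuine gap: you leave the step as a plan (``presumably'', ``should follow'', ``if that fails''). However, the point you call the main obstacle is not one, because you never need $(\mathcal R(\U)\setminus P)\cup Q$ to be open. The set $\mathcal R(\U)\setminus P$ is a finite union of sets $\{R\colon d_\U(R(a_k),a_{r(k)})\ge\eps\}$ and $\{R\colon |D_R(a_k)-p(x_k)|\ge\eps\}$. Each of these equals $\bigcap_m\{R\colon \cdots>\eps-\frac1m\}$, so it is $G_\delta$ in $\tau_{pr}$. No semicontinuity is needed: $R\mapsto D_R(a)$ is continuous by the very definition of $\tau_{pr}$. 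Finite unions of $G_\delta$ sets are $G_\delta$, and $\bigcap_m G_m\cup Q=\bigcap_m(G_m\cup Q)$. Hence every piece is $G_\delta$, and so is the countable intersection. This is exactly how Proposition \ref{gd}(ii) argues, writing $F^c=\bigcap_m H_m$.

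Your relaxation also works, but for a trivial reason, unrelated to the $5\eps,3\eps,2\eps$ slack or to the proof of Theorem \ref{aprox}. Finitely many strict inequalities $<\eps$ hold iff the non-strict ones hold with some rational $\eps'<\eps$. Therefore $\mathcal R(\U)\setminus P_{<\eps}=\bigcap_{\eps'\in\Q,\ \eps'<\eps}(\mathcal R(\U)\setminus P_{\le\eps'})$, which is an intersection of open sets. The same finite-maximum remark justifies restricting $(UR^\star)$ to rational $\eps$.

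\textbf{The density step.} The phrase ``by universality of $U$'' needs care. Applied literally to the finite space $Y$ with retraction $r$, universality only gives $D_U\circ j=\dist(\cdot,r[Y])=\dist(\cdot,R[X])$. This can be strictly larger than $D_R$ on $Y$; already for $X=\{x\}$, $d_\U(x,R(x))$ may exceed $D_R(x)$. There are two ways to fix this:
\begin{enumerate}
\item apply universality to $(\U,R)$ itself and restrict the resulting embedding to $Y$; or
\item use your $(UR)$ iteration in a specific order. Send one point of $R[X]$ to any point of $U[\U]$, then add the remaining points of $R[X]$ before those of $X\setminus R[X]$. Then every intermediate set is $r$-invariant and $p$ vanishes exactly on its $r$-image.
\end{enumerate}
With $j$ obtained either way, your computation $V(x)=R(x)$ and $D_V(x)=D_R(x)$ on $X$ is correct. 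It is in fact cleaner than the paper's. Also note that no Baire category argument is needed for density, despite what your opening plan says.
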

We investigate the density and Borel complexity of the set $\mathcal{U}(\U)$ in $\mathcal{R}(\U)$ equipped both with the pointwise convergence topology and with the uniform convergence topology.

\section{Equivalent conditions of being universal and ultrahomogenous retraction}

Inspired by the result of~\cite{WK}, let us consider the following infinite game for two players, \emph{Eve} and \emph{Adam}.
Namely, Eve plays with finite triples $(X_{2k},r_{2k},p_{2k}), \ k \geq 0$ and
Adam plays with finite triples $(X_{2k+1},r_{2k+1},p_{2k+1}), \ k \geq 0$ such that $X_{k+1} \supseteq X_k$ and $r_{k+1}, p_{k+1}$ extends $r_k, p_k$ for any $k \geq 0$.
After infinitely many steps, we obtain a chain of finite triples $(X_k,r_k,p_k)_{k \in \omega}$. Let $r_\infty:X_\infty\to r_\infty[X_\infty]$ denote the unique $1-$Lipschitz retraction defined on the completion $X_\infty$ of $\bigcup_{k\in \omega} X_k$ such that $r_\infty \restriction X_k = r_k$ for every $k\in \omega.$ Similarly, let us define the unique $1-$Lipschitz function $p_\infty:X_\infty\to \R_0^+$ with  $p_\infty \restriction X_k =p_k$ for every $k\in \omega.$ Then $r_\infty[X_\infty]$ is the completion of $\bigcup_{k\in \omega} r_k[X_k]$  and $p_\infty(x)= \mbox{dist}(x,r_\infty[X_\infty])$ for any $x\in X_{\infty}$. 



\begin{tw}\label{URuu}

	Let  $U\colon \U \to U[\U]$ be $1-$Lipschitz retraction. TFAE:
	\begin{enumerate}[(i)]
	\item $U$ satisfies $(UR)$;
	\item $U$ is universal and ultrahomogeneous.
	\end{enumerate}
\end{tw}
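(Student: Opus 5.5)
We prove (i)$\Rightarrow$(ii) by the standard Fra\"{\i}ss\'e-type arguments, and (ii)$\Rightarrow$(i) by combining universality (to realize the extended triple somewhere) with ultrahomogeneity (to move it into place).

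\emph{(i)$\Rightarrow$(ii), universality.} Let $R\colon X\to R[X]$ be a $1$-Lipschitz retraction on a separable metric space and put $p(x)=\dist(x,R[X])$. We choose a countable dense set $\{x_n\}$ of $X$. We may assume it is closed under $R$, by adding $R(x_n)$ to the enumeration, since $R$ is continuous. Writing $A_n=\{x_1,\dots,x_n\}\cup R[\{x_1,\dots,x_n\}]$, we obtain an increasing chain of finite triples $(A_n,R\restriction A_n,p\restriction A_n)$. Each step adds at most two points, and when both $x$ and $R(x)$ are new we add $R(x)$ first. Hence every step is a one-point extension whose restriction to the old part agrees with the previous triple.

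Iterating $(UR)$ produces compatible embeddings $i_n\colon A_n\to(\U,U,D)$. Their union is an isometry on a dense subset; it extends to $X$ because $\U$ is complete. The identities $U\circ i=i\circ R$ and $D\circ i=p$ pass to the closure, since all maps involved are $1$-Lipschitz.

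\emph{(i)$\Rightarrow$(ii), ultrahomogeneity.} We use a back-and-forth argument. Given an isomorphism $i\colon A_1\to A_2$ of finite subsets, we enumerate dense sets $\{a_n\}$, $\{b_n\}$ of $\U$. At odd steps we add $a_n$ and $U(a_n)$ to the domain, extending the current partial isomorphism by $(UR)$ applied to $(\U,U,D)$ itself, with the source triple equal to the finite subset of $\U$ with its induced $U$ and $D$. At even steps we do the same for the inverse map with $b_n$. The union is a $U$- and $D$-preserving isometry between dense subsets, and it extends to an autoisometry $I$ with the required properties by continuity.

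\emph{(ii)$\Rightarrow$(i).} Take a triple $(A,r,p)$, an embedding $i$ as in $(UR)$, and a one-point extension $(B,r',p')$. Two issues must be handled first.
\begin{itemize}
\item Universality only applies to a retraction whose $p$ is the distance to the retract. We therefore realize $(B,r',p')$ inside a separable space $X\supseteq B$ with a retraction $R$ extending $r'$ and satisfying $\dist(x,R[X])=p'(x)$ on $B$. One choice is to glue to each $x\in B$ with $p'(x)>0$ points of the retract at distance approaching $p'(x)$. A cleaner choice is to take $X$ to be the limit triple $X_\infty$ from the game described before the theorem, where $p_\infty=\dist(\cdot,r_\infty[X_\infty])$ is already established. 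Either way we only need $X$ and $R$ to restrict to $(B,r',p')$.
\item Universality is stated for $r$-images that may not lie in $B$. So we also ensure that $A$ and $B$ are closed under $r$ and $r'$, enlarging $A$ by $r[A]$ if necessary, which $i$ respects because $U\circ i=i\circ r$.
\end{itemize}

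Universality then gives an embedding $j\colon X\to\U$ intertwining $R$ with $U$ and $\dist$ with $D$. The restriction $j\restriction A$ and $i$ are then isomorphic finite subsets in the sense of Definition~\ref{auto}. Ultrahomogeneity yields an autoisometry $I$ commuting with $U$ and preserving $D$, with $I\circ j\restriction A=i$. We set $i'=I\circ j\restriction B$.

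\emph{Main obstacle.} The hardest step will be constructing $(X,R)$ so that the abstract $p'$ really is the distance function to $R[X]$. This is where the game construction and the compatibility condition $r[A]=\{p=0\}$ will be used.
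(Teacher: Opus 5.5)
Your (i)$\Rightarrow$(ii) is essentially the paper's argument. You iterate $(UR)$ along a chain of finite $R$-closed triples and complete, and you run a back-and-forth with $(UR)$ applied to finite $U$-closed subsets of $\U$. Your skeleton for (ii)$\Rightarrow$(i), universality followed by ultrahomogeneity to move $j\restriction A$ onto $i$, is also the paper's. The gap is exactly the step you call the main obstacle, and it is not a technicality. Universality only realizes triples whose $p$ is the distance to the retract; the paper's proof simply applies universality to $(B,r,p)$ as if $p=\dist(\cdot,r[B])$. Neither of your two suggestions produces such a realization. If $R$ is a $1$-Lipschitz retraction of $X\supseteq B$ and $y\in R[X]$, then $d(r'(x),y)=d(R(x),R(y))\le d(x,y)$, hence $d(x,r'(x))\le 2d(x,y)$. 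Taking the infimum over $y$ gives the necessary condition
\[
d(x,r'(x))\le 2\,p'(x)\qquad (x\in B),
\]
which does not follow from ``$p'$ is $1$-Lipschitz and $r'[B]=\{p'=0\}$''. So gluing retract points ``at distance approaching $p'(x)$'' cannot in general be done with $R$ $1$-Lipschitz. The appeal to $X_\infty$ rests on the identity $p_\infty=\dist(\cdot,r_\infty[X_\infty])$, which the paper only asserts and which fails for an arbitrary play (e.g.\ a constant one). In fact, with triples as literally defined, $(UR)$ holds for no $U$. Take $A=\{c\}$, $r=\mathrm{id}$, $p=0$, $i(c)=u\in U[\U]$, and $B=\{a,c\}$ with $d(a,c)=1$, $r'(a)=c$, $p'(a)=\frac{1}{10}$. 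The point $v=i'(a)$ would satisfy $U(v)=u$, $d(v,u)=1$ and $D(v)=\frac{1}{10}$, contradicting the inequality above.

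So the statement must be read with $d(x,r(x))\le 2p(x)$ added to the notion of triple. This costs nothing in your proof of (i)$\Rightarrow$(ii), where every triple used is a restriction of a genuine distance function. You then still have to carry out the gluing, and under that axiom it can be done exactly, with one point per $x$:
\begin{itemize}
\item For each $x\in B$ with $p'(x)<\dist(x,r'[B])$, add a point $y_x$ and put $R(y_x)=y_x$.
\item Take the path metric generated by $d_B$ together with the edges $d(x,y_x)=p'(x)$ and $d(r'(x),y_x)=d(x,r'(x))-p'(x)\ge 0$.
\item The detour $x\to y_x\to r'(x)$ has length exactly $d(x,r'(x))$, so $d_B$ is preserved.
\item $p'$ extended by $0$ is $1$-Lipschitz along every edge, so $\dist(z,R[X])=p'(z)$ for $z\in B$.
\item $R$ is $1$-Lipschitz along the edge $\{x,y_x\}$ precisely because $d(x,r'(x))-p'(x)\le p'(x)$; the remaining edges are immediate.
\end{itemize}
With this finite $(X,R)$ in hand, your argument then goes through as written.
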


	

\begin{proof}
"$(i) \Rightarrow (ii) $"
Let us fix a triple $(\U, U, D)$ satisfying condition $(UR)$. \textbf{We will show that $U$ is universal.} Adam strategy can be described as follows.

Fix a countable set $\{x_i\}_{i\in \omega}$ which is dense in the space $\U$, especially countable set $\{U(x_i)\}_{i\in \omega}$ is dense in $U[\U]$. Let $(X_0, r_0,p_0)$ be the first move of Eve. For any point $u\in r_0[X_0]$ we can find point $v\in U[\U]$ and isometry $j:\{u\}\to \{v\}$, $j(u)=v$, such that $U\circ j(u)=U(v)=v=j(u)=j(r_0(u))$ and $p(u)=0=\mbox{dist}(v,U[\U])$. Now using $|X_0|-1$-times condition $(UR)$ we get isometry $i_0:X_0\to \U$ such that $U\circ i_0(x)=i_0\circ r_0(x)$ and $p_0=\mbox{dist}(x,U[\U])$ Adam defines a triple $(X_1,r_1,p_1)$ in the following way:

\begin{itemize}
	\item if $x_0\in f_0[X_0]$ then $(X_1,r_1,p_1)=(X_0,r_0,p_0)$.
	\item otherwise let $y_0$ be such a point that  $d_{X_1}(x,y_0)=d_\U(f_0(x),x_0)$  for $x\in X_0$. Let us take a point $b_0=U(x_0)$($b_0=x_0$ or not) and let $a_0$ be such that 
	$$d_{X_1}(x,a_0)=d_\U(f_0(x),b_0) \ \text{and} \ d_{X_1}(y_0,a_0)=d_\U(x_0,b_0)$$
	for $x\in X_0$. Let us consider a metric space $X_1=X_0\cup\{y_0,a_0\}$. Put
	$$r_1\restriction X_0=r_0, r_1(y_0)=a_0, r_1(a_0)=a_0,$$
	$$p_1\restriction X_0=p_0, p_1(y_0)=\mbox{dist}(x_0,U[\U], p_1(a_0)=0.$$ 
\end{itemize}
We define an isometry 
$$f_1\restriction X_0=f_0, \ f_1(y_0)=x_0, \ f_1(a_0)=b_0.$$ 
This ensures that $f_1\circ r_1=U\circ f_1$ and $p_1(x)=\mbox{dist}(f_1(x),U[\U])$.

Suppose $n=2k>0$ and $(X_n, r_n,p_n)$ was the last move of Eve, $n\in \N$. We assume that triple $(X_{n-1}, r_{n-1},p_{n-1})$ and isometry $f_{n-1}:X_{n-1}\to \U$ has already been fixed, $\{x_0, \dots, x_{k-1}\}\subseteq f_{n-1}[X_{n-1}]\subseteq \U$. We know that $X_{n-1}\subseteq X_n$ and $r_n$ extends $r_{n-1}$, and $X_n\setminus X_{n-1}=\{a_1,\dots a_l\}$, then $l$-times using condition (UR) we get isometry $f_n:X_n\to \U$ such that $f_n\restriction X_{n-1}=f_{n-1}$, $f_n[X_n]\subseteq \U$ and $U\circ f_{n}(x)=f_{n}\circ r_{n}(x)$ and $p_n(x)=\text{dist}(f_{n}(x),U[\U])$ for every $x\in X_n$.  

Adam defines a triple $(X_{n+1},r_{n+1},p_{n+1})$ in the following way:
If $x_n\in f_n[X_n]$ then $(X_{n+1},r_{n+1},p_{n+1})=(X_n,r_n,p_n)$. Otherwise

Let $y_k$ be such a point that  $d_{X_{n+1}}(x,y_k)=d_\U(f_n(x),x_k)$  for $x\in X_n$. Let us take a point $b_k=U(x_k)$($b_k=x_k$ or not) and let $a_k$ be such that 
$$d_{X_{n+1}}(x,a_k)=d_\U(f_n(x),b_k) \ \text{and} \ d_{X_{n+1}}(y_k,a_k)=d_\U(x_k,b_k)$$
for $x\in X_n$. Let us consider a metric space $X_{n+1}=X_n\cup\{y_k,a_k\}$. Put
$$r_{n+1}\restriction X_n=r_n, r_{n+1}(y_k)=a_k, r_{n+1}(a_k)=a_k,$$
$$p_{n+1}\restriction X_n=p_n, p_{n+1}(y_k)=\mbox{dist}(x_k,U[\U]), p_{n+1}(a_k)=0.$$ 
We define an isometry 
$$f_{n+1}\restriction X_n=f_n, \ f_{n+1}(y_k)=x_k, \ f_{n+1}(a_k)=b_k.$$ 
This ensures that $f_{n+1}\circ r_{n+1}=U\circ f_{n+1}$ and $p_{n+1}(x)=\mbox{dist}(f_{n+1}(x),U[\U])$.

We will check that $p_{n+1}$ defined as above is $1-$Lipschitz.  	Let us remains that $\mbox{dist}(x,U[\U])$ is a $1-$Lipschitz function, for $x\in \U$.

 Take $x,y\in  X_{n+1}.$ Then we have  $$|p_{n+1}(x)-p_{n+1}(y)|=|\mbox{dist}(f_{n+1}(x),U[\U])-\mbox{dist}(f_{n+1}(y),U[\U])| $$ $$ \leq d_{\U}(f_{n+1}(x),f_{n+1}(y))=d_{X_{n+1}}(x,y).$$
	The other cases are easier.

This ensures that $f_{n+1}\circ r_{n+1}=U\circ f_{n+1}$ and 
$p_{n+1}(x)=\mbox{dist}(f_{n+1}(x),U[\U])$

Let $X_{\infty}$ be the completion of $\bigcup\{X_n: {n\in \omega}\}$ and $r_{\infty}[{X_{\infty}}]$ be the completion of $\bigcup\{r_n[{X_n}]:{n\in \omega}\}$.
Let $\{r_n :X_n\to r_n[{X_n}]\}_{n\in \omega}$ be the sequence of $1-$Lipschitz retractions between finite metric spaces and $\{p_n :X_n\to \R_{0}^+\}_{n\in \omega}$ be the sequence of $1-$Lipschitz resulting from a fixed play, when Adam was using his strategy with $r_{n+1}\restriction X_n=r_n, \ p_{n+1} \restriction X_n=p_n$ for all $n \in \N$. Then for any $x \in \bigcup_{n \in \N} X_n$ the sequence $(r_n(x))_{n \in \N}$ is eventually constant and consequently convergent. Then we can define $r \colon \bigcup_{n \in \N} X_n \to r_\infty[X_\infty]$ by $r(x)=r_n(x)$ if $x \in X_n.$ By definition $r$ is $1-$Lipschitz. Since $r$ is $1-$Lipschitz, then $r$ is uniformly continuous into complete metric space $r_\infty[X_\infty].$ As a result, there is exactly one uniformly continuous extension $r_\infty:X_\infty\to r_\infty[X_\infty]$ of $r$, which is $1-$Lipschitz, too. Similarly, we construct $p_\infty:X_\infty\to \R_0^+$ as a unique uniformly continuous extension of the pointwise limit of the $1-$Lipschitz functions $p_n, \ n \in \N.$

Moreover, Adam get a sequence $\{f_n : X_n \to  \U\}_{n\in \N}$ of isometric embeddings such that $f_{n}\restriction X_{n-1}=f_{n-1}$ for each $n\in \omega$. Taking $f=\bigcup_{n\in \omega} f_n$ we get isometry $f:\bigcup_{n\in \omega} X_n\to \U$. Since $f$ is isometry, it can be extended to isometry $\bar{f}:X_\infty\to \U$. Since for every $i\in \omega$ we have that $p_i$ is $1-$Lipschitz function, $r_i$ $1-$Lipschitz retraction, then the extension satisfies $U \circ f(x) = f \circ r_\infty(x)$ and
$\mbox{dist}(x, r_\infty[X_\infty]) = \mbox{dist}(f(x), U[\U])$ for each $x \in V_\infty$.  The assumption $x_{n+1}\in f_{2n+1}[X_{2n+1}]$, guarantee that $f_{2n+1}[X_{2n+1}]$ contains points $x_0,\dots,x_{n}$, this implies that $X_\infty$ is dense in $\U$, especially $R[X]$ is dense in $U[\U]$. 

\textbf{Now, we will check that $U$ is ultrahomogeneous.}

Fix a finite triple $(A,U,D)$, and an isometric embedding $i:A\to \U$ such that $U\circ i(x)=i\circ U(x)$ and $\mbox{dist}(x,U[\U])=\mbox{dist}(i(x),U[\U])$ for any $x\in A$.

We will define sequences of finite metric spaces $\{A_n\}_{n\in \omega}$,  $\{B_n\}_{n\in \omega}$ and isometric embeddings $\{i_n:A_n\to \U\}_{n\in \omega}$, $\{j_n:B_n\to \U\}_{n\in \omega}$ such that  for any $n\in \omega$:
\begin{enumerate}[(i)]
	\item $A_n\subseteq A_{n+1}$ and $B_n \subseteq B_{n+1}$,
	\item $i_{n} \colon A_{n}  \to B_{n}, \ j_{n}=i_{n}^{-1} \colon B_{n} \to A_{n} $ are isometries with $i_n\restriction A_{n-1}=i_{n-1}$	and $j_n\restriction B_{n-1}=j_{n-1}$,	
	\item $U\circ i_n(x)=i_n\circ U(x)$ for any $x\in A_n$ and $U\circ j_n(y)=j_n\circ U(y)$ for any $y\in B_n$,
	\item $\mbox{dist}(x,U[\U])=\mbox{dist}(i_n(x),U[\U])$ for any $x\in A_n$ and $\mbox{dist}(y,U[\U])=\mbox{dist}(j_n(y),U[\U])$ for any $y\in B_n$,
	\item $\overline{\bigcup_{n\in \omega} A_n}=\U$ and $\overline{\bigcup_{n\in \omega} B_n}=\U$.
\end{enumerate} 
Let us enumerate $\U_0=\{e_n \colon n \in \N\}$ and put $E=\U_0 \cup U[\U_0].$

Taking $A_0:=A$, $B_0=i[A]$ and $i_0:=i$, $j_0:=i^{-1}$ we get first inductive step. Assume now that inductive condition holds for some $2n$, this mean we have finite triples \linebreak $(A_{2n},U \restriction A_{2n},D\restriction A_{2n})$, $(B_{2n},U \restriction B_{2n},D\restriction B_{2n})$, and isometric embeddings $i_{2n}:A_{2n}\to B_{2n}$, $j_{2n}:B_{2n}\to A_{2n}$ such that $U\circ i_{2n}(x)=i_{2n}\circ U(x)$, $U\circ j_{2n}(y)=j_{2n}\circ U(y)$ and $\mbox{dist}(x,U[\U])=\mbox{dist}(i_{2n}(x),U[\U])$, $\mbox{dist}(y,U[\U])=\mbox{dist}(j_{2n}(y),U[\U])$ for any $x\in A_{2n}$, $y\in B_{2n}.$ We consider finite triples $$(A_{2n+1},U \restriction A_{2n+1},D\restriction A_{2n+1}), \ (B_{2n+1},U \restriction B_{2n+1},D\restriction B_{2n+1}), $$ $$ (A_{2n+2},U \restriction A_{2n+2},D\restriction A_{2n+2}), \ (B_{2n+2},U \restriction B_{2n+2},D\restriction B_{2n+2}).$$

Put $A_{2n+1}=A_{2n} \cup \{e_{n},U(e_{n})\}$ and by $(UR)$ we find an isometry $i_{2n+1}:A_{n+1}\to \U$ such that $U\circ i_{2n+1}(x)=i_{2n+1}\circ U(x)$ and $\mbox{dist}(x,U[\U])=\mbox{dist}(i_{2n+1}(x),U[\U])$ for any $x\in A_{2n+1}$ and $i_{2n+1}\restriction A_{2n}=i_{2n}$. We define $B_{2n+1}=i_{2n+1}[A_{2n+1}]$ and $j_{2n+1}=i_{2n+1}^{-1} \colon B_{2n+1} \to A_{2n+1}.$ Set $B_{2n+2}=B_{2n+1} \cup \{e_{n},U(e_n)\}$ and by $(UR)$ we find an isometry $j_{2n+2}:B_{n+2}\to \U$ such that $U\circ j_{2n+2}(y)=j_{2n+2}\circ U(y)$ and $\mbox{dist}(y,U[\U])=\mbox{dist}(j_{2n+2}(y),U[\U])$ for any $y\in B_{2n+2}$ and $j_{2n+2}\restriction B_{2n+1}=j_{2n+1}$. We put $A_{2n+2}=j_{2n+2}[B_{2n+2}]$ and $i_{2n+2}=j_{2n+2}^{-1} \colon A_{2n+2} \to B_{2n+2}.$ It can be easily seen that $i_{2n+2} \restriction A_{2n+1}=i_{2n+1}$ and $j_{2n+1} \restriction A_{2n+1}=j_{2n}.$


Let $A_{\infty}=\overline{\bigcup\{A_n: {n\in \omega}\}}$ and $B_{\infty}=\overline{\bigcup\{B_n: {n\in \omega}\}}$. Since $\U_0 \subset \bigcup\{A_n: {n\in \omega}\} \cap \bigcup\{B_n: {n\in \omega}\}$ we have $A_{\infty}=B_{\infty}=\U.$
We get sequences $\{i_n : A_n \to  \U\}_{n\in \N}$, $\{j_n : B_n \to  \U\}_{n\in \N}$ of isometric embeddings such that $i_{n}\restriction A_{n-1}=i_{n-1}$,  $j_{n}\restriction B_{n-1}=j_{n-1}$ for each $n\in \omega$. Taking $i=\bigcup_{n\in \omega} i_n$, $j=\bigcup_{n\in \omega} j_n$ we get isometry $i:\bigcup_{n\in \omega} A_n\to \U$, $j:\bigcup_{n\in \omega} B_n\to \U$ Since $i, j$ are isometries, them can be extended to isometries $\bar{i}:A_\infty\to \U$, $\bar{j}:B_\infty\to \U$. Since for every $i\in \omega$ we have $1-$Lipschitz function and $1-$Lipschitz retraction, then the extension satisfies $U \circ \bar{i}(x) = \bar{i} \circ U(x)$,  $U \circ \bar{j}(x) = \bar{j} \circ U(x)$ and
$\mbox{dist}(x, U[\U]) = \mbox{dist}(\bar{i}(x), U[\U])$, $\mbox{dist}(y, U[\U]) = \mbox{dist}(\bar{j}(y), U[\U])$ for each $x \in A_\infty$, $y \in B_\infty$.  

$(ii) \Rightarrow (i)$
Fix a finite triple $(B,r,p)$, with a metric space $B=A\cup \{b\}$, where $b\notin A$. Fix an isometric embedding $i:A\to \U$ such that $U\circ i(x)=i\circ r(x)$ and $p(x)=\mbox{dist}(i(x),U[\U])$ for any $x\in A$.

Using the universality of $U$ we find an isometric embedding $j:B\to \U$ such that $U\circ j(x)=j\circ r(x)$ and $p(x)=\mbox{dist}(j(x),U[\U])$ for any $x\in B$.

Note that we get two finite triples $(i[A],U,D)$ and $(j[B],U,D)$. Now we define an isometric embedding $k:j[A]\to i[A]$ as $k(j(a))=i(a)$ for any $a\in A$. 

Observe that for any $x\in j[A], x=j(a)$ we have:
$$U\circ k(x)=U(k(j(a)))=U(i(a))=i(r(a))=k(j(r(a))=k(U(j(a)))=k\circ U(x)$$ and
$$\mbox{dist}(k(x), U[\U])=\mbox{dist}(k(j(a)), U[\U])=\mbox{dist}(i(a), U[\U])=\mbox{dist}(j(a), U[\U])=\mbox{dist}(x, U[\U]).$$
Using ultrahomogeneity, we can extend the isometric embedding $k:A\to \U$ to autoisometry $K:\U\to \U$ such that $U\circ K(x)=K\circ U(x)$ and $\mbox{dist}(x, U[\U])=\mbox{dist}(K(x), U[\U])$.

Let $i':=K \circ j$, then $i'$ is an isometric embedding such that $i'\restriction A=k\circ j=i$ and for point $b$ we have $$U\circ i'(b)=U(K(j(b)))=K(U(j(b)))=K(j(r(b)))=i'\circ r(b)$$ and $$\mbox{dist}(i'(b), U[\U])=\mbox{dist}(K(j(b)), U[\U])=\mbox{dist}(j(b), U[\U])=p(b).$$

\end{proof}
The following result is a consequence of Fra\"{\i}ss\'{e} theory. For the reader’s convenience, we include a constructive proof.

\begin{tw}\label{retauto}
	Let $U \colon \U \to \U$ be a universal and ultrahomogeneous $1-$Lipschitz retraction.  Suppose that $T \colon \U \to \U$ is $1-$Lipschitz retraction. The following conditions are equivalent:
	\begin{enumerate}[(i)]
		\item $T$ is universal and ultrahomogeneous;
		\item there is an autoisometry $I \colon \U \to \U$ such that 
		$$U \circ I=I \circ T \quad \mbox{ and }  \dist(x,T[\U])=\dist(I(x),U[\U]) \mbox{ for all } x \in \U;$$
        \item there is an autoisometry $I \colon \U \to \U$ such that 
		$$U \circ I=I \circ T \mbox{ for all } x \in \U.$$
	\end{enumerate}
\end{tw}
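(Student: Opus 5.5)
The plan is to prove the cycle $(i)\Rightarrow(ii)\Rightarrow(iii)\Rightarrow(i)$. The implication $(ii)\Rightarrow(iii)$ is trivial. For $(iii)\Rightarrow(ii)$ (needed implicitly inside $(iii)\Rightarrow(i)$), observe that if $I$ is an autoisometry with $U\circ I=I\circ T$, then $I[T[\U]]=U[\U]$. Indeed, for $y=T(x)$ we get $I(y)=I(T(x))=U(I(x))\in U[\U]$. Conversely, for $u\in U[\U]$ write $u=I(x)$; then $U(I(x))=I(x)$, so $I(T(x))=I(x)$, hence $T(x)=x$ and $x\in T[\U]$. Since $I$ is a bijective isometry carrying $T[\U]$ onto $U[\U]$, we obtain $\dist(x,T[\U])=\dist(I(x),U[\U])$. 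So $(ii)$ and $(iii)$ are equivalent.

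For $(ii)\Rightarrow(i)$ I would transport property $(UR)$ along $I$ and then invoke Theorem~\ref{URuu}. Take a finite triple $(A,r,p)$, its one-point extension $(B,r',p')$, and an embedding $i:A\to\U$ with $T\circ i=i\circ r$ and $D_T\circ i=p$. Then $I\circ i$ is compatible with $(U,D_U)$, because $U\circ I\circ i=I\circ T\circ i=I\circ i\circ r$ and $D_U\circ I\circ i=D_T\circ i=p$. Since $U$ satisfies $(UR)$ by Theorem~\ref{URuu}, we get an extension $i'':B\to\U$ compatible with $(U,D_U)$. Then $i'=I^{-1}\circ i''$ extends $i$ and is compatible with $(T,D_T)$, using $T\circ I^{-1}=I^{-1}\circ U$. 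Hence $T$ satisfies $(UR)$, and Theorem~\ref{URuu} gives $(i)$.

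The main step is $(i)\Rightarrow(ii)$, which is a back-and-forth argument in the style of the ultrahomogeneity part of Theorem~\ref{URuu}, now run between two different triples $(\U,T,D_T)$ and $(\U,U,D_U)$. By Theorem~\ref{URuu}, both $T$ and $U$ satisfy $(UR)$. Fix a countable dense set $\{e_n\}$ in $\U$ and start with $A_0=B_0=\emptyset$. Inductively build finite sets $A_n$, closed under $T$, and $B_n$, closed under $U$, together with isometries $i_n:A_n\to B_n$ extending each other and satisfying
$$U\circ i_n=i_n\circ T \quad\text{and}\quad D_U\circ i_n=D_T.$$
At odd steps add $e_n$ and $T(e_n)$ to $A$. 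The enlarged set, with $T$ and $D_T$ restricted to it, is a finite triple: $D_T$ is $1$-Lipschitz, it vanishes exactly on the fixed points of $T$, and the set is $T$-closed because $T(T(e_n))=T(e_n)$. Apply $(UR)$ for $U$ at most twice, first adding $T(e_n)$ and then $e_n$, so that each intermediate set stays retraction-closed. At even steps do the same in reverse, using $(UR)$ for $T$ and $i_n^{-1}$.

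The union of the $i_n$ is an isometry between dense subsets of $\U$ with dense image, so it extends to an autoisometry $I$. The relations $U\circ I=I\circ T$ and $D_U\circ I=D_T$ pass to the closure by continuity, since all maps involved are $1$-Lipschitz. The main obstacle I expect is the bookkeeping of the one-point extensions: I must check that each intermediate finite set is a legitimate triple, with the retraction mapping it into itself and $p$ vanishing exactly on the fixed set, so that $(UR)$ actually applies. Adding the retraction image before the point itself handles this.
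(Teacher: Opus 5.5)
Your proposal is correct and follows the paper's proof. For $(i)\Rightarrow(ii)$ you run a back-and-forth between $(\U,T,D_T)$ and $(\U,U,D_U)$, using $(UR)$ for both retractions via Theorem~\ref{URuu}. For the converse you transport $(UR)$ along $I$. The paper does $(iii)\Rightarrow(i)$ directly, but only by using $T[\U]=I^{-1}[U[\U]]$, which is exactly your $(iii)\Rightarrow(ii)$ observation.

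Your version is slightly tidier in two places. You add $T(e_n)$ before $e_n$, so every intermediate set is a legitimate triple. You also extend the union of the partial isometries by density, rather than taking the paper's pointwise limit of autoisometries $I_n$.
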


\begin{proof}
	$"(i) \Rightarrow (ii)"$
	
	Enumerate $\U_0=\{x_n \colon n \in \N \}$ and set $U_n=\{x_i \colon 1 \leq i \leq n\}.$ Let us denote $D_T=\dist(\cdot,T[\U]) \colon \U \to \R, \ D_U=\dist(\cdot,U[\U]) \colon \U \to \R.$ We will build a sequence $(i_n)_{n \in \N}$ of isometries and sequences $((X_n,r_n,p_n))_{n \in \N}, \ ((Y_n,r'_n,p'_n))_{n \in \N}$ of triples such that $i_n \colon X_n \to Y_n$ is surjective and $U_n \subset X_n \subset X_{n+1}, \ U_n \subset Y_n \subset Y_{n+1}$ for all $n \in \N.$  Fix $z \in T[\U], \ y \in U[\U]$ and define an isometry $i_0 \colon \{z\} \to \{y\}, \ i_0(z)=y.$ 
		Let us define $X_0=\{z\}.$ We define a triple 
	$(X_0,r_0,p_0)$ by $r_0=T \restriction X_0, \ p_0=D_T \restriction X_0.$
	
	Then we have 
	$$U \circ i_0(z)=U(y)=y=i_0(z)=i_0 \circ T(z)=i_0 \circ r_0(z)$$ and
	$$p_0(z)=\dist(z,T[\U])=0=\dist(y,U[\U])=\dist(i_0(z),U[\U])=D_U \circ i_0(z).$$

	Suppose that for some $n \in \N$ we defined $(X_n,r_n,p_n)$ and $i_n$ such that $$r_n=T \restriction X_n, \ p_n=p \restriction X_n$$ satisfying
	$$U \circ i_n(x)=i_n \circ r_n(x) \mbox{ and } p_n(x)=D_U \circ i_n(x)$$ for all $x \in X_n.$
	 We define a triple $(Z_n,U \restriction Z_n,D_U \restriction Z_n)$ for $Z_n=i_n[X_n].$ Note that $i_n^{-1} \colon Z_n \to X_n$ satisfies
	 $$i_n^{-1} \circ U(x)=T \circ i_n^{-1}(x) \mbox{ and } \dist(i_n^{-1}(x),T[\U])=\dist(x,U[\U])$$ for any $x \in Z_n.$ Let us define a triple $(Y_n,r'_n,p'_n)$ by 
	 $$Y_n=Z_n \cup \{x_n,U(x_n)\}, \ r_n'=U \restriction Z_n, \ p_n'=D_U \restriction Z_n.$$
	 We use $(UR)$ to find an isometry $j_{n+1}^{-1} \colon Y_n \to j_{n+1}^{-1}[Y_n]$ extending $i_n^{-1}$ with
	 $$j_{n+1}^{-1} \circ r_n'(x)=T \circ j_{n+1}^{-1}(x) \mbox{ and } p_n'(x)=D_T \circ j_{n+1}^{-1}(x)$$ for any $x \in Y_n.$
	 In other words, we found an isometry $j_{n+1} \colon j_{n+1}^{-1}[Y_n] \to Y_n$  ($j_{n+1}$ is an inverse of $j_{n+1}^{-1}$) extending $i_n$ with
	 $$U \circ j_{n+1}(x)=j_{n+1} \circ T(x) \mbox{ and } \dist(j_{n+1}(x),U[\U])=\dist(x,T[\U])$$ for any $x \in j_{n+1}^{-1}[Y_n].$
     Now, we define a triple $(X_{n+1},r_{n+1},p_{n+1})$ by 
     $$X_{n+1}=j_{n+1}^{-1}[Y_n] \cup \{x_{n+1}, T(x_{n+1})\}, \ r_{n+1}=T \restriction X_{n+1}, \ p_{n+1}=D_T \restriction X_{n+1}.$$
	Again, we use  $(UR)$ to find an isometry $i_{n+1} \colon X_{n+1} \to i_{n+1}[X_{n+1}] $ extending $j_{n+1}$ satisfying
	 $$U \circ i_{n+1}(x)=i_{n+1} \circ r_{n+1}(x) \mbox{ and } p_{n+1}(x)=D_U \circ i_{n+1}(x)$$ for any $x \in X_{n+1}.$ 
	 
	 Now, for any $n \in \N$ we define an autoisometry $I_n \colon \U \to \U$ extending $i_n \colon X_n \to Y_n.$ Note that $U_n \subset X_{n} \cap Y_{n+1}.$ Moreover we have $I_n \circ I_n^{-1}=I_n^{-1} \circ I_n=\textrm{Id}$ and pointwise limits $I=\lim I_n, \ I^{-1}=\lim I_n^{-1}$ are isometries. Then $I \colon \U \to \U$ is an autoisometry. Now, fix $x \in \U_0$ and find $k \in \N$ with $x \in U_k.$ Then, we have 
	 $$U \circ I(x)=U \circ \lim_{n \geq k, \ n \to \infty} I_n(x)=U \circ \lim_{n \geq k, \ n \to \infty} i_n(x)=\lim_{n \geq k, \ n \to \infty} U \circ i_n(x)=$$
	 $$=\lim_{n \geq k, \ n \to \infty} i_n \circ T(x)=\lim_{n \geq k, \ n \to \infty} I_n \circ T(x)=I \circ T(x)$$ 
     and
     $$D_T(x)=\lim_{n \geq k, \ n \to \infty}p_n(x)=\lim_{n \geq k, \ n \to \infty} D_U \circ i_n(x)=D_U \circ \lim_{n \geq k, \ n \to \infty} i_n(x)=D_U \circ I(x).$$
	Since all functions $I,U,T,D_U,D_T$ are continuous, we have
    $$U \circ I(x)=I \circ T(x) \mbox{ and } D_T(x)=D_U \circ I(x)$$ for any $x \in \U.$

    $"(ii) \Rightarrow (iii)"$ - obvious
    
	$"(iii) \Rightarrow (i)"$
	
	We will check that $T$ satisfies condition $(UR)$. Fix $Y\cup \{z\} \subset \U, z\notin Y$ and $1-$Lipschitz retractions $V \colon Y \to Y$ and $V' \colon Y\cup \{z\} \to Y\cup \{z\}$ such that $V'\restriction Y=V$. Fix isometry $J:Y\to \U$ such that 
	$$J \circ V(y)= T\circ J(y) \text{ and } \dist(y,V[Y])=\dist(J(y),T[\U])$$ 
for all $y \in Y$. 
We consider the isometry $j=I\circ J:Y\to \mathbb{U}$. Since
$$j\circ V(y)=(I\circ J) \circ V(y)=I\circ T\circ J(y)=I\circ I^{-1}\circ U\circ I\circ J(y)=U\circ (I\circ J)(y)=U\circ j(y)$$ 
\begin{multline*}\dist(y,V[Y])=\dist(J(y),T[\U])=\dist(J(y),I^{-1}\circ U\circ I[\U])=\dist((I\circ J)(y),U\circ I[\U])\\ =\dist (j(y), U[\U]) \end{multline*}
for $y\in Y$ and the retraction $U$ satisfies (UR) there exists an isometry $j':Y\cup \{z\}\to \mathbb{U}$ extending $j$ such that
$$j '\circ V'(y)=U \circ j'(y) \text{ and } \dist(y,V'[Y\cup \{z\}])=\dist(j'(y),U[\U])$$ for $y \in Y\cup \{z\}$. Let us consider now an isometry $J'=I^{-1}\circ j'$, then
$$J' \circ V'(y)=I^{-1} \circ j' \circ V'(y)=I^{-1} \circ U \circ j'(y)=(I^{-1} \circ U \circ I) \circ I^{-1} \circ j'(y)=T \circ J'(y)$$
	and $$\dist(y,V'[Y\cup\{z\}])=\dist(j'(y),U[\U])=\dist(I^{-1} \circ j'(y),I^{-1} \circ U[\U])=$$
	$$=\dist(J'(y),I^{-1} \circ U[\U])=\dist(J'(y),I^{-1} \circ U \circ I[\U])=\dist(J'(y),T[\U])$$ for any $y\in Y\cup \{z\}$
and $J'$ is an isometry extending $J$, because $j'$ extends $j$.

\end{proof}

\begin{rem} \label{rnr}
If there is an autoisometry $I \colon \U \to \U$ such that 
		$$U \circ I=I \circ T \quad \mbox{ and }  \dist(x,T[\U])=\dist(I(x),U[\U]) \mbox{ for all } x \in \U$$ then $I[T[\U]]=U[\U].$
\end{rem}
\begin{proof}
One can use any of the above equalities. For instance, since $\U$ is a complete metric space and $T[\U],U[\U]$ are closed we have 
$$\dist(x,T[\U])=0 \Leftrightarrow x \in T[\U] \mbox{ and } \dist(I(x),U[\U])=0 \Leftrightarrow I(x) \in U[\U]$$ for any $x \in \U$. Then we have $I[T[\U]] \subset U[\U].$ That inclusion together with surjectivity of $I$ guarantee that $I[T[\U]] = U[\U].$  
\end{proof}

\section{Equivalent variant of an extension property (UR)}

Let us recall a condition
\begin{itemize}
\item[(UR$^{\star}$)] \label{UR2}

for every $\eps >0$, any rational finite triple $(A, r,p)$, every rational isometric embedding $i : A\to (\U_0,U, D )$ such that $$d_{\U_0}(U \circ i(x), i \circ r(x))<\eps \mbox{ and } |D \circ i(x) - p(x)|<\eps,$$ and every rational triple $(B,r',p')$, where $B=A\cup \{b\}, b\notin A$, $r'\restriction A=r$, $p'\restriction A=p$, there exists a rational isometric embedding {$i':B\to (\U_0, U, D)$} such that  $d_{\U}(i(x),i'(x))< 2\eps$ for every $x \in A$ and 
$$d_{\U}(U \circ i'(x), i' \circ r'(x))<5\eps, \  \quad |D \circ i'(x) - p'(x)|<3\eps$$
for every $x \in B.$

\end{itemize}

In this section our aim is to prove that (UR$^{\star}$) is equivalent to (UR).

\begin{lm} \label{rmc}
Let $(X \cup Y,r,p)$ be a finite triple and let $d$ be a metric on $X\cup Y$ such that  $d[X \times X] \subset \Q.$ Then for any $\varepsilon>0$ there are a rational metric $\rho$ on $X \cup Y$ and a rational function $p' \colon (X \cup Y,\rho) \to [0,\infty)$ with $|p'-p|<\varepsilon, \ \rho \restriction X \times X=d \restriction X \times X$ such that $r,p'$ are $1-$Lipschitz with respect to $\rho$ and $d \leq \rho \leq d+\varepsilon.$
\end{lm}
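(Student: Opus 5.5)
The plan is to obtain $\rho$ as a shortest-path metric built from rational edge weights. Short-path closures of finitely many rational weights are automatically rational. We first modify the weights so that $r$ cannot expand them, and then define $p'$ by a McShane-type formula with rational data.

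\emph{Step 1 (rational weights).} Fix $\eta>0$ with $\eta<\varepsilon$. Define $w$ on pairs of distinct points of $X\cup Y$ by $w(u,v)=d(u,v)$ if $u,v\in X$. Otherwise let $w(u,v)$ be any rational number in $[d(u,v),d(u,v)+\eta]$, chosen symmetrically. Next, to make $r$ non-expanding, set
$$w'(s,t)=\min\Bigl(w(s,t),\ \min\{w(c,e)\colon r(c)=s,\ r(e)=t\}\Bigr),$$
which is still rational. It still satisfies $d(s,t)\le w'(s,t)\le w(s,t)$, because $d(s,t)=d(r(c),r(e))\le d(c,e)\le w(c,e)$. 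I then check that $w'(r(a),r(b))\le w'(a,b)$ for all $a,b$. If $a,b$ are not both in $r[X\cup Y]$, then $w'(a,b)=w(a,b)$, and $(a,b)$ is itself a preimage pair of $(r(a),r(b))$. If $a,b\in r[X\cup Y]$, then $r(a)=a$ and $r(b)=b$ since $r$ is a retraction, so the inequality is trivial.

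\emph{Step 2 (the metric).} Let $\rho(u,v)$ be the minimum of $\sum w'(z_i,z_{i+1})$ over finite chains $u=z_0,\dots,z_k=v$. This is a rational metric. Since $w'\ge d$ and $d$ satisfies the triangle inequality, we get $\rho\ge d$. Moreover $\rho\le w'\le w\le d+\eta\le d+\varepsilon$. On $X\times X$ we have $w'(x,x')\le w(x,x')=d(x,x')$, so $\rho=d$ there. Applying $r$ to a chain gives a chain from $r(u)$ to $r(v)$ whose $w'$-length is no larger, by Step 1. Hence $\rho(r(u),r(v))\le\rho(u,v)$, i.e.\ $r$ is $1$-Lipschitz with respect to $\rho$.

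\emph{Step 3 (the function $p'$).} For each $z\in X\cup Y$ pick a rational $q_z\in[p(z),p(z)+\eta]$, with $q_z=0$ whenever $p(z)=0$. Put $p'(x)=\min_{z}\bigl(q_z+\rho(x,z)\bigr)$. As a minimum of $1$-Lipschitz functions of $x$, $p'$ is rational-valued and $1$-Lipschitz with respect to $\rho$. Since $p$ is $1$-Lipschitz for $d\le\rho$, we get $p'(x)\ge\min_z\bigl(p(z)+d(x,z)\bigr)\ge p(x)$. Also $p'(x)\le q_x\le p(x)+\eta$, so $|p'-p|<\varepsilon$. Moreover $p'(x)=0$ iff $p(x)=0$, because $\rho(x,z)>0$ for $z\neq x$. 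Therefore the zero set of $p'$ is still $r[X\cup Y]$ and $(X\cup Y,r,p')$ is again a triple.

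\emph{Main obstacle.} The delicate point is the Lipschitz property of $r$ after perturbing $d$. A naive rounding up of the distances can make $\rho(r(u),r(v))>\rho(u,v)$, for instance when $r$ maps a pair in $X$ isometrically onto a pair meeting $Y$. The preimage-minimum modification $w'$ in Step 1, followed by taking the path metric, is exactly what resolves this. It uses only $r\circ r=r$, and it does not disturb $\rho\restriction X\times X=d$.
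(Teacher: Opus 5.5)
Your proof is correct, and it takes a genuinely different route from the paper's.

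\textbf{The paper's route.} The paper edits the distances of $d$ directly, in three stages. First, it perturbs the irrational distances that occur in degenerate triangles by amounts $\varepsilon_0/2^i$, ordered by size, and copies these perturbations to $r$-preimage pairs at equal distance. This makes every such triangle strict while keeping $r$ non-expanding. Second, it rounds each remaining irrational distance up by less than the available slack $\varepsilon_5$, which is built from $\varepsilon_3,\varepsilon_4$. It uses a smaller rounding for pairs inside $r[X\cup Y]$ than for the other pairs, so that $r$ still does not expand. Third, it replaces $p$ by $a\cdot p$ with $a<1$, which creates strict Lipschitz slack, and only then rounds $p$ up to rationals.

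\textbf{Your route.} You choose arbitrary rational upper approximations $w$ of the edge weights. You then enforce $w'(r(a),r(b))\le w'(a,b)$ edge by edge through the preimage-minimum, which uses only $r\circ r=r$ and non-expansiveness. Finally you let the shortest-path closure produce $\rho$. The triangle inequality, rationality, the bounds $d\le\rho\le d+\eta$, the equality $\rho=d$ on $X\times X$, and the $1$-Lipschitz property of $r$ then all follow with no case analysis. The McShane-type formula gives $p'$ with $p\le p'\le p+\eta$ and the same zero set, without any rescaling of $p$.

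\textbf{Comparison.} Your approach removes all of the slack bookkeeping: the constants $\varepsilon_1,\dots,\varepsilon_5$, the two-tier rounding, and the several ``easy to see'' checks in the paper, such as whether the intermediate perturbed function is a metric. It is also more robust. The one side effect of the paper's construction is that it never alters a distance of $d$ that is already rational. Your construction can match this by taking $w(u,v)=d(u,v)$ whenever $d(u,v)\in\Q$. Then $d\le\rho\le w'\le w=d$ forces $\rho(u,v)=d(u,v)$ on those pairs.
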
 

\begin{proof}
	First step: 
	
	Let us define sets $A$ and $B$ in the following way. If distinct points $x,y,z \in X \cup Y$ satisfy 
	$$d(x,z)=d(x,y)+d(y,z)$$ and at least one distance is irrational then add $(x,y,z)$ to $A$ and add all distances $$d(x,y),d(x,z),d(y,z)$$ to $B$. Set $C=B \setminus \Q$.
	
	We will define a metric $\eta$ on $X \cup Y$ with $d \leq \eta \leq d+\frac{\varepsilon}{4}$ such that $\eta$ and $d$ coincide on $X, \ ((X\cup Y,\eta),r,p)$  is a finite triple and $\eta$ satisfies the strong triangle inequality with respect to the set $A$, i.e. for any $(x,y,z) \in A$ we have $\eta(x,z)<\eta(x,y)+\eta(y,z).$

	Enumerate $C=\{c_1<c_2<\ldots<c_k\}$ for some $k \in \N.$ Set
	$$\varepsilon_1=\min(\{d(x,y)+d(y,z)-d(x,z) \colon x,y,z \in X \cup Y\} \setminus \{0\})$$
	and
	$$\varepsilon_2=\min(\{d(x,y)-d(r(x),r(y)) \colon x,y \in X \cup Y\} \setminus \{0\}).$$
	Put $\varepsilon_0=\frac{\min\{\varepsilon,\varepsilon_1,\varepsilon_2\}}{2}.$
	We define a metric $\eta$ on $X \cup Y$ in the following way: for any $(x,y,z) \in A$ with $c_i=d(u,v)$ for some distinct $u,v \in \{x,y,z\}$ and $1 \leq i \leq k, $ we put $\eta(u,v)=d(u,v)+\frac{\varepsilon_{0}}{2^i}.$ Moreover, if there are $u',v' \in X \cup Y$ such that $r(u')=u, \ r(v')=v$ and $d(u,v)=d(u',v')$ then we put $\eta(u',v')=\eta(u,v)$. For any other $u,v \in X \cup Y$ we put $\eta(u,v)=d(u,v).$
	It is easy to see that $\eta$ is a metric. 
	
	We will check that $\eta$ satisfies the strong triangle inequality with respect to $A$. Suppose that $(x,y,z) \in A$ and distance $d(x,z) \in C$ satisfies $d(x,z)=d(x,y)+d(y,z).$ Then at least one of the numbers $d(x,y),d(y,z)$ lies in $C$, let us say $d(x,y) \in C.$ By the definition of $\eta$, we have $\eta(x,z)=d(x,z)+\frac{\varepsilon_0}{2^{i_1}}$ and $\eta(x,y)=d(x,y)+\frac{\varepsilon_0}{2^{i_2}}$ for some $1 \leq i_2<i_1 \leq k.$ Then we have $$\eta(x,z)=d(x,z)+\frac{\varepsilon_0}{2^{i_1}}< d(x,y)+\frac{\varepsilon_0}{2^{i_2}}+d(y,z)\leq \eta(x,y)+\eta(y,z).$$  Another case $d(x,z)=d(x,y)+d(y,z)$ for $d(x,z) \notin C$ is immediate. Furthermore, if $d(x,z)<d(x,y)+d(y,z)$ then $\eta(x,y)+\eta(y,z)-\eta(x,z) \geq d(x,y)+d(y,z)-d(x,z)-\frac{\varepsilon_1}{2} \geq \frac{\varepsilon_1}{2}>0$ by the definition of $\eta$ and $\varepsilon_0.$
	
	Now, we will see that $r$ and $p$ are $1-$Lipschitz with respect to $\eta.$
	Firstly, see that if $d(x,y)=d(r(x),r(y))$, then we have $\eta(x,y)\leq\eta(r(x),r(y))$ by a definition of $\eta.$ 
	Secondly, if $d(x,y)>d(r(x),r(y))$ and $d(r(x),r(y)) \in C$, then $\eta(x,y)-\eta(r(x),r(y)) \geq d(x,y)-d(r(x),r(y))-\frac{\varepsilon_0}{2^{i}} \geq d(x,y)-d(r(x),r(y))-\frac{\varepsilon_2}{2} \geq \frac{\varepsilon_2}{2}>0$ for some $1 \leq i \leq k.$ Moreover, $p$ is $1-$Lipschitz with respect to $\eta$, since $d \leq \eta.$
	
	Second step:
	
	Set
	$$\varepsilon_3=\min(\{\eta(x,y)+\eta(y,z)-\eta(x,z) \colon x,y,z \in X \cup Y\} \setminus \{0\})$$
	and
	$$\varepsilon_4=\min(\{\eta(x,y)-\eta(r(x),r(y)) \colon x,y \in X \cup Y\} \setminus \{0\}).$$
	
	Put
	$$\varepsilon_5=\min\{\frac{\varepsilon}{4},\varepsilon_3,\varepsilon_4\}.$$ It is easy to see that if we define a metric $\rho$ in such a way that for any $x,y \in X \cup Y$ we get $\eta(x,y) \leq \rho(x,y)<\eta(x,y)+\varepsilon_5$, then the strong triangle inequality with respect to $A$ will hold for $\rho$, too.

	Fix $x,y \in X \cup Y$. If $\eta(x,y) \in \Q$, then we put $\rho(x,y):=\eta(x,y).$ Otherwise, we have two cases. 
	1) If $x,y \in r[X \cup Y]$, then we put any number $\rho(x,y) \in (\eta(x,y),\eta(x,y)+\frac{\varepsilon_5}{2}) \cap \Q$; 2) If $x \notin r[X \cup Y]$ or $y \notin r[X \cup Y]$, then we put any number $\rho(x,y) \in (\eta(x,y)+\frac{\varepsilon_5}{2},\eta(x,y)+\varepsilon_5) \cap \Q.$
	
	It is easy to check that $\rho$ is a metric. By the definition of $\eta$ and $\varepsilon_5$, we have that $d \leq \eta \leq \rho \leq \eta+\frac{\varepsilon}{4} \leq d+\frac{\varepsilon}{2}.$
	
	We will check that $r,p$ are $1-$Lipschitz with respect to $\rho.$ Indeed, we have $\rho(x,y)-\rho(r(x),r(y)) \geq \eta(x,y)-\eta(r(x),r(y))-\varepsilon_5>\varepsilon_4-\varepsilon_4=0$ if at least one of the distances $\rho(x,y),\rho(r(x),r(y))$ lies in $\R \setminus \Q.$ The other case is obvious. Moreover, $p$ is $1-$Lipschitz, since $d \leq \rho.$
	
Third step:

Find $0<a<1$ such that $p'':=a \cdot p$ satisfies $|p-p''|<\frac{\varepsilon}{2}.$ Obviously, $p'' \colon (X \cup Y,\rho) \to [0,\infty)$ is $1-$Lipschitz. Even more, for any distinct $x,y \in X \cup Y$ we have $|p''(x)-p''(y)|<\rho(x,y).$ Now, define $$m=\min\{\rho(x,y)-|p''(x)-p''(y)|\} \setminus \{0\}$$ and $\delta=\frac{\min\{m,\varepsilon\}}{2}.$ For any $x \in X \cup Y$ with $p''(x) \notin \Q$ choose $p'(x) \in (p''(x),p''(x)+\delta) \cap \Q.$ If $p''(x) \in \Q$ put $p'(x)=p''(x).$ Then, for any distinct $x,y \in X \cup Y$ with $p''(x) \geq p''(y)$ we have
$$|p'(x)-p'(y)| \leq \max\{p''(x)+\delta,p''(y)+\delta\}-\min\{p''(x),p''(y)\}=$$
$$ =p''(x)-p''(y)+\delta<|p''(x)-p''(y)|+m \leq |p''(x)-p''(y)|+\rho(x,y)-|p''(x)-p''(y)|=\rho(x,y).$$ Hence, $p' \colon (X \cup Y,\rho) \to [0,\infty) \cap \Q$ is $1-$Lipschitz and $|p'-p|<\varepsilon.$

\end{proof}


\begin{lm} \label{fpb}
Let $\eps > 0, \ A=\{a_1,a_2,\ldots,a_n\}, \ A'=\{a_1',a_2',\ldots,a_n'\}$. Suppose that two metric spaces $(A, d)$, $(A', d')$, {where $d'$ is rational} satisfies $|d(a_i,a_j)-d'(a_{i}',a_{j}')|<\varepsilon$ for any $1 \leq i,j \leq n.$ Suppose there is an isometry $i \colon (A, d) \to \U.$ Then there exists an isometry $i': (A', d') \to \U_0$ such that $d_{\U}(i(a_i), i'(a_i')) <\eps$  for any $1 \leq i \leq n$.
\end{lm}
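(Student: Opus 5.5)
The plan is to work inside the rational Urysohn space from the start: approximate the points $i(a_k)$ by points of $\U_0$, glue $(A',d')$ to these approximations by an explicit amalgamation metric, and then use the extension property of $\U_0$ for rational finite metric spaces.

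\textbf{Setting up the constants.} Put $c_0=\max_{k,l}|d(a_k,a_l)-d'(a_k',a_l')|<\eps$ and $\eta=\frac{\eps-c_0}{3}>0$.

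\textbf{Step 1: rational approximations.} Since $\U_0$ is dense in $\U$, choose $q_k\in\U_0$ with $d_\U(q_k,i(a_k))<\eta/2$. We also take $\eta/2$ smaller than half the minimal distance between distinct points $i(a_k)$, so that $q_k\ne q_l$ whenever $k\ne l$ and $Q=\{q_1,\dots,q_n\}$ is a copy of the index set. The metric $d_\U$ restricted to $Q$ is rational, and by the triangle inequality
$$|d_\U(q_k,q_l)-d'(a_k',a_l')|<c_0+\eta \quad\mbox{for all } k,l.$$
Fix a rational number $c\in(c_0+\eta,\,c_0+2\eta)$.

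\textbf{Step 2: the amalgamation metric.} On the disjoint union $Q\sqcup A'$ we keep $d_\U$ on $Q$ and $d'$ on $A'$, and we set
$$\rho(q_k,a_j')=\min_{m}\bigl(d_\U(q_k,q_m)+c+d'(a_m',a_j')\bigr).$$
This is rational and positive, since $c>0$. We then check the triangle inequality.
\begin{itemize}
\item Triangles with two points on one side and the mixed distance as the longest side follow directly from the triangle inequality inside $Q$ or inside $A'$.
\item The essential case is $d_\U(q_k,q_l)\le\rho(q_k,a_j')+\rho(q_l,a_j')$. Expanding the two minima with witnesses $m,m'$, the right-hand side is at least
$$d_\U(q_k,q_m)+d_\U(q_l,q_{m'})+2c+d'(a_m',a_{m'}').$$
Since $d'(a_m',a_{m'}')\ge d_\U(q_m,q_{m'})-c$, this is at least $d_\U(q_k,q_l)+c$.
\item The symmetric case $d'(a_k',a_l')\le\rho(q_j,a_k')+\rho(q_j,a_l')$ is proved in the same way.
\end{itemize}
Finally, taking $m=k$ in the definition gives $\rho(q_k,a_k')\le c$.

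\textbf{Step 3: embedding into $\U_0$.} Now $(Q\sqcup A',\rho)$ is a finite rational metric space, and the inclusion $Q\hookrightarrow\U_0$ is an isometry. By the extension property of $\U_0$ for rational finite metric spaces, this inclusion extends to an isometric embedding $g\colon Q\sqcup A'\to\U_0$. Define $i'=g\restriction A'$. This is an isometry of $(A',d')$ into $\U_0$, and
$$d_\U(i(a_k),i'(a_k'))\le d_\U(i(a_k),q_k)+\rho(q_k,a_k')<\tfrac{\eta}{2}+c_0+2\eta<c_0+3\eta=\eps.$$

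\textbf{Main obstacle.} The only real difficulty is forcing the image into $\U_0$ rather than merely into $\U$: an isometric copy of a rational space inside $\U$ need not lie in $\U_0$. This is why we replace $i[A]$ by the rational points $Q$ before amalgamating, and why we budget the slack $\eta$ so that the approximation error plus the rational gluing constant $c$ still stays below $\eps$. Verifying the triangle inequality for $\rho$ is routine, and the key estimate is the one isolated in Step 2.
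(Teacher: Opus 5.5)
Your proof is correct and follows essentially the same route as the paper's. Both arguments approximate each $i(a_k)$ by a point of $\U_0$, glue $(A',d')$ to these points with the amalgamation formula $\min_m\bigl(d_\U(q_k,q_m)+c+d'(a_m',a_j')\bigr)$, extend the inclusion using the extension property of $\U_0$, and restrict to $A'$.

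Your budgeting of constants is what makes the glued space a rational metric space: the approximation radius is tied to the slack $\eps-c_0$, and the rational gluing constant $c$ exceeds the total discrepancy. The paper uses $\eps/2$ both as the approximation radius and as the gluing constant, and that choice does not by itself secure this. The discrepancy between the approximating points and $A'$ can be close to $2\eps$, while the two gluing terms in the critical triangle add up only to $\eps$, so the triangle inequality can fail; moreover, $\eps/2$ need not be rational.
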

\begin{proof}
	For any $1 \leq i \leq n$ find $b_i \in \U_0$ with $d_{\U}(b_i,i(a_i))<\frac{\varepsilon}{2}.$ Define a set $B'=\{b_1',b_2',\ldots,b_n'\}$ such that $B' \cap \U=\emptyset.$ Consider a metric $\rho'$ on $B'$ given by $\rho'(b_i',b_j')=d_{\U}(b_i,b_j)$ for any $1 \leq i,j \leq n.$ There is a natural isometry $j' \colon (B',\rho') \to \U_0$ defined by $j'(b_i')=b_i$ for any $1 \leq i \leq n.$ Let us define a metric $\eta$ on $A' \cup B'$ by the formula $$\eta(b_i',a_i')=\eta(a_i',b_i')=\frac{\varepsilon}{2}$$
	$$\eta(b_j',a_i')=\eta(a_i',b_j')=\frac{\varepsilon}{2}+\min\{d'(a_i',a_k')+\rho'(b_k',b_j') \colon 1 \leq k \leq n\} $$
   for any $1 \leq i,j \leq n, \ i \neq j$ and 
	$$\eta\restriction{A' \times A'}=d', \eta\restriction{B' \times B'}=\rho'. $$
	Note that $\eta$ is a rational metric. Then, we use an extension property for $\U_0$ to find an isometry $j \colon (A'\cup B',\eta) \to \U_0$, which extends $j' \colon (B',\rho') \to \U_0.$ Now, consider an isometry $i' \colon (A',d') \to \U_0$ given by $i'=j \restriction{A'}.$ We will check that  $d_{\U}(i(a_i), i'(a_i')) <\eps$  for any $1 \leq i \leq n$. Indeed, we have 
	$$d_{\U}(i(a_i), i'(a_i')) \leq d_{\U}(i(a_i),b_i)+d_{\U}(b_i, i'(a_i'))$$
	$$<\frac{\varepsilon}{2}+d_{\U}(j(b_i'), j(a_i'))=\frac{\varepsilon}{2}+\eta(b_i', a_i')=\frac{\varepsilon}{2}+\frac{\varepsilon}{2}=\varepsilon$$ for any $1 \leq i \leq n.$
\end{proof}

\begin{lm}[\cite{d} {\bf Maximal amalgamation}]\label{maxamal}
	For any finite metric spaces $(X,d_X)$, $(Y,d_Y)$ such that $X\cap Y\neq \emptyset$ and $d_X\restriction (X\cap Y\times  X\cap Y)=d_Y\restriction (X\cap Y\times  X\cap Y)$ a function $	d_{X\cup Y}$ defined by the formula $$d_{X\cup Y}(x,y)=
	\begin{cases}
		d_X(x,y), &  \mbox{ if } x,y\in X,  \\
		d_Y(x,y), &  \mbox{ if } x,y\in Y,  \\
		\min\{d_X(x,z)+d_Y(z,y): z\in X\cap Y\}, & \mbox{ for } x\in X, y\in Y
	\end{cases}$$
	is a metric.
Moreover, if $d_X, d_Y$ are rational, then $d_{X\cup Y}$ is rational, too. If $(X,r_X,p_X),$ $(Y,r_Y,p_Y)$ are finite triples such that  $r_X\restriction X\cap Y= r_Y\restriction X\cap Y$ and $p_X\restriction X\cap Y= p_Y\restriction X\cap Y$, then exist $1-$Lipschitz retraction $r$ and $1-$Lipschitz function $p$ defined on $X\cup Y$.
\end{lm}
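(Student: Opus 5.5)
The plan is to check the three assertions of Lemma~\ref{maxamal} in order: that $d_{X\cup Y}$ is a metric, that it is rational when $d_X,d_Y$ are, and that $r_X,r_Y$ and $p_X,p_Y$ glue to a $1$-Lipschitz retraction $r$ and a $1$-Lipschitz function $p$ on $X\cup Y$.

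\textbf{Well-definedness and the metric axioms.} On $X\cap Y$ the first two cases agree by hypothesis. For $x\in X\cap Y$, $y\in Y$ the third formula gives at most $d_X(x,x)+d_Y(x,y)=d_Y(x,y)$. Conversely, for each $z\in X\cap Y$ we have $d_X(x,z)=d_Y(x,z)$, so the triangle inequality in $Y$ shows every term is at least $d_Y(x,y)$. Hence all cases coincide on overlaps.

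Symmetry is clear, and positivity holds because for $x\in X\setminus Y$ and $y\in Y\setminus X$ each term $d_X(x,z)+d_Y(z,y)$ is positive, since $z\neq x$.

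For the triangle inequality $d(x,w)\le d(x,y)+d(y,w)$, I would split according to which of $X$, $Y$ contain the three points. The only nontrivial configuration, up to symmetry, is $x,w\in X$ and $y\in Y\setminus X$. Choose minimizers $z_1,z_2\in X\cap Y$ with
$$d(x,y)=d_X(x,z_1)+d_Y(z_1,y)\quad\text{and}\quad d(y,w)=d_Y(y,z_2)+d_X(z_2,w).$$
Using $d_Y(z_1,z_2)=d_X(z_1,z_2)$, the triangle inequalities in $Y$ and then in $X$ give
$$d_X(x,w)\le d_X(x,z_1)+d_X(z_1,z_2)+d_X(z_2,w)\le d(x,y)+d(y,w).$$
The mixed configuration, e.g.\ $x\in X$, $y\in X$, $w\in Y$, follows directly from the definition as a minimum.

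\textbf{Rationality.} Each distance is a minimum of finitely many sums of rationals, hence rational.

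\textbf{Gluing the maps.} Define $r=r_X\cup r_Y$ and $p=p_X\cup p_Y$; these are well defined because the two pieces agree on $X\cap Y$. Then $r$ maps $X\cup Y$ into itself and is idempotent, so it is a retraction.

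The Lipschitz property only needs checking for $x\in X$, $y\in Y$. Take a minimizer $z$, so that $d(x,y)=d_X(x,z)+d_Y(z,y)$. For $p$,
$$|p(x)-p(y)|\le|p_X(x)-p_X(z)|+|p_Y(z)-p_Y(y)|\le d(x,y).$$
For $r$, the points $r(x)\in X$ and $r(y)\in Y$, so by the previous step
$$d(r(x),r(y))\le d(r_X(x),r_X(z))+d(r_Y(z),r_Y(y))\le d_X(x,z)+d_Y(z,y).$$
This uses $r_X(z)=r_Y(z)$ and the triangle inequality of the amalgamated metric just proved.

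If one wants $\{p=0\}=r[X\cup Y]$, this follows piecewise from the same property for each triple. The main (mild) obstacle is the triangle inequality through $Y\setminus X$; everything else is routine.
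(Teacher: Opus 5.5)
Your proof is correct and follows the paper's argument: you glue $r_X,r_Y$ and $p_X,p_Y$ along $X\cap Y$, then bound $d(r(x),r(y))$ and $|p(x)-p(y)|$ through a minimizing point $z\in X\cap Y$, exactly as the paper does with its point $x_0$. The only difference is that you also check the metric axioms (which the paper calls folklore), the rationality claim and the condition $\{p=0\}=r[X\cup Y]$, and you do all three correctly.
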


\begin{proof}
	The property that the function $d_{X\cup Y}$ is a metric is folklore. 
	Observe that for $x\in X\cap Y$ we have $r_X(x)=r_Y(y)$ and $p_X(x)=p_Y(y)$.
	Let us define $$r(x)=
	\begin{cases}
		r_X(x), &  \mbox{ if } x\in X,  \\
		r_Y(x), &  \mbox{ if } x\in Y,
	\end{cases}\quad \quad p(x)=
	\begin{cases}
		p_X(x), &  \mbox{ if } x\in X,  \\
		p_Y(x), &  \mbox{ if } x\in Y.
	\end{cases}$$ 
	To check that $r$ is a $1-$Lipschitz retraction and $p$ is a $1-$Lipschitz function, we consider only the non-trivial case when $x\in X\backslash (X\cap Y)$ and $y\in Y\backslash (X \cap Y)$. Choose $x_0\in X\cap Y$ such that $d_{X\cup Y}(x,y)=d_X(x,x_0)+d_Y(x_0,y)$, then
	\begin{align*}
		d_{X\cup Y}(r(x),r(y))&\leq d_{X\cup Y}(r(x),r(x_0))+d_{X\cup Y}(r(x_0),r(y))=d_{X}(r_X(x),r_X(x_0))+\\&+d_{Y}(r_Y(x_0),r_Y(y))\leq d_X(x,x_0)+d_Y(x_0,y)=d_{X\cup Y}(x,y)\\
		|p(x)-p(y)|&\leq |p(x)-p(x_0)+p(x_0)-p(y)|\leq  |p(x)-p(x_0)|+|p(x_0)-p(y)|=\\&=|p_X(x)-p_X(x_0)|+|p_Y(x_0)-p_Y(y)|\leq d_X(x,x_0)+d_Y(x_0,y)=\\&=d_{X\cup Y}(x,y).
	\end{align*}
	This completes the proof.
\end{proof}

\begin{lm}[\cite{d} {\bf Minimal amalgamation}]\label{minamal}
	For any finite metric spaces $(X\cup \{a\},d_{X\cup\{a\}})$, $(X\cup \{b\},d_{X\cup\{b\}})$ a function $	d_{X\cup \{a,b\}}$ defined by the formula $$d_{X\cup \{a, b\}}(x,y)=
	\begin{cases}
		d_{X\cup \{a\}}(x,y), &  \mbox{ if } x,y\in X\cup \{a\},  \\
		d_{X\cup \{b\}}(x,y), &  \mbox{ if } x,y\in X\cup \{b\},\\
		\max\{|d_{X\cup\{a\}}(x,z)-d_{X\cup\{b\}}(z,y)|: z\in X\}, & \mbox{ for } x=a, y=b
	\end{cases}$$ 
	is a metric.
	Moreover, if $d_{X\cup \{a\}}, d_{X\cup \{b\}}$ are rational, then $d_{X\cup \{a, b\}}$ is rational, too.
\end{lm}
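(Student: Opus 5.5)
We have to show that $d:=d_{X\cup\{a,b\}}$ is a metric. It is clearly symmetric and nonnegative. On each of $X\cup\{a\}$ and $X\cup\{b\}$ it agrees with a given metric, and the two given metrics coincide on $X\times X$. So the only new value is
\[
d(a,b)=\max_{z\in X}\bigl|d_{X\cup\{a\}}(a,z)-d_{X\cup\{b\}}(z,b)\bigr|.
\]
We assume $X\neq\emptyset$, as is implicit, so the maximum is over a nonempty finite set.

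If $d(a,b)=0$, then $a$ and $b$ have identical distances to every point of $X$. In that case we read the lemma, as usual, as producing a pseudometric in which $a$ and $b$ are identified. If one insists on $a\neq b$ being separated, this degenerate case must be excluded.

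The proof therefore reduces to checking the triangle inequality for the triples that involve both $a$ and $b$. For $x\in X$ there are two inequalities to verify.

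First, $d(a,b)\le d(a,x)+d(x,b)$. For each $z\in X$, the quantity $|d(a,z)-d(z,b)|$ is bounded by the sum of two terms: $|d(a,z)-d(a,x)|$ and $|d(a,x)-d(x,b)|$, together with $|d(x,b)-d(z,b)|$. Simpler still, the triangle inequality in $X\cup\{a\}$ and in $X\cup\{b\}$ gives
\[
d(a,z)\le d(a,x)+d(x,z)\quad\text{and}\quad d(z,b)\ge d(z,x)-d(x,b).
\]
Hence
\[
d(a,z)-d(z,b)\le d(a,x)+d(x,b).
\]
The symmetric argument gives $d(z,b)-d(a,z)\le d(a,x)+d(x,b)$. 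Taking the maximum over $z$ yields the claim.

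Second, $d(a,x)\le d(a,b)+d(b,x)$, and symmetrically $d(b,x)\le d(a,b)+d(a,x)$. This is immediate from the definition, since taking $z=x$ in the maximum gives $d(a,b)\ge |d(a,x)-d(x,b)|$. This is exactly where the choice of the minimal value is forced.

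Triples lying entirely in $X\cup\{a\}$ or entirely in $X\cup\{b\}$ are covered by the hypotheses. Rationality is clear, because $d(a,b)$ is a maximum of finitely many differences of rationals.

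There is no real obstacle here. The only delicate point is the degenerate case $d(a,b)=0$ noted above.
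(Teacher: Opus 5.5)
The paper does not prove this lemma; it quotes it from Doucha \cite{d}. So your argument has to stand on its own, and it does: it is correct and is the standard direct verification.

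Only triangles containing both $a$ and $b$ need checking. Taking $z=x$ in the maximum gives $d(a,x)\le d(a,b)+d(b,x)$ and $d(b,x)\le d(a,b)+d(a,x)$. The triangle inequalities in $X\cup\{a\}$ and $X\cup\{b\}$ give $d(a,b)\le d(a,x)+d(x,b)$, exactly as you wrote. The sentence before ``Simpler still'', which announces ``two terms'' and then lists three, is muddled and should be deleted. You also rightly use the hypothesis, only implicit in the statement, that the two metrics agree on $X\times X$.

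Your caveat points to a genuine flaw in the statement as printed. For $X=\{x\}$ with $d_{X\cup\{a\}}(a,x)=d_{X\cup\{b\}}(b,x)=1$, the formula gives $d(a,b)=0$. For $X=\emptyset$ it is undefined.

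A cleaner repair than identifying $a$ with $b$ follows from your own two computations. Every value of $d(a,b)$ in the interval
\[
\Bigl[\max_{z\in X}\bigl|d(a,z)-d(z,b)\bigr|,\ \min_{z\in X}\bigl(d(a,z)+d(z,b)\bigr)\Bigr]
\]
satisfies all the triangle inequalities. The right endpoint is strictly positive because $a\notin X$. So one can always choose a positive value, rational in the rational case, arbitrarily close to the left endpoint. This change is harmless where the lemma is used in the proof of Theorem~\ref{aprox}, since only an upper bound on $g_n'(b,b_n)$ matters there.
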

\begin{lm}\label{newretraction}
Let $\eps>0$, $(A, r,p)$ be a finite triple and  $i : (A, r,p)\to (\U_0, U, D )$ be an isometric embedding  such that $d_{\U_0}(U \circ i(x) , i \circ r(x))<\eps \mbox{ and }  |D \circ i(x) - p(x)|<\eps.$ There are a finite triple $(X,R,P)$  such that $A\subseteq X$, $d_{X}(R(x),r(x))<\eps \mbox{ and } |p(x)-P(x)|<\eps$ for any $x\in A$ and an isometric embedding $I : (X,R,P)\to (\U, U, D )$ such that $I\restriction A=i$ and  $$U \circ I(x) = I \circ R(x) \quad \mbox{ and } D \circ I(x) =P(x),$$  for $x\in X$.

\end{lm}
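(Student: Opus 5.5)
The plan is to take for $X$ the ``closure of $i[A]$ under $U$'', pulled back to an abstract space containing $A$, and to let everything be induced by the triple $(\U,U,D)$. Concretely, I set $Z=i[A]\cup U[i[A]]\subseteq \U$, which is a finite subset of $\U$, and define $X$ as $A$ together with new points $a^{*}$ for those $a\in A$ with $U(i(a))\notin i[A]$. Identifications are handled as follows: if $U(i(a))=i(a')$ for some $a'\in A$, I put $a^{*}:=a'$, and if $U(i(a))=U(i(a''))$ then $a^{*}=a''^{*}$. Then $I\colon X\to Z$, $I\restriction A=i$, $I(a^{*})=U(i(a))$, is a bijection. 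The metric on $X$ is defined by $d_X(x,y)=d_{\U}(I(x),I(y))$. Since $i$ is an isometric embedding, $d_X\restriction A\times A=d$, so $A\subseteq X$ isometrically and $I$ is an isometric embedding extending $i$.

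Next I define $R\colon X\to X$ by $R(a)=a^{*}$ and $R(a^{*})=a^{*}$, that is, $R=I^{-1}\circ U\circ I$. This is meaningful because $U[Z]\subseteq Z$, using $U\circ U=U$. For the same reason $R\circ R=R$. Moreover $R$ is $1$-Lipschitz, because $I$ is an isometry and $U$ is $1$-Lipschitz. I put $P=D\circ I$, which is $1$-Lipschitz since $D=\dist(\cdot,U[\U])$ is. The zero set of $P$ consists of those $x$ with $I(x)\in U[\U]$, i.e.\ $U(I(x))=I(x)$, i.e.\ $R(x)=x$. Hence $R[X]=\{P=0\}$ and $(X,R,P)$ is a finite triple. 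The identities $U\circ I=I\circ R$ and $D\circ I=P$ then hold by construction.

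The approximation estimates on $A$ follow directly from the hypothesis on $i$. Since $r(a)\in A\subseteq X$,
$$d_X(R(a),r(a))=d_{\U}(I(a^{*}),I(r(a)))=d_{\U}(U\circ i(a),\,i\circ r(a))<\eps,$$
and $|p(a)-P(a)|=|p(a)-D\circ i(a)|<\eps$.

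The only delicate point I expect is the bookkeeping of identifications. The new points $a^{*}$ must be collapsed exactly when their images under $U\circ i$ coincide with each other or with points of $i[A]$; otherwise $d_X$ would fail to be a metric (zero distances between distinct points). Defining $X$ abstractly as a copy of $Z$ containing $A$, with $I$ the identifying bijection, resolves this cleanly. Finally, the statement only asks for $I$ to land in $\U$ rather than $\U_0$, so no rationality adjustment (as in Lemma \ref{rmc}) is needed here.
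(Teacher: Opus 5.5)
Your proposal is correct and is essentially the paper's construction: $X$ is an abstract copy of $i[A]\cup U[i[A]]$ containing $A$, with the metric pulled back along $I$, $R=I^{-1}\circ U\circ I$, $P=D\circ I$, and the two estimates on $A$ read off directly from the hypotheses on $i$. You explicitly collapse new points whose images under $U\circ i$ coincide with each other or with points of $i[A]$; this is more careful than the paper, whose separate new points $d_l, x_l$ can end up at distance zero from one another or from points of $A$.
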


\begin{proof}
Fix $\eps>0$, a finite triple $(A,r,p)$ and an isometric embedding $i : A\to \U_0$ such that $d_{\U_0}(U \circ i(x) , i \circ r(x))<\eps \mbox{ and }  |D \circ i(x) - p(x)|<\eps$ for any $x\in A$.

Taking $i[A]\subseteq \U$, we define the set $$Y=i[A]\cup U[i[A]]\subseteq \U.$$  Obviously, from the assumption of isometry $i$ we have that $d_\U(i(r(a_l)), U(i(a_l)))<\eps$ and $|D(i(a))-p(a)|\leq \eps$ for any $a\in A$ and $l=1, \dots, m$. 

Let $A=\{a_1, \dots, a_m, r(a_1), \dots, r(a_m)\}$, $C=\{d_1, \dots, d_m, x_1, \dots, x_m\}$ and $X=A\cup C$.
Now we define a function $I:X\to Y$ such that $I(d_l)= U(i(a_l))$, $I(x_l)= U(i(r(a_l)))$ for $l=1, \dots, m$. 

Then we define a metric on $A\cup C$ such that $$d_{A\cup C}(x, y)=d_{\U}(I(x),I(y)) \mbox{ for any } x, y\in C, I(x),I(y)\in  U[i[A]]$$ and  $$d_{A\cup C}(a, y)=d_{\U}(i(a),I(y)) \mbox{ for any } a\in A, y\in C, I(y)\in Y\backslash i[A], i(a)\in i[A].$$ 

Note that $I:X\to Y$ is an isometric embedding; moreover, we define $1-$Lipschitz retraction $R$ as $R(a_l)=R(d_l)=d_l$, $R(r(a_l))=R(x_l)=x_l$ for any $l=1, \dots, m$ and $1-$Lipschitz function $P(x)=\mbox{dist}(I(x),U[\U])$ for any $x\in A\cup C$.  Note that $R[X]=C$ and $d_X(r(a_l),d_l)<\eps$, $d_X(r(a_l),x_l)<\eps$ and $|P(a_l)-p(a_l)|<\eps$, $|P(r(a_l))-p(r(a_l))|<\eps$. Both functions depend on $U$ and $D$, respectively. Observe that $P(d_l)=P(x_l)=0$.

Let us remind that $U$ and $D$ are $1-$Lipschitz. We show that $P$ and $R$ are $1-$Lipschitz. Indeed, for any $x,y\in X$ we have
\begin{align*}
|P(x)-P(y)|&=|D(I(x))-D(I(y))|\leq d_\U(I(x), I(y))\leq d_X(x,y),\\ 
d_X(R(x),R(y))&=d_\U(U(I(x)),U(I(y)))\leq d_\U(I(x),I(y))=d_X(x,y).
\end{align*}
Obviously $$U\circ I(x)= I\circ R(x) \mbox{ and }P(x)=D\circ I(x)$$ for any $x\in X$.
\end{proof}

\begin{lm}\label{aamal}
For any $\eps>0$ and finite triples $(C,R,P)$, $(B, r', p')$ such that $C\cap B=A$ and $B=A\cup\{b\}, b\notin A$, $r'\restriction A=r$ and $p'\restriction A=p$,  $d_C(R(a), r(a)) \leq \eps$ and $|P(a)-p(a)|\leq \eps$ for $a\in A$. Then for any metric $d$ on $C \cup B$ extending both original metrics on $C$ and $B$, there exists a metric $\rho$ on $C\cup B$, retraction $R':C\cup B \to C\cup B$ and function $P':C\cup B\to [0, \infty)$ such that $R'$ and $P'$ become $1-$Lipschitz, $|\rho \restriction (A\cup\{b\})\times(A\cup\{b\})-d\restriction (A\cup\{b\})\times(A\cup\{b\})|~<~2~\eps$, $\rho(R'(x), r'(x))\leq \eps$ and $|P'(x)-p'(x)|\leq \eps$ for any $x\in A\cup\{b\}$.
\end{lm}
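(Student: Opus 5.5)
The key observation is that the retraction $R$ on $C$ and the retraction $r'$ on $B$ need not agree on $A=C\cap B$; they only differ by at most $\eps$. The plan is to make them agree by moving $A$ slightly inside the combined metric. We then amalgamate as in the maximal amalgamation Lemma~\ref{maxamal}, which is the only place a $1-$Lipschitz retraction on $C\cup B$ can come from. Concretely, I will define $R'\restriction C=R$, and declare $R'(b)$ to be the point of $C$ that best imitates $r'(b)$:
\[
R'(b)=\begin{cases}R(r'(b)) & \text{if } r'(b)\in A,\\ b & \text{if } r'(b)=b.\end{cases}
\]
If $r'(b)=b$ then $b$ is a fixed point, and I will also need $R'(b)=b$ to be consistent with the image. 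Similarly I set $P'\restriction C=P$ and $P'(b)=p'(b)$, truncating if needed so that $P'(b)=0$ exactly when $R'(b)=b$. The estimates $\rho(R'(x),r'(x))\le\eps$ and $|P'(x)-p'(x)|\le\eps$ for $x\in A$ are then just the hypotheses. For $x=b$ they follow from $\rho(R(r'(b)),r'(b))=d_C(R(a),r(a))\le\eps$ with $a=r'(b)$.

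Next I will choose $\rho$. I keep $\rho=d$ on $C\times C$. On pairs involving $b$, I replace $d(b,\cdot)$ by a distance that makes $R'$ and $P'$ $1-$Lipschitz. The natural candidate is
\[
\rho(b,y)=\max\bigl\{d(b,y),\; d_C(R'(b),R(y)),\; |P'(b)-P(y)|\bigr\}
\]
for $y\in C$, followed by the triangle-closure, i.e.\ shortest paths through $C\cup\{b\}$. I can also simply add $2\eps$ to all distances from $b$ except those needed to stay comparable. The required inequality $d_C(R'(b),R(y))\le\rho(b,y)$ comes from
\[
d_C(R(r'(b)),R(y))\le d_C(r'(b),y)+2\eps\le d(b,y)+2\eps,
\]
using $d_C(R(a),r(a))\le\eps$ twice together with the fact that $r'$ is $1-$Lipschitz on $B$. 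The same estimate handles the $P'$ part. So setting $\rho(b,y)=d(b,y)+2\eps$ for $y\in C\setminus\{b\}$ (minus a small slack, to get the strict $<2\eps$ on $A\cup\{b\}$) should work.

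It remains to check three things. First, $\rho$ is a metric: adding a constant to all distances from a single point preserves the triangle inequality, as long as the constant does not exceed twice the minimal distance. Second, $R'$ is a $1-$Lipschitz retraction, i.e.\ $R'\circ R'=R'$ and the Lipschitz inequality holds. Third, $P'$ is $1-$Lipschitz.

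The main obstacle is the Lipschitz check for $R'$ at pairs $(b,y)$ with $y\in C\setminus A$, together with the exact $2\eps$ budget. For $y\notin A$ the bound $d(r'(b),y)\le d(b,y)$ is not available from $r'$ alone, since $r'$ is only defined on $B$. Here I would use that $d$ restricted to $C$ makes $R$ $1-$Lipschitz, and route through $r'(b)\in A$. If $d$ is arbitrary this may fail, and I would fall back on redefining $\rho(b,y)$ for $y\in C\setminus A$ via the maximal amalgamation formula $\min_{a\in A}\{\rho(b,a)+d_C(a,y)\}$. This only increases distances outside $A\cup\{b\}$, so it does not affect the stated estimate.
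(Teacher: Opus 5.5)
There is a genuine gap in the construction you actually commit to: $\rho(b,y)=d(b,y)+2\eps$ (minus a slack) for $y\in C$, with the maximal-amalgamation fallback on $C\setminus A$. Your Lipschitz check only covers the case $r'(b)\in A$. When $r'(b)=b$ you have $R'(b)=b$, and the condition at a pair $(b,y)$ is $\rho(b,R(y))\le\rho(b,y)$. Both sides are distances from $b$, so a uniform shift cancels. You would then need $d(b,R(y))\le d(b,y)$, which fails because $R(y)$ may lie $\eps$ away from $r(y)$. Concretely, take $A=C=\{a,c\}$, $B=\{a,c,b\}$, $d(a,c)=\eps$, $d(a,b)=1$, $d(b,c)=1+\eps$, $r'=\mathrm{id}_B$, $p'\equiv 0$, $R(a)=R(c)=c$, $P(a)=\eps$, $P(c)=0$. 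All hypotheses hold, yet for every shift $s$ you get $\rho(R'(b),R'(a))=\rho(b,c)=1+\eps+s>1+s=\rho(b,a)$. In the case $r'(b)=a_0\in A$ your scheme can be made to work, but the step $d_C(r'(b),y)\le d(b,y)$ is false in general. Being $1$-Lipschitz, $r'$ only gives $d(r'(b),r'(y))\le d(b,y)$ for $y\in A$, so you must route through $r(y)$.

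The repair is the ``natural candidate'' you wrote down and then abandoned, which is exactly the paper's proof: set $\rho(x,y)=\max\{d(x,y),\,d(R'(x),R'(y)),\,|P'(x)-P'(y)|\}$ for all $x,y\in C\cup B$. No triangle closure is needed, since a maximum of pseudometrics one of which is the metric $d$ is already a metric. It agrees with $d$ on $C$ because $R$ and $P$ are $1$-Lipschitz there. $P'$ is trivially $1$-Lipschitz for $\rho$, and so is $R'$. Indeed, $R'\circ R'=R'$ and $P'$ vanishes on $R'[C\cup B]$: $R'(b)=R(a_0)\in R[C]$ in one case, and $P'(b)=p'(b)=0$ in the other. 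Hence $\rho(R'(x),R'(y))=d(R'(x),R'(y))\le\rho(x,y)$. For the bound on $A\cup\{b\}$ there are three estimates. When $r'(b)=a_0\in A$, use $d(R(a),R(a_0))\le d(R(R(a)),R(r(a)))+d(R(r(a)),R(a_0))\le\eps+d(r'(a),r'(b))$. When $r'(b)=b$, use $d(b,R(a))\le d(r'(b),r'(a))+d(r(a),R(a))$. In both cases, $|P(a)-p'(b)|\le|P(a)-p(a)|+|p(a)-p'(b)|$. Each is at most $d(a,b)+\eps$, so the strict inequality $<2\eps$ holds with no slack. Incidentally, adding a nonnegative constant to the distances from one point never breaks the triangle inequality; the ``twice the minimal distance'' caveat only concerns subtraction.
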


\begin{pf}
Let $C, B$ be finite metric spaces $C\cap B=A$ and $B=A\cup\{b\}, b\notin A.$
Fix any metric $d$ defined (for instance, one can use Lemma \ref{maxamal}) on the space $C\cup B$.

We consider two cases:
\begin{itemize}
\item[1.] $r'(b)=b$ and $p'(b)=0$, then we define

$$R'(x)=
\begin{cases}
	r'(b), &  \mbox{ for } x=b,  \\
	R(x), & \mbox{ for } x\in C,
\end{cases}
 \mbox{ and }
 P'(x)=
\begin{cases}
	p'(b), &  \mbox{ for } x=b,  \\
	P(x), & \mbox{ for } x\in C.
\end{cases}$$ 

\item[2.] $r'(b)=a_0$, $a_0\in r[A]$ and $p'(b)\neq 0$, then we define

$$R'(x)=
\begin{cases}
	c_0, &  \mbox{ for } x=b, \mbox{ where } c_0=R(r'(b))=R(r(a_0)), a_0\in A \mbox{ and } R(c_0)=c_0\\
	R(x), & \mbox{ for } x\in C.
\end{cases}$$
 and
 $$P'(x)=
\begin{cases}
	p'(b), &  \mbox{ for } x=b\\
	P(x), & \mbox{ for } x\in C.
\end{cases}$$ 
\end{itemize}

Obviously, in both cases $d(R'(b), r'(b))\leq\eps$ and $|P'(b)-p'(b)|\leq \eps$.

Now we define a metric on $C\cup B$ as $$\rho(x,y)=\max\{d(x,y), d(R'(x),R'(y)), |P'(x)-P'(y)|\}$$ for any $x,y \in C\cup B.$
Then $R'$ and $P'$ become $1-$Lipschitz. 

Moreover, $\rho \restriction A\cup \{b\} < d+2\eps$. We consider only the case when $r'(b)\neq b$. Indeed, fix $a\in B$,
then \begin{align*}d(R'(a),R'(b))&=d(R(a),c_0)<d(R(a),r(a))+d(r(a),r'(b))+d(r'(b),c_0)\\&<2\eps+ d_{A\cup \{b\}}(a,b)
	\end{align*}
and 
\begin{align*}|P'(a)-P'(b)|&=|P(a)-p'(b)|=|P(a)-p(a)+p(a)-p'(b)|\\&<|P(a)-p(a)|+|p(a)-p'(b)|<\eps +d_{A\cup \{b\}}(a,b).	\end{align*}
\end{pf}

\begin{prop}\label{blisko}
Suppose that $X=\bigcup_{i=1}^n X_i$ while $X_i=\{x_i^{(j)} \colon j=1,\ldots,m\}$ for any $i=1,\ldots,n+1.$  Let $\varepsilon>0$ and $((X,d_n),r_n,p_n), \ ((X_{n+1},d),r,p)$ be finite triples such that $|X_i|=|X_{n+1}|$ for $i\in\{1,\dots, n\}$ and $|d_n(x_1^{(j)},x_1^{(k)})-d(x_{n+1}^{(k)},x_{n+1}^{(j)})| \le \eps$ for $j,k\in\{1,\dots, m\}$, $r_n\restriction X_i$ is a retraction for any $i=1,\ldots,n.$ . If $r_n(x_i^{(j)})=x_i^{(k)}$, then $r(x_{n+1}^{(j)})=x_{n+1}^{(k)}$ and $|p_n(x_i^{(j)})-p(x_{n+1}^{(j)}|<\eps$. There is a metric $d_{n+1}$ on $Y=X \cup X_{n+1}$ extending $d_n$ and $d$, a $1-$Lipschitz retraction $r_{n+1}$ defined on $Y$ extending $r_n$ and $r$ and a $1-$Lipschitz function $p_{n+1}$ defined on $Y$ extending both $p_n$ and $p$. Moreover, we can define $d_{n+1}$ in such a way that $d_{n+1}(x_1^{(j)},x_{n+1}^{(j)})=\varepsilon$ for all $1 \leq j \leq m$. 

\end{prop}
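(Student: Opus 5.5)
We will build $d_{n+1}$ as a one-sided amalgamation: $X_{n+1}$ is glued to $X$ through the block $X_1$ only, with a bridge of length $\eps$ between corresponding points. Concretely, for $x\in X$ and $x_{n+1}^{(k)}\in X_{n+1}$ we put
$$d_{n+1}(x,x_{n+1}^{(k)})=\min_{1\le j\le m}\bigl\{d_n(x,x_1^{(j)})+\eps+d(x_{n+1}^{(j)},x_{n+1}^{(k)})\bigr\},$$
and we let $d_{n+1}$ agree with $d_n$ on $X\times X$ and with $d$ on $X_{n+1}\times X_{n+1}$.

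This is the maximal amalgamation of Lemma \ref{maxamal} in disguise. Let $Z=X_1\cup X_{n+1}$ carry the metric $e$ that equals $d_n$ on $X_1$, equals $d$ on $X_{n+1}$, and is given by
$$e(x_1^{(j)},x_{n+1}^{(k)})=\min_l\{d_n(x_1^{(j)},x_1^{(l)})+\eps+d(x_{n+1}^{(l)},x_{n+1}^{(k)})\}.$$
Then $d_{n+1}$ is exactly the maximal amalgamation of $(X,d_n)$ and $(Z,e)$ over $X_1$.

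The first step is to check that $e$ is a metric. The triangle inequalities of the type $d(x_{n+1}^{(j)},x_{n+1}^{(k)})\le e(x_{n+1}^{(j)},x_1^{(l)})+e(x_1^{(l)},x_{n+1}^{(k)})$ use the hypothesis $|d_n(x_1^{(j)},x_1^{(k)})-d(x_{n+1}^{(j)},x_{n+1}^{(k)})|\le\eps$. A path through $X_1$ costs at least $2\eps$ in bridges, and this absorbs the discrepancy between the two metrics. The remaining triangle inequalities are routine, since the formula is a shortest-path metric.

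Once $e$ is a metric, the first part of Lemma \ref{maxamal} gives that $d_{n+1}$ is a metric extending $d_n$ and $d$. It also gives the "moreover" part. Indeed, $d_{n+1}(x_1^{(j)},x_{n+1}^{(j)})\le\eps$ from the $l=j$ term. The same hypothesis shows the other terms are not smaller: $d_n(x_1^{(j)},x_1^{(l)})+\eps+d(x_{n+1}^{(l)},x_{n+1}^{(j)})\ge\eps$ trivially. So equality $=\eps$ holds, provided the points are distinct, which we assume since $X_{n+1}$ is a new block.

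Next we define $r_{n+1}=r_n\cup r$ and $p_{n+1}=p_n\cup p$; these are well defined since $X\cap X_{n+1}=\emptyset$. By the final part of Lemma \ref{maxamal}, applied to $(X,d_n)$ and $(Z,e)$, it suffices to show that $r_1\cup r$ and $p_1\cup p$ are $1$-Lipschitz on $(Z,e)$. Here $r_1=r_n\restriction X_1$ is a retraction of $X_1$ by hypothesis.

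For $r$ it is enough to check a single bridge $x_1^{(j)}\to x_{n+1}^{(j)}$, because the Lipschitz property propagates along the minimizing path. By the compatibility hypothesis, $r_n(x_1^{(j)})=x_1^{(k)}$ and $r(x_{n+1}^{(j)})=x_{n+1}^{(k)}$, so the images also differ by a bridge of length $\eps$:
$$e(r_1(x_1^{(j)}),r(x_{n+1}^{(j)}))\le\eps=e(x_1^{(j)},x_{n+1}^{(j)}).$$
For $p$ we have $|p_n(x_1^{(j)})-p(x_{n+1}^{(j)})|<\eps$, so again each bridge is respected. Summing along the path gives the $1$-Lipschitz bound for both maps.

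The main obstacle is verifying that $e$ is a metric, and in particular that the bridges have length exactly $\eps$ rather than something shorter. This is the only place where the closeness hypothesis on $d_n\restriction X_1$ versus $d$ enters. I would handle it by writing every inequality as a comparison of path lengths, each of which contains at least two bridges whenever the discrepancy term appears.
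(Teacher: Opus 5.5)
Your argument is correct and is essentially the paper's: the same bridge metric on $X_1\cup X_{n+1}$ (the paper's $d'$, your $e$) is glued to $(X,d_n)$ by the maximal amalgamation of Lemma~\ref{maxamal} over $X_1$, with $r_{n+1}$ and $p_{n+1}$ defined as unions. The one difference is that the paper introduces $\rho(x,y)=\max\{d'(x,y),d'(r'_{n+1}(x),r'_{n+1}(y)),|p'_{n+1}(x)-p'_{n+1}(y)|\}$ to force the $1$-Lipschitz property (and then amalgamates with $d'$ anyway), whereas your bridges-to-bridges check shows directly that $r_1\cup r$ and $p_1\cup p$ are already $1$-Lipschitz for $d'$, which is exactly the justification the paper leaves out.
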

\begin{proof}
Using [\cite{P} Example 56.] define metric $d'$ on $X_1\cup X_{n+1}$ as
$$d'(x_{n+1}^{(j)},x_1^{(j)})=d'(x_1^{(j)},x_{n+1}^{(j)})=\eps  \ \textrm{ for any }  \ 1 \leq j \leq m,$$
$$d'(x_i^{(j)},x_{n+1}^{(k)})=d'(x_{n+1}^{(k)},x_{i}^{(j)})=\varepsilon+\min_{1 \leq p \leq m}\{d_n(x_1^{(j)},x_1^{(p)}) +d(x_{n+1}^{(p)},x_{n+1}^{(k)})\}$$
$$ \textrm{for any } 1 \leq j,k \leq m,$$
and $d' \restriction{X_1 \times X_1}=d_n, \ d' \restriction{X_{n+1} \times X_{n+1}}=d.$  Let us define functions $r'_{n+1}$ and $p'_{n+1}$ as follows:
$$r'_{n+1}(x)=
\begin{cases}
	r_n(x), &  \mbox{ for } x\in X_1,  \\
	r(x), & \mbox{ for } x\in X_{n+1}
\end{cases}$$
 and
 $$p'_{n+1}(x)=
\begin{cases}
	p_n(x), &  \mbox{ for } x\in X_1,  \\
	p(x), & \mbox{ for } x\in X_{n+1}.
\end{cases}$$ 
Now we define a metric on $X\cup X_1$ as 
$$\rho(x,y)=\max\{d'(x,y), d'(r'_{n+1}(x),r'_{n+1}(y)), |p'_{n+1}(x)-p'_{n+1}(y)|\}.$$ 
Then the functions $r'_{n+1}$ and $p'_{n+1}$ are $1-$Lipschitz.
Now we use Lemma \ref{maxamal} for the finite triples $((X_1\cup X_{n+1},d'),r'_{n+1},p'_{n+1})$ and $((X,d_n),r_n,p_n)$ to define a metric space $(Y, d_{n+1}).$  Let us define functions $r_{n+1}$ and $p_{n+1}$ as follows:
$$r_{n+1}(x)=
\begin{cases}
	r'_{n+1}(x), &  \mbox{ for } x\in X_1\cup X_{n+1},  \\
	r_n(x), & \mbox{ for } x\in X
\end{cases}$$
 and
 $$p_{n+1}(x)=
\begin{cases}
	p'_{n+1}(x), &  \mbox{ for } x\in X_1\cup X_{n+1},  \\
	p_n(x), & \mbox{ for } x\in X.
\end{cases}$$

\end{proof}




\begin{tw} \label{aprox}
Let $U:\U \to U[\U]$ be a $1-$Lipschitz retraction. The following conditions are equivalent:
\begin{enumerate}
\item[(UR)] \label{UR1}
for every finite triple $(A, r,p)$ and every isometric embedding $i : A\to (\U, U, D )$ such that $U \circ i(x) = i \circ r(x)$ and $D \circ i(x) = p(x)$, and every $B=A\cup \{b\}, b\notin A$, a triple $(B,r',p')$ such that $r'\restriction A=r$, $p'\restriction A=p$, there exists an extension $i':B\to (\U, U, D)$ such that for every $x\in B$ we have $$U \circ i'(x) = i' \circ r'(x)\quad \mbox{ and } \quad D \circ i'(x) = p'(x),$$
\item[(UR$^{\star}$)] \label{UR2}
for every $\eps >0$, any rational finite triple $(A, r,p)$ every rational isometric embedding $i : A\to (\U_0,U, D )$ such that $d_{\U_0}(U \circ i(x), i \circ r(x))<\eps$ and $|D \circ i(x) - p(x)|<\eps$, and every rational triple $(B,r',p')$, where $B=A\cup \{b\}, b\notin A$, $r'\restriction A=r$, $p'\restriction A=p$, there exists a rational isometric embedding {$i':B\to (\U_0, U, D)$} such that for every $x\in A$: {$d_{\U}(i(x),i'(x))< 2\eps$} and for every $x\in B$ we have {$d_{\U}(U \circ i'(x), i' \circ r'(x))<5\eps \quad \mbox{ and } \quad |D \circ i'(x) - p'(x)|<3\eps.$}
\end{enumerate}
\end{tw}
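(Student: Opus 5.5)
We prove the two implications separately. The direction (UR)$\Rightarrow$(UR$^{\star}$) is an approximation argument assembled from Lemmas \ref{newretraction}, \ref{aamal}, \ref{rmc} and \ref{fpb}. The direction (UR$^{\star}$)$\Rightarrow$(UR) is a back-and-forth/Cauchy sequence construction that turns approximate one-point extensions into an exact one.

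\textbf{(UR)$\Rightarrow$(UR$^{\star}$).}

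\emph{Step 1: exact triple near $i$.} Fix $\eps$, the rational triples $(A,r,p)\subseteq(B,r',p')$ and the rational embedding $i$ with the $\eps$-approximate commutation. Apply Lemma \ref{newretraction} to obtain a finite triple $(X,R,P)\supseteq A$ and an embedding $I\colon X\to\U$ extending $i$. This $I$ intertwines exactly: $U\circ I=I\circ R$ and $D\circ I=P$. Moreover $R$, $P$ are $\eps$-close to $r$, $p$ on $A$.

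\emph{Step 2: amalgamate with $B$.} Put $C=X$. Amalgamate $C$ and $B$ over $A$ via Lemma \ref{maxamal}, then correct the retraction and distance function by Lemma \ref{aamal}. This gives a triple $(C\cup B,\rho,R',P')$ with $R'$, $P'$ being $1$-Lipschitz. The metric $\rho$ agrees with the original on $C$ (only the $b$-row changes, by $<2\eps$), and $R'(b)$, $P'(b)$ are $\eps$-close to $r'(b)$, $p'(b)$.

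\emph{Step 3: apply (UR) and return to $\U_0$.} Since $I$ is an exact embedding of $(C,R,P)$, (UR) applied once (for the new point $b$) yields an exact embedding $J\colon C\cup B\to\U$ extending $I$. Now $J\restriction B$ is an isometry of $(B,\rho)$, which differs from the rational metric of $B$ by less than $2\eps$. Lemma \ref{fpb}, with Lemma \ref{rmc} used to keep everything rational, then moves $J\restriction B$ to a rational embedding $i'\colon B\to\U_0$ with $d_\U(i'(x),J(x))<2\eps$; on $A$ we have $J=i$, which gives the first required estimate.

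\emph{Step 4: bookkeeping.} The remaining estimates follow from the triangle inequality and the $1$-Lipschitz property of $U$ and $D$:
\[
d(U i'(x),\,i' r'(x))\le d(Ui'(x),UJ(x))+d(J R'(x),J r'(x))+d(J r'(x),i' r'(x)),
\]
which is bounded by roughly $2\eps+\eps+2\eps=5\eps$, and
\[
|D i'(x)-p'(x)|\le 2\eps+\eps=3\eps .
\]
Some slack may be needed: the constants $2,5,3$ suggest one should run Lemma \ref{fpb} with a slightly smaller $\eps$.

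\textbf{(UR$^{\star}$)$\Rightarrow$(UR).} This is the main obstacle. Given an exact embedding $i\colon A\to\U$ and a one-point extension $B=A\cup\{b\}$, we must produce an exact $i'$.

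\emph{Step 1: Cauchy sequence of approximate embeddings.} Choose $\eps_n=2^{-n}\eps_0$. Use Lemmas \ref{rmc} and \ref{fpb} to replace $(A,r,p)$, $(B,r',p')$ and $i$ by rational data $(A_n,B_n,i_n)$ that are $\eps_n$-close to the original. Apply (UR$^{\star}$) to obtain $i_n'\colon B_n\to\U_0$ satisfying the estimates with $5\eps_n$ and $3\eps_n$.

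\emph{Step 2: keep consecutive embeddings close.} We need $i'_{n+1}$ to be close to $i'_n$ on all of $B$, not only on $A$. To force this, at stage $n+1$ the step-$n$ image is treated as the base: apply (UR$^{\star}$) to a doubled configuration built with Proposition \ref{blisko}. That proposition glues the copy $B_n$ to the new copy at distance $\eps_n$ and extends $r$, $p$. Consequently $d(i'_n(x),i'_{n+1}(x))\lesssim\eps_n$, so the sequence is Cauchy.

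\emph{Step 3: pass to the limit.} The limit $i'=\lim i'_n$ exists in the complete space $\U$. It is an isometry of $B$ extending $i$, since the errors on $A$ are summable and tend to $0$. By continuity of $U$ and $D$, it satisfies $U\circ i'=i'\circ r'$ and $D\circ i'=p'$ exactly.

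The delicate part is Step 2: arranging the inductive application so that the errors do not accumulate (the factor $2\eps$ per step must be summable). This is exactly what Proposition \ref{blisko} is designed to provide.
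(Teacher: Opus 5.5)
Your proof of (UR)$\Rightarrow$(UR$^{\star}$) is the paper's argument: Lemma \ref{newretraction}, then Lemmas \ref{maxamal} and \ref{aamal}, one application of (UR), Lemma \ref{fpb}, and the same $2\eps+\eps+2\eps$ and $2\eps+\eps$ bookkeeping. Lemma \ref{rmc} is not needed there, since $d_B$ is already rational.

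The gap is in Step 2 of (UR$^{\star}$)$\Rightarrow$(UR). A one-point extension controls the new point only through its distances to the base. So to force $i'_{n+1}(b_{n+1})$ close to $i'_n(b_n)$, you must put $b_n$, with its current image $i'_n(b_n)$, into the base of an application of (UR$^{\star}$). But (UR$^{\star}$) only accepts a base that is $\eps$-commuting at every base point, and it returns an embedding that is merely $5\eps$-commuting. Since $i'_n$ is only $5\eps_n$-commuting at $b_n$, you need tolerance at least $5\eps_n$. Then $b_{n+1}$ is only $25\eps_n$-commuting, the next stage needs tolerance at least $25\eps_n$, and so on. The error at $b_n$ is multiplied by $5$ at each stage no matter how fast $\eps_n\to0$, and adding a whole doubled copy of $B$ point by point makes it worse. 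Proposition \ref{blisko} does not help. It only produces an abstract metric on two copies with the same retraction pattern; in the paper it is used merely to see that $B$ lies in the closure of $\bigcup_n B_n$. It does nothing to the commutation error of the embedding you feed to (UR$^{\star}$). So neither $d(i'_n(x),i'_{n+1}(x))=O(\eps_n)$ nor the exactness of the limit in your Step 3 is justified.

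What is missing is an \emph{error reset} at every stage, which is the heart of the paper's proof. One applies Lemma \ref{newretraction} to $i'_n$:
\begin{itemize}
\item adjoin to $B_n$ a set $C_n$ mapped onto the true images $U(i'_n(x))$;
\item replace $r'_n,p'_n$ on $B_n$ by the retraction into $C_n$ and the true values of $D$.
\end{itemize}
Together with the exact $i$ on $A$, this gives an \emph{exactly} commuting embedding $I_n$ of $A\cup B_n\cup C_n$.

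The price is that the abstract retraction on $B_n$ is no longer a copy of $r'$. One must therefore check by hand that a new copy of $b$ can be placed at distance $O(\eps_n)$ from $b_n$ with the retraction and $p$ still $1$-Lipschitz. The paper does this by maximal amalgamation of $B$ with $A\cup A_n\cup C_n$, then minimal amalgamation with $A\cup B_n\cup C_n$, with the estimates $g'_n(b,b_n)<\frac{25}{2}\eps_n$ and $\rho_n(b,b_n)<\frac{35}{2}\eps_n$.

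Only then is the configuration rationalized (Lemma \ref{rmc}) and re-embedded within $\frac{\eps_{n+1}}{2}$ of $I_n$ (Lemma \ref{fpb}). The base, which contains $b_n$, is then $\eps_{n+1}$-commuting, and (UR$^{\star}$) is applied once to add $b_{n+1}$. The commutation errors stay of order $\eps_{n+1}$ instead of compounding, while $d(i'_{n+1}(b_{n+1}),i'_n(b_n))=O(\eps_n)$. This is what makes the sequence Cauchy and the limit exactly commuting.
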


\begin{pf}
{
(UR$^{\star}$) $\Rightarrow$  (UR)
\textbf{Preliminaries:}

Fix a finite triple $((A,d),r, p)$, isometric embedding $i \colon A\to \U$ such that $U \circ i(a) = i \circ r(a)$ and $D \circ i(a) = p(a)$ for any $a\in A$, and a triple $((B,d_B),r',p')$, $B=A\cup \{b\}, b\notin A$, $B=\{x^{(j)} \colon j=1,\ldots,m+1\}$, with $x^{(m+1)}=b$, where  $d_B \restriction A \times A=d,$ $r'\restriction A=r$, $p'\restriction A=p$.

Fix $\eps>0$ and a sequence $\{\eps_n \}_{n\in \omega}$ of decreasing positive numbers such that $21\sum_{n\in \omega} \eps_n<\eps$.
We inductively construct a sequence of rational isometries $\{j'_n\}_{n\in \omega}$ and rational triples $((B_n,d_{n}),r_{n},p_{n})\}_{n\in \omega}$, $B_n=A_n\cup \{b_n\}$, $|A|=|A_n|=m,b_n\notin A_n$ $B_n=\{x_n^{(j)} \colon j=1,\ldots,m+1\}$ with $x_n^{(m+1)}=b_n$ such that:
\begin{enumerate}[(i)]
	\item $|d_n (x_n^{(j)}, x_n^{(j)})-d_B(x^{(j)}, x^{(j)})|<\frac{\eps_n}{2},$ for any $i,j=1,\cdots,m+1$,
	\item $j'_n \colon B_n \to \U_0,$	
	\item  if $r'(x^{(j)})=x^{(i)}$ for some $i,j=1,\cdots,m+1$, then $r'_{n}(x_{n}^{(j)})=x_{n}^{(i)},$ 
	\item  $|p'_{n}(x_{n}^{(j)})-p'(x^{(j)})| \leq \frac{\eps_n}{2}$ for any $j=1,\ldots,m+1$,
	\item $ d_{\U}(j'_n(x_{n}^{(j)}),i(x^{(j)})) <\frac{5\eps_n}{2}$ for any $j=1,\ldots,m,$
	\item $d_\U(U\circ j'_n(x_n^{(j)}),  j'_n\circ r'_n(x_n^{(j)}))<5\eps_n$  and $|D\circ j'_n(x_n^{(j)})-P'_n(x_n^{(j)})|<3\eps_n$, for any $j=1,\cdots,m+1$
	\item $\{j'_n(x^{(j)}_{n})\}_{n\in \omega}$ is Cauchy sequence for any $j=1,\ldots,m+1$.
\end{enumerate} 
First step: Using Lemma \ref{rmc} for space $B$, for $n=0$ we find rational metrics $d_0$ (denote the rational space $B_{0}$) such that $|d_0 (x_0^{(j)}, x_0^{(l)})-d_B(x^{(j)}, x^{(l)})|<\frac{\eps_0}{2}$, $p_{0}'$ is rational and $|p'(x^{(j)})-p'_0(x_0^{(j)})|<\frac{\eps_0}{2}$, $r_{0}' (x_0^{(j)}):=r(x^{(j)})$ is a retraction and both are $1-$Lipschitz. Then $r_{0}:=r'_{0}\restriction A_{0}$ and $p_{0}:=p'_{0}\restriction A_{0}$.	
	
Using Lemma \ref{fpb} we find isometry $j_{0}: (A_{0},d_{0})\to \U_0$ such that 

$d_\U(i(a^{(j)}),j_{0}(a_{0}^{(j)})) <\frac{\eps_0}{2} $. Then
	\begin{align*}d_{\U}(U \circ j_{0}(a_{0}^{(j)}),j_{0} \circ r_{0}(a_{0}^{(j)}))&\leq d_{\U}(U \circ j_{0}(a_{0}^{(j)}),U \circ i(a^{(j)}))+d_{\U}(U \circ i(a^{(j)}), i \circ r(a^{(j)}))\\&+d_{\U}(i \circ r(a^{(j)}),j_{0} \circ r_{0}(a_{0}^{(j)}))<\frac{\eps_0}{2} +\frac{\eps_0}{2}<\eps_0	\end{align*}
	and
	\begin{align*}	|D \circ j_{0}(a_{0}^{(j)}) - p_{0}(a_{0}^{(j)})|&\leq|D \circ j_{0}(a_{0}^{(j)}) -D \circ i(a^{(j)})| +|D \circ i(a^{(j)})-p(a^{(j)})|\\&+|p(a^{(j)})-p_{0}(a_{0}^{(j)})|<d_\U(j_{0}(a_{0}^{(j)}),i(a^{(j)}))+\frac{\eps_0}{2} <\eps_0
	\end{align*}
	for $a^{(j)}\in A, a_{0}^{(j)}\in A_{0}$.

Using condition (UR$^{\star}$) we get an isometry $j'_0 \colon (B_0, d_0) \to \U_0$ such that $j'_0$ is a $5\eps_0$ commuting with $U$ and $3\eps_0$ commuting with $D$ and $d_{\U_0}(j_0(a_{0}^{(j)}), j'_0(a_{0}^{(j)}))<2\eps_0$, especially $d_{\U_0}(i(a^{(j)}), j'_0(a_{0}^{(j)}))<\frac{5\eps_0}{2}$, for $a^{(j)}\in A, a_{0}^{(j)}\in A_{0}$

Using Proposition \ref {blisko} for spaces $B$ and $B_0$ we get metric $\rho'_0$ defined on $B\cup B_0$ such that $\rho'_0(x^{(k)}, x_0^{(k)})<\frac{\eps_0}{2}$, $x^{(k)}\in B, x_0^{(k)}\in B_0$,  $1-$Lipschitz retraction $\bar{R}_0$ and $1-$Lipschitz function $\bar{P}_0$ such that $\bar{R}_0\restriction B=r'$, $\bar{R}_0\restriction B_0=r'_n$ and $\bar{P}_0\restriction B=p'$, $\bar{P}_0\restriction B_0=p'_0$.

\textbf{Construction of $\rho'$ and $j'$:}

Induction: Suppose we defined a metric $\rho_n'$  on the set  the set $B\cup \bigcup_{k=0}^{n} B_k$  such that $\rho_n'(x^{(j)},x_{n}^{(j)})<\frac{\eps_n}{2}$ for $x^{(j)} \in B$, $x_{n}^{(j)}\in B_n$, $j=1, \dots, m+1$, an isometry $j_n' \colon (B_n, d_n) \to \U_0$ such that $d_\U(U\circ j'_n(x_n^{(j)}),  j'_n\circ r'_n(x_n^{(j)}))<5\eps_n$, $|D\circ j'_n(x_n^{(j)})-p'_n(x_n^{(j)})|<3\eps_n$, $j=1,\cdots,m+1$ and $ d_{\U}(j'_n(a_{n}^{(j)}),i(a^{(j)})) \leq \frac{5 \eps_n}{2}$ for $a^{(j)}\in A, a_n^{(j)}\in A_n$, $j=1, \dots, m$.

Using Lemma \ref{newretraction} for the triple $(B_n, r'_n, p'_n)$ and the isometry $j_n':  (B_n, d_n) \to \U_0$  we obtain commutative isometry $j''_n:  (B_n\cup C_{n},d'_n)\to (j_n'[B_n]\cup U[j_{n}'[B_n]], d_\U)$ and $1-$Lipschitz retraction $r''_n$ and $1-$Lipschitz function $p''_n$ such that 
$d'_n~(r'_n(x_n^{(j)})~, r''_n(x_n^{(j)}))~<~5\eps_n$ and $|p'_n(x_n^{(j)})-p''(x_n^{(j)})|<3\eps_n$ for any $x_n^{(j)}\in B_n$, $d'_n\restriction B_n^2 =d_n$, $j=1,\dots, m+1$ (it may happen that $j'_n$ commutes with $U$ and $D$, then $C_n=\emptyset$ and all considerations are simpler.)

Taking into account commutative isometry $i:(A,r,p)\to (\U,U,D)$ we get the metric $f_n$ on the set $A \cup B_n\cup C_{n}$  and commutative isometry $I_n: (A \cup B_n\cup C_{n},f_n)\to i[A]\cup  j_n'[B_n]\cup U[j_{n}'[B_n]]$, and $1-$Lipschitz retraction $R_n$ and $1-$Lipschitz function $P_n$ determined by $U$ and $D$, such that $P_n\restriction (B_n\cup C_n)=p''_n$, $R_n\restriction (B_n\cup C_n)=r''_n$, $P_n\restriction A=p$ and $R_n\restriction A=r$, $f_n(a^{(j)},a_n^{(j)})<\frac{5\eps_n}{2}$ and again $f_n\restriction B_n^2 =d_n$. Denote $e_n=f_n \restriction (A \cup A_n \cup C_n) \times (A \cup A_n \cup C_n).$

Using Lemma \ref{maxamal} for $(B, d_B)$ and $((A \cup A_n \cup C_n),e_n)$ we define metric $g_n$ on $B \cup A_n\cup C_n$ with suitable $1-$Lipschitz retraction $R'_n$ and $1-$Lipschitz function $P'_n$ such that $P'_n\restriction (A \cup A_{n}\cup C_{n})=P_n\restriction (A \cup A_{n}\cup C_{n})$, $R'_n\restriction (A\cup A_n\cup C_n)=R_n\restriction (A \cup A_{n}\cup C_{n})$, $P'_n\restriction B=p'$ and $R'_n\restriction B=r'$.

Using Lemma \ref{minamal} for $(B \cup A_n\cup  C_n,g_n)$ and $(A\cup B_n\cup  C_n,f_n)$ we define metric $g_n'$ on $B \cup B_n\cup C_n$ extending $g_n$ and $f_n$, respectively, we still have the same as before retraction $\bar{r}$ and function $\bar{p}$, extending $R'_n$, $R_n$ and $P'_n, P_n$

\begin{claim} $g_n'(b,b_n)<\frac{25\eps_n}{2}$.
\end{claim}

\begin{proof}
Let us remain that $g_n'(b,b_n)=\max\{|g_n(b,x)-f_n(x,b_n)|:x\in A\cup A_n \cup C_n\}$.
Let $x^{(j)}$ be such a point that $g_n'(b,b_n)=|g_n(b,x^{(j)})-f_n(x^{(j)},b_n)|$.
We consider three cases:
\begin{itemize}
\item[1)] $x=x^{(j)}\in A$, then $g_n(b,x^{(j)})=d_B(b,x^{(j)})$ and $f_n(x_n^{(j)}, x^{(j)})\leq \frac{5\eps_n}{2}$, then $f_n(x^{(j)},b_n)\leq f_n(x^{(j)}, x_n^{(j)})+ f_n(x_n^{(j)},b_n)\leq \frac{5\eps_n}{2}+d_n(x_n^{(j)},b_n)$ and $d_n(x_n^{(j)},b_n)= f_n(x_n^{(j)},b_n)\leq f_n(x_n^{(j)}, x^{(j)})+ f_n(x^{(j)},b_n) \leq \frac{5\eps_n}{2}+f_n(x^{(j)},b_n)$, this implies that $d_n(x_n^{(j)},b_n)-\frac{5\eps_n}{2}\leq f_n(x^{(j)},b_n)\leq\frac{5\eps_n}{2}+d_n(x_n^{(j)},b_n)$. Then

$|g_n(b,x^{(j)})-f_n(x^{(j)},b_n)|<3 \epsilon_n$
since $$g_n(b,x^{(j)})-f_n(x^{(j)},b_n)<d_B(b,x^{(j)})-d_n(x_n^{(j)},b_n)+\frac{5\eps_n}{2}<\frac{5\eps_n}{2}$$ and 
$$f_n(x^{(j)},b_n)-g_n(b,x^{(j)})\leq d_n(x_n^{(j)},b_n)+\frac{5\eps_n}{2}-d_B(b,x^{(j)})<3\eps_n.$$
\item[2)] $x=x_n^{(j)}\in A_n$, then $f_n(b_n,x_n^{(j)})=d_n(b_n,x_n^{(j)})$ and $g_n(x_n^{(j)}, x^{(j)})=f_n(x_n^{(j)}, x^{(j)})\leq \frac{5\eps_n}{2}$, the we get $d_B(x^{(j)},b)-\frac{5\eps_n}{2}\leq g_n(x^{(j)},b)\leq\frac{5\eps_n}{2}+d_B(x^{(j)},b)$. Then

$|g_n(b,x^{(j)})-f_n(x^{(j)},b_n)|=\begin{cases} g_n(b,x^{(j)})-f_n(x^{(j)},b_n)\leq\frac{5\eps_n}{2}\\
f_n(x^{(j)},b_n)-g_n(b,x^{(j)})\leq 3\eps_n.\end{cases}$
\item[3)] $x\in C_n$, $x=I_n^{-1}(U(j_n'(x_n^{(j)})))$, observe that $$f_n(r'_n(x_n^{(j)}),x)=f_n(r'_n(x_n^{(j)}),I_n^{-1}(U(j_n'(x_n^{(j)}))))=d_\U(I_n(r'_n(x_n^{(j)})),U(j_n'(x_n^{(j)})))=$$ $$=d_\U(j''_n(r'_n(x_n^{(j)})),U(j''_n(x_n^{(j)})))=d_\U(j''_n(r'_n(x_n^{(j)})),j''_n(r''_n(x_n^{(j)})))=
$$ $$=d'_n(r'_n(x_n^{(j)}),r''_n(x_n^{(j)}))<5\eps_n$$ and $g_n(r'_n(x_n^{(j)}), x)=g_n((r'_n(x_n^{(j)}),I_n^{-1}(U(j_n'(x_n^{(j)}))))=d_\U(I_n(r'_n(x_n^{(j)})),U(j_n'(x_n^{(j)})))<5\eps_n$,

then $d_n(b_n,r'_n(x_n^{(j)}))-5\eps_n\leq f_n(b_n,x)\leq d_n(b_n,r'_n(x_n^{(j)}))+5\eps_n$, where $r'_n(x_n^{(j)})\in A_n$, moreover $f_n(x_n^{(j)}, x^{(j)})\leq \frac{5\eps_n}{2}$ and $d_B(r(x^{(j)}),b)-\frac{5\eps_n}{2}-5\eps_n\leq g_n(x,b)\leq\frac{5\eps_n}{2}+5\eps_n+d_B(r(x^{(j)}),b)$. We show only one inequality, the second one is similar:
$d_B(r(x^{(j)}),b)=g_n(r(x^{(j)}),b)\leq g_n(r(x^{(j)}),r_n(x_n^{(j)}))+g_n(r_n(x_n^{(j)}),x)+g_n(x,b)\leq\frac{5\eps_n}{2}+5\eps_n+g_n(x,b)$. Then

$|g_n(b,x)-f_n(x,b_n)|= \begin{cases}g_n(b,x)-f_n(x,b_n)\leq \frac{25\eps_n}{2}\\
f_n(x,b_n)-g_n(b,x)\leq \frac{25\eps_n}{2}.\end{cases}$
\end{itemize}
\end{proof}

\begin{claim} \label{cl1}
There exists a metric $\rho_n$ on $B \cup B_n\cup C_n$ such that:
\begin{itemize}
\item[1)] $\rho_n \restriction [A \cup B_n\cup C_n]^2=g'_n\restriction [A\cup B_n\cup C_n]^2=f_n$,
\item[2)]  $\rho_n \restriction [B \cup A_n\cup C_n]^2=g'_n\restriction [B \cup A_n\cup C_n]^2=g_n$,
\item[3)] $\bar{r}:B \cup B_n\cup C_n\to B \cup B_n\cup C_n$ define as $\bar{r}(x)=
\begin{cases}
	R'_n(x), &  \mbox{ if } x\in B\cup A_n\cup C_n,  \\
	R_n(x), &  \mbox{ if } x\in A\cup B_n\cup C_n,
\end{cases}$ is a $1-$Lipschitz retraction,
\item[4)] $\bar{p}:B \cup B_n\cup C_n\to [0,\infty)$ define as $\bar{p}(x)=
\begin{cases}
	P'_n(x), &  \mbox{ if } x\in B\cup A_n\cup C_n,  \\
	P_n(x), &  \mbox{ if } x\in A\cup B_n\cup C_n,
\end{cases}$ is a $1-$Lipschitz function,
\item[5)]  $\rho_n(b, b_n)<\frac{35\eps_n}{2}$.
\end{itemize}
\end{claim}

\begin{proof}
Let us define $$\rho_n(x, y)=\max\{g'_n(x,y), g'_n(\bar{r}(x),\bar{r}(y)), |\bar{p}(x)-\bar{p}(y)|\}$$
for any $x,y\in B \cup B_n\cup C_n$.
Conditions 1)--4) are obvious. We have to check only $\rho_n(b, b_n)<\frac{35\eps_n}{2}$ in 5).
Let us consider three cases:
\begin{itemize}
\item[1)] $\max\{g'_n(b,b_n), g'_n(\bar{r}(b),\bar{r}(b_n)), |\bar{p}(b)-\bar{p}(b_n)|\}=g'_n(b,b_n)$, then $\rho_n(b,b_n)<\frac{25}{2}\eps_n$
\item[2)] $\max\{g'_n(b,b_n), g'_n(\bar{r}(b),\bar{r}(b_n)), |\bar{p}(b)-\bar{p}(b_n)|\}=g'_n(\bar{r}(b),\bar{r}(b_n))$, then \\
$\rho_n(b,b_n)=g'_n(\bar{r}(b),\bar{r}(b_n))=g'_n(r'(b),R_n(b_n))=g'_n(r'(b),r''_n(b_n))\leq g'_n(r'(b),r'_n(b_n))+ d'_n(r'_n(b_n),r''_n(b_n))<\frac{25}{2}\eps_n+5\eps_n=\frac{35}{2}\eps_n$, if $\bar{r}(b)=b, \bar{r}(b_n)=r''_n(b_n)$ \\
otherwise $\rho_n(b,b_n)=g'_n(\bar{r}(b),\bar{r}(b_n))=g'_n(r'(b),R_n(b_n))\leq f_n(r(a^{(k)}),r'_n(a_n^{(k)}))+d'_n(r'_n(a_n^{(k)}),r''_n(a_n^{(k)}))<\frac{5\eps_n}{2}+5\eps_n<8\eps_n$, for $\bar{r}(b)=r(a^{(k)})=a^{(k)}$, $\bar{r}(b_n)=r''_n(a_n^{(k)})$,
\item[3)] $\max\{g'_n(b,b_n), g'_n(\bar{r}(b),\bar{r}(b_n)), |\bar{p}(b)-\bar{p}(b_n)|\}=|\bar{p}(b)-\bar{p}(b_n)|$, then $\rho_n(b,b_n)=|\bar{p}(b)-\bar{p}(b_n)|=0$ if $\bar{p}(b)=0$ and $\bar{p}(b_n)=0$,\\otherwise
$\rho_n(b,b_n)=|\bar{p}(b)-\bar{p}(b_n)|=|p'(b)-P_n(b_n)|=|p'(b)-p'_n(b_n)+p'_n(b_n)-p''_n(b_n)|\leq |p'(b)-p'_n(b_n)|+|p'_n(b_n)-p''_n(b_n)|<\frac{\eps_n}{2}+3\eps_n<4\eps_n$
\end{itemize}

\end{proof}

Enumerate sets $B \cup B_n\cup C_n=\{x^{(j)} \colon j=1,\ldots,2m+k+2\}$ and $B_{n+1} \cup B_n\cup D_n=\{x_{n+1}^{(j)} \colon j=1,\ldots,2m+k+2\}$ with $$x^{(j)}=\begin{cases}a^{(j)} \textrm{ for } j=1,\ldots,m, \\ a_n^{(j-m)} \textrm{ for } j=m+1,\ldots,2m, \\ b_n \textrm{ for } j=2m+1, \\ c_n^{(j-2m-1)} \textrm{ for } j=2m+2,\ldots,2m+k+1 \\ b \textrm{ for } j=2m+k+2, \end{cases}$$
and
$$x_{n+1}^{(j)}=\begin{cases}a_{n+1}^{(j)} \textrm{ for } j=1,\ldots,m, \\ a_n^{(j-m)} \textrm{ for } j=m+1,\ldots,2m, \\ b_n \textrm{ for } j=2m+1, \\ d_n^{(j-2m-1)} \textrm{ for } j=2m+2,\ldots,2m+k+1, \\ b_{n+1} \textrm{ for } j=2m+k+2, \end{cases}$$
where $A_n=\{a_n^{(j)} \colon j=1,\ldots,m\}, \  C_n=\{c_n^{(j)} \colon j=1,\ldots,k\}, \ D_n=\{d_n^{(j)} \colon j=1,\ldots,k\},$ note that there is $k \in \N$ such that for all $n \in \N$ we have $|A_n|=|A_{n+1}|=|A|=m \geq k=|D_n|=|C_n|.$ 

Using Claim \ref{cl1} we get a finite triple $((B \cup B_n\cup C_n,\rho_{n}),\bar{r},\bar{p})$. Applying Lemma \ref{rmc} for above triple we get a rational triple $((B_{n+1} \cup B_n\cup D_n,d_{n+1}),r'_{n+1},p'_{n+1})$, such that $|d_{n+1}(x_{n+1}^{(j)}, x_{n+1}^{(l)})-\rho_n(x^{(j)}, x^{(l)})|<\frac{\eps_{n+1}}{2}$, $r'_{n+1}(x_{n+1}^{(j)})=\bar{r}(x^{(j)})$ and $|p'_{n+1}(x_{n+1}^{(j)})-\bar{p}(x^{(j)})|<\frac{\eps_{n+1}}{2}$, $x_{n+1}^{(j)}, x_{n+1}^{(l)}\in B_{n+1} \cup B_n\cup D_n$, $x^{(j)}, x^{(l)}\in B\cup B_n \cup C_n$ for $j, l=1, \dots, 2m+k+2$ . Note that $d_{n+1}\restriction B_n \times B_n=d_n$ and \begin{align*}&|d_{n+1}(x_{n+1}^{(l)}, x_{n+1}^{(j)})-g_n(x^{(l)}, x^{(j)})|=|d_{n+1}(x_{n+1}^{(l)}, x_{n+1}^{(j)})-\rho_n(x^{(l)}, x^{(j)})|<\frac{\eps_{n+1}}{2},
	\end{align*}
for $x_{n+1}^{(l)}, x_{n+1}^{(j)}\in B_n, x^{(l)}, x^{(j)}\in B$ and
\begin{align*}&d_{n+1}(x_{n+1}^{(l)}, x_{n}^{(l)})\leq \rho_n(x^{(l)}, x_n^{(l)})+\frac{\eps_{n+1}}{2}\leq 18\eps_{n+1}\end{align*} for $x_{n+1}^{(l)}\in B_{n+1}, x_{n}^{(l)}\in B_n, x^{(l)}\in B$.
 Using Lemma \ref{fpb} for $(A \cup B_n\cup C_n,f_n)$ and isometry $I_n$ we find isometry $h_{n+1}:(A_{n+1} \cup B_n\cup D_n, d_{n+1}\restriction[A_{n+1} \cup B_n\cup D_n]^2 )\to \U_0$ and such that  $d_\U(I_n(x^{(j)}), h_{n+1}(x_{n+1}^{(j)})) <\frac{\eps_{n+1}}{2}$
	\begin{align*}d_{\U}(U \circ h_{n+1}(x_{n+1}^{(j)}),h_{n+1} \circ r'_{n+1}(x_{n+1}^{(j)}))<\eps_{n+1}, \,|D \circ h_{n+1}(x_{n+1}^{(j)}) - p'_{n+1}(x_{n+1}^{(j)})|<\frac{\eps_{n+1}}{2}
	\end{align*}
	for $x^{(j)}\in A\cup B_n\cup C_n, x_{n+1}^{(j)}\in A_{n+1} \cup B_n\cup D_n$.
	
Using condition (UR$^{\star}$)  we get an isometry $i_{n+1} \colon (B_{n+1} \cup B_n\cup D_n, d_{n+1}) \to \U_0$ such that $i_{n+1}$ is a $5\eps_{n+1}$ commuting with $U$ and $3\eps_{n+1}$ commuting with $D$. 

Moreover $d_{\U}(i_{n+1}(x_n^{(j)}), I_n(x^{(j)}))<\frac{5\eps_{n+1}}{2}$,$x_{n+1}^{(j)}, x_{n+1}^{(l)}\in A_{n+1}\cup B_n\cup D_n$, $x^{(j)}, x^{(l)}\in A \cup B_n\cup C_n$ for $j, l=1, \dots, 2m+k+1$.

Applying Proposition \ref{blisko} we extend metric $\rho'_n$ to metric $\rho'_{n+1}$ on space $B \cup \bigcup_{k=0}^{n+1} B_k$  such that $\rho_{n+1}'(x^{(j)}, x_{n+1}^{(j)})<\frac{\eps_{n+1}}{2}$, for $x^{(j)}\in B, x_n^{(j)}\in B_n$, $j=1,\dots, m+1$ and $1-$Lipschitz function to $\bar{R}_{n+1}, \bar{P}_{n+1}$ such that $\bar{R}_{n+1}\restriction (B\cup \bigcup_{k=0}^{n} B_k)=\bar{R}_{n}$, $\bar{R}_{n+1}\restriction B_{n+1}=r'_{n+1}$ and $\bar{P}_{n+1}\restriction (B\cup \bigcup_{k=0}^{n} B_k)=\bar{P}_{n}$,  $\bar{P}_{n+1}\restriction B_{n+1}=p'_{n+1}$.

Denote $j'_{n+1}:=i_{n+1}\restriction B_{n+1}$, it is an isometry.

Let us define a metric $\rho'=\bigcup_{n=1}^{\infty} \rho_n'$ on the set $B \cup \bigcup_{n=1}^{\infty} B_n$ and a function $j'=\bigcup_{n=1}^{\infty} j_{n}' \colon (\bigcup_{n=1}^{\infty} B_n,\rho') \to \U_0 \subset \U.$ Then $j' \restriction B_n=i_{n} \restriction B_n=j'_n \colon B_n\to \U_0$ is an isometry. Moreover $d_{\U}(i_{n+1}(x_n^{(j)}), I_n(x_{n}^{(j)}))=d_{\U}(i_{n+1}(x_n^{(j)}), j'_n(x_{n}^{(j)}))=d_{\U}(i_{n+1}(x_n^{(j)}), i_n(x_{n}^{(j)}))<\frac{5\eps_{n+1}}{2}$ for $x_n\in B_n$, $j=1,\dots, m+1$.

\textbf{Construction of $j^{\star}$ and $i^{\star}$:}

Now, let us see that $$\cl_{B \cup \bigcup_{n=1}^{\infty} B_n}( \bigcup_{n=1}^{\infty} B_n)=B \cup \bigcup_{n=1}^{\infty} B_n$$ with respect to the metric $\rho'.$ 
Indeed, a repeated construction of $\rho_n$ guarantees that $\rho'(x_n^{(k)},x^{(k)}) \leq \frac{\eps_{n}}{2}$ for any $x^{(j)}\in B$, $x_n^{(j)}\in B_n$, consequently, $\lim_{n \to \infty} x_{n}^{(k)}=x^{(k)}$.

Let us consider an extension $j^{\star} \colon B\cup \bigcup_{n=1}^{\infty} B_n \to \U$ of $j'$ given by 
$$j^{\star}(x)=\begin{cases} j'(x) & \textrm{ if } x \in \bigcup_{n=1}^{\infty} B_n\\ \lim_{n \to \infty} j'(x_n^{(k)}) & \textrm{ if } \lim_{n \to \infty} x_n^{(k)}=x \in B \ \textrm{ for some } x_n^{(k)}\in B_n. \end{cases}$$ 

In order to verify that such a function is well-defined we have to argue for the existence of all limits $\lim_{n \to \infty} j'(x_n^{(k)}), \ k=1,\ldots,m+1.$ Indeed, such limit exists, since for any $n,l \in \N$ with $n>l$ and $k=1,\ldots,m+1$ we have
\begin{align*}
    21\sum_{j=l}^{n-1}\eps_j&>\sum_{j=l}^{n-1}(d_{j+1}(x_{j+1}^{(k)},x_{j}^{(k)})+d_{\U}(i_{j+1}(x_j^{(k)}),i_{j}(x_{j}^{(k)})))=\\& =\sum_{j=l}^{n-1} (d_{\U}(i_{j+1}(x_{j+1}^{(k)}),i_{j+1}(x_{j}^{(k)}))+d_{\U}(i_{j+1}(x_j^{(k)}),i_{j}(x_{j}^{(k)}))) \\& 
    \geq d_{\U}(j'_n(x_n^{(k)}),j'_l(x_l^{(k)}))=d_{\U}(j'(x_n^{(k)}),j'(x_l^{(k)})),
\end{align*}
 which shows that $(j'(x_n^{(k)}))_{n \in \omega}$ is a Cauchy sequence in a complete space $\U$ for any $j,l=1,\ldots,m+1.$

Moreover such formula guarantees a continuity of $j^{\star}.$ We define $i^{\star}=j^{\star} \restriction B \colon B \to \U.$ 

\textbf{We will check that $i^{\star} \restriction A=i.$} Fix $k=1,\ldots,m.$ We have
\begin{align*}&d_{\U}(i^{\star}(a^{(k)}),i(a^{(k)}))=d_{\U}(j^{\star}(a^{(k)}),i(a^{(k)}))=\lim_{n \to \infty} d_{\U}(j^{\star}(a_n^{(k)}),i(a^{(k)}))\\&=\lim_{n \to \infty} d_{\U}(j'(a_n^{(k)}),i(a^{(k)}))=\lim_{n \to \infty} ( d_{\U}(j'(a_n^{(k)}),j'_n(a_n^{(k)}))\\&+d_{\U}(j'_n(a_n^{(k)}),i(a^{(k)}))) \leq \lim_{n \to \infty}\frac{5\eps_n}{2}\to 0\end{align*}
 as $n\to \infty$, then we get $i^{\star}(a^{(k)})=i(a^{(k)})$ for any $k=1,\ldots,m.$

Furthermore, \textbf{$i^{\star}$ is an isometry}. Indeed, fix $k=1,\ldots,m$ and note that 
\begin{align*}&d_{\U}(i^{\star}(a^{(k)}),i^{\star}(b))=d_{\U}(j^{\star}(a^{(k)}),j^{\star}(b))=\lim_{n \to \infty} d_{\U}(j^{\star}(a_{n}^{(k)}),j^{\star}(b_n))=\\&\lim_{n \to \infty} d_{\U}(j'(a_{n}^{(k)}),j'(b_n))=\lim_{n \to \infty} d_{\U}(j_n'(a_{n}^{(k)}),j_n'(b_n))=
\lim_{n \to \infty} \rho_n'(a_{n}^{(k)},b_n)\\&=\lim_{n \to \infty} \rho'(a_{n}^{(k)},b_n)=\rho'(a^{(k)},b).
\end{align*}

Now, we will check that $i^{\star}$ \textbf{is commuting} with respect to $U$.
We have
\begin{align*}&d_{\U}(U(i^{\star}(b)),i^{\star}(r'(b)))\leq d_{\U}(U(i^{\star}(b)),U(j_n'(b_n)))+d_{\U}(U(j_n'(b_n)),j_n'(r_n(b_n)))
\\&+d_{\U}(j_n'(r_n(b_n)),i^{\star}(r'(b))) \leq d_{\U}(i^{\star}(b),j_n'(b_n))+5\eps_n
+d_{\U}(j_n'(r_n(b_n)),i^{\star}(r'(b)))\\& \leq d_{\U}(j^{\star}(b),j^{\star}(b_n))+5\eps_n
 +d_{\U}(j^{\star}(r_n(b_n)),j^{\star}(r'(b)))<5\eps_n\to 0\end{align*}
 when $n\to \infty$ by a continuity of $j^{\star}$ and condition $(i)$.

Similarly,
$$ |D \circ j^{\star}(b)-p'(b)|=\lim_{n \to \infty}|D \circ j_n'(b_n)-p_n(b_n)| \leq \lim_{n \to \infty} 3\eps_n\to 0 \mbox{ when } n\to \infty.$$

}

(UR) $\Rightarrow$  (UR$^{\star}$)
Fix a rational finite triple $((A,d_A),r,p)$, isometric embedding $i : A\to \U_0$ such that $d_{\U_0}(U \circ i(x), i \circ r(x))<\frac{1}{n}$ and $|D \circ i(x) - p(x)|<\frac{1}{n}$, and a rational triple $(B,r',p')$, $B=A\cup \{b\}, b\notin A$ equipped with a metric $d_B$, $r'\restriction A=r$, $p'\restriction A=p$.

Using Lemma \ref{newretraction} we find a metric space $(C, d_{C})$, $A\subseteq C$, with $1-$Lipschitz retraction $R$ and $1-$Lipschitz function $P$ such that  $d_{C}(R(x),r(x))\leq \frac{1}{n}$, $|P(x)-p(x)|\leq \frac{1}{n}$ for $x\in A$ and isometry $I:C\to \U$ such that $U \circ I(x) = I \circ R(x)$,  $D \circ I(x) = P(x)$ for any $x\in C$ and $I\restriction A=i$.

Let $d_{\{b\}\cup C}$ denote a metric obtained by Lemma \ref{maxamal} for spaces $C$ and $B$, where $C\cap B =A$. Applying Lemma \ref{aamal} we find a metric $\rho$ on $\{b\}\cup C$ such that $\rho\restriction B\times B-d_B<\frac{2}{n}$, $1-$Lipschitz functions $R'$, $P'$ such that $\rho(R'(x),r'(x))\leq \frac{1}{n}$, $|P'(x)-p'(x)|\leq \frac{1}{n}$ for $x\in B$.

Condition (UR) give us an isometry $I': (\{b\}\cup C,\rho)\to \U$ such that  $I'\restriction C= I$, $I'\restriction A=i$ and $x\in \{b\}\cup C$ we have $$U \circ I'(x) = I' \circ R'(x)\quad \mbox{ and } \quad D \circ I'(x) = P'(x).$$

Then, Lemma \ref{fpb} guarantee existence of an isometry $j':(\{b\}\cup A, d_{\{b\}\cup A})\to \U_0$ such that $d_\U(j'(x),I'(x)) <\frac{2}{n}$ for any $x\in \{b\}\cup A$.

Let us consider the isometry $i':=j'\restriction B$, then $i':B\to \U_0$ is such that $d_\U(i'(x),I'(x)) <\frac{2}{n}$ for $x\in B$, especially $d_\U(i'(x),i(x)) <\frac{2}{n}$ for any $x\in A$.

Moreover for any $x\in B$, $a\in A$ we have
\begin{align*}&d_{\U}(U \circ i'(x), i' \circ r'(x))=d_{\U}(U \circ i'(x),U \circ I'(x))+d_{\U}(U \circ I'(x), I' \circ R'(x))\\&+d_{\U}(I' \circ R'(x), I'\circ r'(x))+d_{\U}(I' \circ r'(x),i' \circ r'(x))<\frac{2}{n} +\frac{1}{n}+\frac{2}{n}=\frac{5}{n}\\
 &|D \circ i'(x) - p'(x)|=|D \circ i'(x) -D \circ I'(x)|+|D \circ I'(x)-P'(x)|+|P'(x)- p'(x)|\\&<d_\U(i'(x),I'(x))+\frac{1}{n}<\frac{3}{n}.
\end{align*}
\end{pf}

\section{A space of retractions}



Let us recall the notations
\begin{align*}
\mathcal{R}(\U)=&\{ R:\U\to \U \colon  R \text{ is $1-$Lipschitz retraction}\}
\end{align*}
\mbox{ and}
\begin{align*}
\mathcal{U}(\U)=&\{ U \in \mathcal{R}(\U) \colon  U \text{ is universal, ultrahomogeneous}\}.
\end{align*}
We study the set $\mathcal{R}(\U)$ and its subspaces with three topologies of: pointwise convergence $\tau_p$, pointwise retract convergence $\tau_{pr}$ and uniform convergence $\tau_u$. Below we describe basic open neighbourhoods in both topologies $\tau_p,\tau_{u}$.

The basic open neighbourhoods of $1-$Lipschitz retraction $U$ endowed with the pointwise convergence topology $\tau_p$ are of the form
$$\mathcal{O}(U)_{X,\eps}=\{R\in \mathcal{R}(\U): d_\U(R(x_i),U(x_i))<\eps, x_1,\dots x_n\in X\},$$
with $X=\{x_1,\ldots,x_n\} \subset \U$ is finite and $\varepsilon>0$.

Respectively, basic open neighbourhoods of $1-$Lipschitz retraction $U$ in  the uniform convergence topology $\tau_u$ are of the form
$$V(U)_{\varepsilon}=\{R\in \mathcal{R}(\U): d_\U(R(x),U(x))<\eps \textrm{ for any } x\in \U\}.$$

 Now, we will check that $\tau_{pr}$ is a topology. More precisely, we will verify that for any $U \in \mathcal{R}(\U)$ the family $$\{W(U)_{X,\eps} \colon X=\{x_1,x_2,\ldots,x_n\} \subset \U, \ \eps>0\}$$ is a base at a point $U$. Indeed, fix $U \in \mathcal{R}(\U), X=\{x_1,x_2,\ldots,x_n\} \subset \U, \ \eps>0$ and $R \in W(U)_{X,\eps}$. Since $X$ is finite, there is $\eta>0$ with 
$$\forall_{i=1,\ldots,n} \ (d_{\U}(R(x_i),z)<\eta \mbox{ and } |D_{R}(x_i)-y|<\eta) \Rightarrow (d_{\U}(U(x_i),z)<\eps \mbox{ and } |D_{U}(x_i)-y|<\eps).$$ Then $W(R)_{X,\eta} \subset W(U)_{X,\eps}$ is a basic neighbourhood of $R$. Moreover, if we fix two basic neighbourhoods $W(U)_{X_1,\eps_1},W(U)_{X_2,\eps_2}$ of $U$, we can find $$W(U)_{X_1 \cup X_2,\min\{\eps_1,\eps_2\}} \subset W(U)_{X_1,\eps_1} \cap W(U)_{X_2,\eps_2}.$$ 
However, it can be easily seen that $\tau_{pr}$ is homeomorphic to a pointwise convergence topology of the subspace 
 $$\{(R,D_{R}) \colon R \in \mathcal{R}(\U)\}$$ of the space 
 $$\{(f,g) \colon f \in C(\U,\U),g \in C(\U,\R)\}=C(\U \times \U,\U \times \R)$$ of all continuous functions $\U \times \U \to \U \times \R.$

Furthermore, it is easy to observe that
$$\tau_p \subset \tau_{pr} \subset \tau_u.$$
Since an inlcusion $\tau_{p} \subset \tau_{pr}$ is immediate, we will only check that $\tau_{pr} \subset \tau_u.$ Take $U \in \mathcal{U}(\U)$ and a neighbourhood $W(U)_{X,\eps}$ of $U$ in $\tau_{pr}.$  Then, we have $V(U)_{\frac{\eps}{2}} \subset W(U)_{X,\eps}.$

If $F_{\U}$ is a retract of some universal, ultrahomogeneous retraction $U\colon \U \to \U$, then we define $\mathcal{R}(\U,F_{\U})$ and $\mathcal{U}(\U,F_{\U})$ as the subspaces of $\mathcal{R}(\U),\mathcal{U}(\U)$, consisting of retractions onto the retract $F_{\U}.$
It is easy to observe that
\begin{rem} \label{p=pr}
Topology $ \tau_{pr}$ coincides with  $\tau_p$ on $\mathcal{R}(\U,F_{\U}).$
\end{rem}

Before we start analyzing topological properties of $\mathcal{U}(\U)$ inside of $\mathcal{R}(\U)$ let us observe that the family 
$$\{R[\U] \colon R \in \mathcal{R}(\U)\} \setminus \{U[\U] \colon U \in \mathcal{U}(\U)\}$$ is rich. Recall that $(X,d)$ is $1$-LAR (acronym formed from \emph{1-Lipschitz Absolute Retract}) metric space if for any other metric space $(Y,d_Y)$ with $d_Y \restriction X \times X=d$ there is a $1$-Lipschitz retraction $R \colon Y\to X$ onto $X$ (see more in \cite{BBGNP}). Now, take any separable $1-$LAR metric space $X$ and use a universality of $\U$ to find an isometry $i \colon X\to \U.$ Then, $i[X]$ is $1-$LAR, again. By a definition, there is a $1-$Lipschitz retraction $R \colon \U \to \U$ with $R[\U]=i[X].$ For instance, for any $n \in \N$ the set $[0,1]^n$ considered with a metric $d_{max}$ is $1-$LAR. However, it is known that $\U$ is not $1-$LAR space (see \cite{AI}).


\begin{prop} \label{inter}
$$\mathcal{U}(\U)=\bigcap_{\substack {A \subset \U_0, \ |A|<\infty, \ b \in \U_0, \ n \in \N\\  (A \cup \{b\},r,p) - \mbox{ rational triple }\\ i \colon A \to \U_0  \mbox{ - isometry} \\ }} \bigcup_{x \in A} (E_{A,r,i,p,b,n,x}^c \cup F_{A,r,i,p,b,n,x}^c) $$ $$ \cup \bigcup_{\substack {i':A\cup \{b\}\to \U_0 \mbox{ - isometry with } d_{\U_0}(i'(x),i(x))<\frac{2}{n} \\ \mbox{ for any  } x \in A}}$$ $$ \left[ \bigcap_{x \in A}  (E_{A,r,i,p,b,n,x} \cap F_{A,r,i,p,b,n,x}) \cap \bigcap_{x \in A \cup \{b\}} B_{A,r,i',p,b,n,x} \cap C_{A,r,i',p,b,n,x}\right],$$
	where 
	$$B:=B_{A,r,i',p,b,n,x}=\left\{U \in \mathcal{R}(\U) \colon d_{\U}(U \circ i'(x), i' \circ r(x))<\frac{5}{n} \right\},$$
	$$C:=C_{A,r,i',p,b,n,x}=\left\{U \in \mathcal{R}(\U) \colon |D_U \circ i'(x) - p(x)|<\frac{3}{n}\right\},$$
	$$E:=E_{A,r,i,p,b,n,x}=\left\{U \in \mathcal{R}(\U) \colon d_{\U}(U \circ i(x), i \circ r(x))<\frac{1}{n} \right\},$$
	$$F:=F_{A,r,i,p,b,n,x}=\left\{U \in \mathcal{R}(\U) \colon |D_U \circ i(x) - p(x)|<\frac{1}{n} \right\}.$$
\end{prop}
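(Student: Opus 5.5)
The identity is a direct transcription of condition (UR$^{\star}$) into set-theoretic operations, so I plan to combine it with Theorem~\ref{URuu} and Theorem~\ref{aprox}. By those two results, $U\in\mathcal{U}(\U)$ iff $U$ satisfies (UR) iff $U$ satisfies (UR$^{\star}$). It therefore suffices to show that $U$ belongs to the right-hand side exactly when (UR$^{\star}$) holds.

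First I will reduce the quantifier over $\eps>0$ in (UR$^{\star}$) to the countable family $\eps=\frac1n$, $n\in\N$. The version of (UR$^{\star}$) with $\eps=\frac1n$ (hypothesis errors $<\frac1n$, conclusion errors $<\frac2n,\frac5n,\frac3n$) is implied by the general one. Conversely, the proof of (UR$^{\star}$)$\Rightarrow$(UR) in Theorem~\ref{aprox} only uses (UR$^{\star}$) along a sequence $\eps_n\to0$. Replacing $\eps_n$ by reciprocals of integers is harmless: we may pick $\eps_n=\frac1{k_n}$ with $21\sum\frac1{k_n}<\eps$. Similarly, the triples $(A,r,p)$ and the embeddings $i$ range over rational data. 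Since $A$ is finite, rational, and embedded in $\U_0$, we may identify $A$ with its image, and we may take $A\subset\U_0$ and $b\in\U_0$ as abstract labels. This is only a bookkeeping convention for indexing a countable family: a rational finite metric space with one extra point can be realized on labels from $\U_0$. So the intersection runs over a countable (or at least well-defined) index set that exhausts all instances of (UR$^{\star}$) with $\eps=\frac1n$.

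Next, fix one index $(A,r,p,b,n,i)$. Read the inner expression as the implication ``hypothesis $\Rightarrow$ conclusion''. The hypothesis of (UR$^{\star}$) is that $i$ is $\frac1n$-commuting with $U$ and $D_U$ at every $x\in A$, that is, $U\in\bigcap_{x\in A}(E\cap F)$. Its negation is $U\in\bigcup_{x\in A}(E^c\cup F^c)$. The conclusion is that there exists an isometry $i'\colon A\cup\{b\}\to\U_0$ with $d(i'(x),i(x))<\frac2n$ on $A$ such that $U\in\bigcap_{x\in A\cup\{b\}}(B\cap C)$. The union over such $i'$ of the bracketed set, intersected with the hypothesis sets $E\cap F$, is exactly ``hypothesis and conclusion''. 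Since $\neg H\vee(H\wedge C)\equiv\neg H\vee C$, the inner set equals $\{U\colon H(U)\Rightarrow C(U)\}$. Intersecting over all indices then gives precisely the set of $U$ satisfying (UR$^{\star}$) with $\eps=\frac1n$, which by the first step equals $\mathcal{U}(\U)$.

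The main obstacle is the reduction to $\eps=\frac1n$ together with rational data, i.e.\ checking that the proof of Theorem~\ref{aprox} indeed only needs these instances. I will handle it by rereading that proof. Every invocation of (UR$^{\star}$) there is applied to rational triples and rational embeddings produced by Lemmas~\ref{rmc} and~\ref{fpb}, with tolerance $\eps_{n}$. Choosing the $\eps_n$ as reciprocals of integers, and noting that (UR) itself needs no $\eps$, closes the gap. I will also double-check the convention that the index $A\subset\U_0$ plays the role of an abstract rational space, with $i$ an arbitrary isometry into $\U_0$, so that no instance is missed.
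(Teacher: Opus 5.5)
Your proposal is correct and takes the same route as the paper. The paper's proof is only the remark that the identity follows from Theorem~\ref{aprox}, tacitly combined with Theorem~\ref{URuu} and with reading the inner set as ``the hypothesis of (UR$^{\star}$) fails, or it holds and the conclusion holds''. You additionally make explicit what the paper leaves unstated: only the instances $\eps=\frac1n$ are needed, because the proof of (UR$^{\star}$)$\Rightarrow$(UR) invokes (UR$^{\star}$) only along a summable sequence $\eps_n$, and this sequence can be chosen to consist of reciprocals of integers.
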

\begin{proof}
It follows from Theorem \ref{aprox}.
\end{proof}

\begin{prop} \label{gd}
	
	\begin{enumerate}[(i)]
		
	\item 	$\mathcal{U}(\U)$ is of type $G_{\delta\sigma\delta}$ in $\mathcal{R}(\U)$ in $\tau_p$.

    \item  $\mathcal{U}(\U)$ is of type $G_{\delta}$ in $\mathcal{R}(\U)$ in $\tau_{pr}$ and $\tau_u$.

    \item 	$\mathcal{U}(\U,F_{\U})$ is of type $G_{\delta}$ in $\mathcal{R}(\U,F_{\U})$ in $\tau_p=\tau_{pr}$ and $\tau_{u}$.
	
	\end{enumerate}
	
\end{prop}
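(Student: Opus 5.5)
We start from the representation in Proposition \ref{inter}. The outer intersection is indexed by a countable family, since $A\subset\U_0$ is finite, $b\in\U_0$, $n\in\N$, the rational triples $(A\cup\{b\},r,p)$ and the isometries $i\colon A\to\U_0$ range over countable sets. The inner union over $i'\colon A\cup\{b\}\to\U_0$ is countable as well. So the whole task is to compute the Borel class of the sets $B,C,E,F$ in each topology and then count quantifiers. Recall that $D_U(x)=\dist(x,U[\U])$, and that this equals $\dist(x,\{U(y)\colon y\in\U_0\})$ because $U[\U_0]$ is dense in $U[\U]$.

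\textbf{Sets $B$ and $E$.} The map $U\mapsto d_\U(U(z),w)$ is continuous in $\tau_p$ for fixed $z,w$. Hence every $B_{\dots,x}$ and $E_{\dots,x}$ is open in $\tau_p$, and therefore also open in $\tau_{pr}$ and $\tau_u$ by the inclusions $\tau_p\subset\tau_{pr}\subset\tau_u$.

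\textbf{Sets $C$ and $F$ in $\tau_{pr}$ and $\tau_u$.} By the definition of $W(U)_{X,\eps}$ the map $U\mapsto D_U(z)$ is continuous in $\tau_{pr}$, so these sets are open in $\tau_{pr}$. In $\tau_u$ we have $|D_U(z)-D_R(z)|\le\sup_y d(U(y),R(y))$, since moving the retract pointwise by less than $\delta$ changes the distance by at most $\delta$. Hence they are open in $\tau_u$ too.

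\textbf{Sets $C$ and $F$ in $\tau_p$.} The map $U\mapsto D_U(z)=\inf_{y\in\U_0}d(z,U(y))$ is an infimum of $\tau_p$-continuous functions, so it is upper semicontinuous. Consequently $\{U\colon D_U(z)<q\}$ is open. The set $\{U\colon D_U(z)>q\}$ can be written as $\bigcup_{k}\bigcap_{y\in\U_0}\{U\colon d(z,U(y))\ge q+1/k\}$, which is $F_\sigma$. Thus each $C$ and $F$ is open $\cap\,F_\sigma$, and in particular $G_{\delta\sigma}$; the complements $E^c$ and $F^c$ are closed or $G_\delta$ accordingly.

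\textbf{Counting quantifiers.} For $\tau_{pr}$ and $\tau_u$: in each outer term, $\bigcup_x(E^c\cup F^c)$ is closed. The bracket is a finite intersection of open sets, hence open, and a countable union of these is open. So each outer term is closed $\cup$ open, which is $G_\delta$ in a metrizable space. On $\mathcal R(\U)$ with $\tau_{pr}$ this holds because the topology is embedded in a pointwise topology on a product of separable metric spaces; I would check metrizability there, or else simply note that closed sets are $G_\delta$ in the relevant subspace. A countable intersection of $G_\delta$ sets is $G_\delta$, which gives (ii).

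For $\tau_p$: the bracket is a finite intersection of $G_{\delta\sigma}$ sets, hence $G_{\delta\sigma}$. The countable union over $i'$ stays $G_{\delta\sigma}$, and $E^c\cup F^c$ is $G_\delta\subset G_{\delta\sigma}$. Each outer term is therefore $G_{\delta\sigma}$, and the outer intersection is $G_{\delta\sigma\delta}$, which is (i).

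\textbf{Part (iii).} On $\mathcal R(\U,F_\U)$ we have $\tau_p=\tau_{pr}$ by Remark \ref{p=pr}; indeed $D_R=\dist(\cdot,F_\U)$ is constant there. So $C$ and $F$ are either the whole space or empty, and the argument of (ii) applies in the subspace.

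The main obstacle is the $\tau_p$ semicontinuity computation for $D_U$, and making sure that "closed implies $G_\delta$" is legitimate in the topologies used. For the latter I would verify pseudometrizability via a countable dense set of evaluation points: by $1$-Lipschitzness, convergence on $\U_0$ suffices.
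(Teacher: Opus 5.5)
Your proof is correct and is essentially the paper's proof: both start from the countable description in Proposition~\ref{inter}, check that $B,E$ are $\tau_p$-open and that $B,C,E,F$ are $\tau_{pr}$-open, and then count quantifiers, using for (iii) that $C,F$ are trivial on $\mathcal{R}(\U,F_{\U})$. The differences are local: you classify $C,F$ in $\tau_p$ through upper semicontinuity of $U\mapsto D_U(z)=\inf_{y\in\U_0}d_{\U}(z,U(y))$, which gives the sharper facts that $F$ is open $\cap\,F_\sigma$ and $F^c$ is $G_\delta$, and it also avoids the quantifier order in the paper's formula for $F^c$, whose second piece should read $\bigcap_m\bigcup_k H_{k,m}$; and you get closed $\Rightarrow G_\delta$ from metrizability via evaluation on $\U_0$, whereas for (ii) the paper writes $E^c$ and $F^c$ directly as countable intersections of open sets (e.g.\ $F^c=\bigcap_m\{U\colon |D_U(i(x))-p(x)|>\frac1n-\frac1m\}$), which would let you skip the metrizability check altogether.
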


\begin{proof}
	
    We use a description of $\mathcal{U}(\U)$ in a Proposition \ref{inter}.

	
	
	Ad. $(i)$
    
    We firstly check openness of the sets
	$B$ and $E.$  Hence, fix $A,r,i,i',p,b,n,x$ and begin with $B$.
	
	Fix $U \in B$ and $\varepsilon=\frac{5}{n}-d_{\U}(U \circ i'(x), i' \circ r(x)).$ Since $U \in B,$ we have $\varepsilon>0.$ Then the basic set $V=\{R \in \mathcal{R}(\U) \colon d_{\U}(R(i'(x)),U(i'(x)))<\varepsilon\}$ is contained in $B$ since  
	$$d_{\U}(R(i'(x)),i'(r(x))) \leq d_{\U}(R(i'(x)),U(i'(x)))+d_{\U}(U(i'(x)),i'(r(x))) $$
	$$< \varepsilon+d_{\U}(U(i'(x)),i'(r(x)))=\frac{5}{n}.$$
	
	Now, we will check $F$ is of type $G_{\delta \sigma}$ (analogous verification works for $C$). Enumerate $\U_0=\{x_k \colon k \in \N\}.$
	We have
	$$F=\bigcup_{k \in \N} F_k \cap \bigcup_{m \in \N} \bigcap_{k \in \N} G_{k,m},$$ where  
	$$F_k=\left\{R \in R(\U) \colon d_{\U}(i(x),R(x_k)) <p(x)+\frac{1}{n}\right\} $$ and 
	$$G_{k,m}=\left\{R \in R(\U) \colon p(x)-\frac{1}{n}+\frac{1}{m}<d_{\U}(i(x),R(x_k)) \right\}$$
	for $ k,m \in \N.$ Now, fix $k \in \N$ and check that $F_k$ is open (verification that $G_{k,m}$'s are open is analogous). By a continuity of a metric $d_{\U}$ there is a $\delta>0$ such that for all $y \in \U$ with $d_{\U}(y,R(x_k))<\delta$ we have $|d_{\U}(i(x),y) - p(x)|<\frac{1}{n}.$ Therefore for any $U \in R(\U)$ with $d_{\U}(R(x_k),U(x_k))<\delta$ we have $|d_{\U}(i(x),U(x_k)) - p(x)|<\frac{1}{n}.$ Consequently, $F$ is of type $G_{\delta \sigma}.$
	
	Similarly, we have 
	$$F^c=\bigcap_{k,m \in \N} F_{k,m}\cup \bigcup_{k \in \N} \bigcap_{m \in \N} H_{k,m},$$
	where $$F_{k,m}=\{R \in R(\U) \colon d_{\U}(R(x_k),i(x))>p(x)+\frac{1}{n}-\frac{1}{m}\}$$ and
$$H_{k.m}=\{R \in R(\U) \colon d_{\U}(R(x_k),i(x))<p(x)-\frac{1}{n}+\frac{1}{m}\},$$
 where $k,m \in \N.$ Openness of the sets $F_{k,m}, H_{k,m}$ can be checked analogously to the openness of the sets $F_k.$ Consequently, $F^c$ is of type $G_{\delta \sigma}.$
	
    Moreover, we have
	$$E^c=\bigcap_{m \in \N} E_{m},$$ for the open sets $E_m=\{R \in R(\U) \colon d_{\U}(R \circ i(x), i \circ r(x))>\frac{1}{n}-\frac{1}{m}\}, \ m \in \N.$  This shows that $E^c$ is of type $G_{\delta}$ and finally, $\mathcal{U}(\U)$ is of type $G_{\delta \sigma \delta}$.

Ad. $(ii)$

Since $\tau_{pr} \subset \tau_u$ it suffices to check that $\mathcal{U}(\U)$ is of type $G_{\delta}$ with respect to $\tau_{pr}$. Firstly, we have to note that $B,C,E,F$ are open in $\tau_{pr}.$ We will verify only that $F \in \tau_{pr}.$ By a continuity of algebraic operations we can find $\eta>0$ with 
$$|D_U(i(x))-z|<\eta \Rightarrow |z-p(x)|<\eps.$$ Then we have $W(U)_{\{x\},\eta} \subset F.$ 

Now, we will check that $E^c,F^c$ are of type $G_{\delta}.$ Indeed, we have
$F^c= \bigcap_{m \in \N} H_m,$ where $H_m=\{U \in \mathcal{R}(\U) \colon |D_U(i(x))-p(x)|>\frac{1}{n}-\frac{1}{m}\}$ for $m \in \N.$ Note that all sets $H_m$'s are open - reasoning is similar to the verification of the openness of $F$. Consequently, the set $F^c$ is of type $G_{\delta}$ and finally, the set $\mathcal{U}(\U),$ too.

Ad.$(iii)$

    We consider the same description $\mathcal{U}(\U,F_{\U})$ as for $\mathcal{U}(\U)$, but the sets $B,C,E,F$ consist of $U \in \mathcal{R}(\U,F_{\U}).$ As a consequence $F$ and $C$ are either $\emptyset$ or $\mathcal{R}(\U,F_{\U})$ and $\mathcal{U}(\U,F_{\U})$ is of type $G_{\delta}.$
	
\end{proof}


Now, we will focus on the problem of density of $\mathcal{U}(\U)$ in $\mathcal{R}(\U)$.

\begin{tw} \label{dp}
The set $\mathcal{U}(\U)$ is dense in the space $\mathcal{R}(\U)$ with a pointwise retract convergence topology $\tau_{pr}$ and a pointwise convergence topology $\tau_{p}$.
\end{tw}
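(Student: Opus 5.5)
Since $\tau_p\subset\tau_{pr}$, density in $\tau_{pr}$ implies density in $\tau_p$, so it suffices to work with $\tau_{pr}$. Fix $R\in\mathcal{R}(\U)$, a finite set $X=\{x_1,\dots,x_n\}\subset\U$ and $\eps>0$. We must find $T\in\mathcal{U}(\U)$ with $d_\U(T(x_k),R(x_k))<\eps$ and $|D_T(x_k)-D_R(x_k)|<\eps$ for all $k$. The plan is to take a fixed universal ultrahomogeneous retraction $U$ (Doucha's) and conjugate it by an autoisometry, $T=I^{-1}\circ U\circ I$. By Theorem \ref{retauto} ((iii)$\Rightarrow$(i)), such a $T$ is again universal and ultrahomogeneous. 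Moreover $T[\U]=I^{-1}[U[\U]]$, so $D_T=D_U\circ I$. Thus everything reduces to choosing $I$ so that $I$ carries the finite configuration coming from $R$ onto a configuration in $\U$ on which $U$ and $D_U$ behave like $R$ and $D_R$.

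\textbf{Step 1.} Consider the finite triple $(Y,R\restriction Y,D_R\restriction Y)$, where $Y=X\cup R[X]$. It is a finite triple: $R\restriction Y$ is a $1$-Lipschitz retraction of $Y$ because $R$ is idempotent, $D_R$ is $1$-Lipschitz, and $D_R$ vanishes exactly on $R[X]=(R\restriction Y)[Y]$. By universality of $U$ (equivalently, by iterating $(UR)$ via Theorem \ref{URuu}, starting from a single point of $R[X]$ sent into $U[\U]$), there is an isometric embedding $j\colon Y\to\U$ with
\[
U\circ j=j\circ R \quad\text{and}\quad D_U\circ j=D_R \quad\text{on } Y.
\]
One subtlety: the definition of universality is stated for retractions of a whole separable space, with $p$ equal to the true distance to the retract. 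On a finite piece, $D_R$ need not equal $\dist(\cdot,R[X])$. This is exactly why I would use $(UR)$ with general $1$-Lipschitz $p$ rather than bare universality. The one-point extension property allows arbitrary finite triples $(A,r,p)$, so iterating it point by point is legitimate.

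\textbf{Step 2.} Extend the finite partial isometry $j^{-1}$, viewed as a map $j[Y]\to Y$, to an autoisometry $I\colon\U\to\U$ with $I\restriction Y=j$. This uses plain ultrahomogeneity of $\U$ (finite partial isometries between subsets of $\U$ extend to autoisometries). Set $T=I^{-1}UI$. For $x\in X$ we get
\[
T(x)=I^{-1}U(j(x))=I^{-1}j(R(x))=R(x),
\]
since $R(x)\in Y$ and $I^{-1}\circ j=\mathrm{id}$ on $Y$. Similarly $D_T(x)=D_U(I(x))=D_U(j(x))=D_R(x)$. So $T$ lies in every basic neighbourhood $W(R)_{X,\eps}$, and in fact matches $R$ exactly on $X$.

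The main obstacle is Step 1: producing the exact embedding $j$ of the finite triple into $(\U,U,D_U)$. One has to check that $(UR)$ can be started (sending a retract point into $U[\U]$ with $p=0$) and then iterated to add each remaining point of $Y$. It must also be verified at each stage that the intermediate sets, with the restricted $r$ and $p$, are legitimate finite triples, in particular that $p$ vanishes exactly on the image of $r$. Since $r$ and $p$ are restrictions of $R$ and $D_R$, it suffices to add all points of $R[X]$ first and then the remaining points of $X$. Everything else is formal.
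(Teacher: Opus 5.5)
Your proof is correct, and its overall architecture matches the paper's. You conjugate a fixed $U\in\mathcal{U}(\U)$ by an autoisometry $I$ of $\U$ that sends $X\cup R[X]$ onto a configuration on which $U$ and $D_U$ reproduce $R$ and $D_R$ exactly. You then invoke Theorem \ref{retauto} to get $I^{-1}\circ U\circ I\in\mathcal{U}(\U)$, and note exact agreement on $X$.

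You differ from the paper in how the exact embedding is produced. The paper applies universality of $U$ to the whole retraction $T\colon\U\to T[\U]$ of the separable space $\U$. This gives $\bar i\colon\U\to\U$ with $U\circ\bar i=\bar i\circ T$ and $D_U\circ\bar i=D_T$ on all of $\U$. The paper then extends $\bar i\restriction(X\cup T[X])$ to an autoisometry using ultrahomogeneity of $\U$. This global application is how it sidesteps the subtlety you identified: universality applied only to the finite piece $X\cup T[X]$ would control $\dist(\cdot,T[X])$, not $D_T$. The paper's text does first write down such a finite version, but its final computation relies on the global $\bar i$.

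Your alternative is equally valid and slightly more elementary. You iterate $(UR)$ on the finite triple $(Y,R\restriction Y,D_R\restriction Y)$, adding the points of $R[X]$ before those of $X\setminus R[X]$, so that every intermediate triple satisfies $p^{-1}(0)=r[A]$. This needs only the one-point extension property (Doucha's lemma, or (ii)$\Rightarrow$(i) of Theorem \ref{URuu}), not full universality, whose derivation from $(UR)$ is the longer game argument. The paper's route is shorter once universality is taken as given.

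A cosmetic point: in Step 2 you should simply extend $j\colon Y\to j[Y]$ itself to an autoisometry, or extend $j^{-1}$ and invert the result. As written, extending $j^{-1}$ produces $I^{-1}$, not $I$.
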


\begin{proof}
Since $\tau_{pr} \subset \tau_p$ it suffices to consider a topology $\tau_{pr}.$
Fix any $1-$Lipschitz retraction $T\in \mathcal{R}(\U)$ and any neighbourhood 
$$\mathcal{W}(T)_{X,\eps}=\{S\in  \mathcal{R}(\U): d(S(x_i),T(x_i))< \eps \mbox{ and } |D_S(x)-D_T(x)|<\eps, x_1,\dots, x_n\in X\}$$ of $T$ in $(\mathcal{R}(\U),\tau_{pr})$ for some $X=\{x_1,x_2,\ldots,x_n\} \subset \U, \ \eps>0.$

Let us define $Z=X \cup T[X].$ Then $T[Z] \subset Z.$ Consider the $1-$Lipschitz retraction $U:\U\to U[\U] \subset \U$ satisfying condition $(UR)$. Then using Theorem \ref{URuu} $U$ is universal. So there exist isometric embedding $i:Z\to \U$  such that $U\circ i(x)=i\circ T(x)$ and $\dist(x,T[Z])=\dist(i(x),U[\U])$ for any $x \in Z$.  Again using universality of $U$ there exists isometry $\bar{i}:\U\to \U$ such that $U\circ \bar{i}(x)=\bar{i}\circ T(x)$ and $\dist(x,T[\U])=\dist(\bar{i}(x),U[\U])$ for any $x \in \U$ extending $i$. 
From ultrahomogeneity of the Urysohn space (\cite{M}) there exists an autoisometry $I \colon \U \to \U$ extending $i$. Then we define $S=I^{-1}\circ U\circ I$, by Theorems \ref{retauto} and \ref{URuu} $S$ satisfies (UR). 
Moreover, $S \in \mathcal{W}(T)_{X,\eps}$, since 
\begin{align*}
d(S(x),T(x))&=d(I^{-1}\circ U\circ I(x), T(x))=d(I^{-1}\circ U\circ i(x), T(x))=0<\eps
\end{align*}
and 
\begin{align*}
&|D_S(x)-D_T(x)|=|\mbox{dist}(x,S[\U])-\mbox{dist}(x,T[\U])|= |\mbox{dist}(x,I^{-1}\circ U\circ I[\U])-\mbox{dist}(x,T[\U])|\\
&=|\mbox{dist}(I(x),U\circ I[\U])-\mbox{dist}(x,T[\U])|= |\mbox{dist}(I(x),U[\U])-\mbox{dist}(x,T[\U])|\\&=|\mbox{dist}(i(x),U[\U])-\mbox{dist}(x,T[\U])|=|\mbox{dist}(\bar{i}(x),U[\U])-\mbox{dist}(x,T[\U])|=0<\eps
\end{align*}
for any $x \in X.$
\end{proof}

As a consequence of Proposition \ref{gd} and Theorem \ref{dp} we get following:
\begin{tw}
\begin{enumerate}[(i)]
\item The set $\mathcal{U}(\U)$ is a dense $G_{\delta \sigma \delta}$ subset of $\mathcal{R}(\U)$ in a pointwise convergence topology.
\item The set $\mathcal{U}(\U)$ is a dense $G_{\delta}$ subset of $\mathcal{R}(\U)$ in a pointwise retract convergence topology.
\end{enumerate}
\end{tw}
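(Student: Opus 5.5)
This theorem should follow by combining the two preceding results, Proposition \ref{gd} for the Borel complexity and Theorem \ref{dp} for density. No new construction is needed. The only care required is to check that each ingredient is stated in the topology we need.

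For (i), work in $(\mathcal{R}(\U),\tau_p)$. Proposition \ref{gd}(i) says that $\mathcal{U}(\U)$ is of type $G_{\delta\sigma\delta}$ there. That result rests on the description in Proposition \ref{inter}, which comes from the equivalence (UR)$\Leftrightarrow$(UR$^\star$) of Theorem \ref{aprox}: the sets $B,E$ are $\tau_p$-open, the sets $C,F$ and their complements are $G_{\delta\sigma}$, and the index set is countable. Density in $\tau_p$ is stated directly in Theorem \ref{dp}. As a sanity check, note that $\tau_p\subset\tau_{pr}$: a set that meets every nonempty $\tau_{pr}$-open set also meets every nonempty $\tau_p$-open set. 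So density in $\tau_p$ follows from density in $\tau_{pr}$ in any case.

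For (ii), work in $(\mathcal{R}(\U),\tau_{pr})$. Proposition \ref{gd}(ii) gives that $\mathcal{U}(\U)$ is $G_\delta$ in $\tau_{pr}$, and Theorem \ref{dp} gives density in $\tau_{pr}$. Recall how the density proof goes. Fix a basic neighbourhood $W(T)_{X,\eps}$. Use universality of a fixed $U$ satisfying (UR), which exists by Doucha's construction together with Theorem \ref{URuu}, to embed $Z=X\cup T[X]$ commutingly. Extend this embedding to an autoisometry $I$ of $\U$, and set $S=I^{-1}\circ U\circ I$. By Theorem \ref{retauto}, $S\in\mathcal{U}(\U)$, and $S$ agrees with $T$ and $D_S$ agrees with $D_T$ on $X$.

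The only point where I expect any friction is the bookkeeping of topologies. The density theorem's proof contains the phrase ``$\tau_{pr}\subset\tau_p$'', while the paper elsewhere establishes $\tau_p\subset\tau_{pr}$. The correct direction for transferring density is from the finer topology $\tau_{pr}$ to the coarser $\tau_p$, so I would state this explicitly. With that settled, both parts are immediate conjunctions of the cited results.
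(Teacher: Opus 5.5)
Your proposal is correct and matches the paper, which records the theorem as an immediate consequence of Proposition~\ref{gd} (Borel complexity in $\tau_p$ and $\tau_{pr}$) and Theorem~\ref{dp} (density); you are also right that since $\tau_p\subset\tau_{pr}$, density passes from the finer $\tau_{pr}$ to the coarser $\tau_p$, so ``$\tau_{pr}\subset\tau_p$'' in the proof of Theorem~\ref{dp} is a slip. One caution about your recap of Theorem~\ref{dp}, which is not load-bearing here: the embedding of $Z=X\cup T[X]$ must be the restriction to $Z$ of a universality embedding of all of $(\U,T)$, because applying universality to $T\restriction Z$ alone only gives $\dist(i(x),U[\U])=\dist(x,T[X])$, which can exceed $D_T(x)=\dist(x,T[\U])$, and then $D_S=D_T$ on $X$ would fail.
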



\begin{prop}
	The set $\mathcal{U}(\U)$ is non-dense in the space $\mathcal{R}(\U)$ with a uniform convergence topology $\tau_u$.
\end{prop}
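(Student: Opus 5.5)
The plan is to exhibit one explicit retraction $T\in\mathcal{R}(\U)$ and one $\eps>0$ such that the uniform neighbourhood $V(T)_{\eps}$ contains no universal retraction at all. I take $T=\mathrm{Id}_{\U}$, which is trivially a $1-$Lipschitz retraction with $T[\U]=\U$, and $\eps=\frac12$. The idea is that universality forces a retraction to move some point a large distance, while every member of $V(\mathrm{Id}_{\U})_{1/2}$ moves every point by less than $\frac12$.

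\textbf{Step 1: a test triple.} Let $X=\{x,a\}$ with $d(x,a)=1$. Define $R(x)=R(a)=a$. Then $R$ is a $1-$Lipschitz retraction of $X$ onto $R[X]=\{a\}$, and $\dist(x,R[X])=1$. This is a separable metric space with a $1-$Lipschitz retraction, so it is admissible in the definition of universality.

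\textbf{Step 2: apply universality.} Suppose $S\in\mathcal{U}(\U)\cap V(\mathrm{Id}_{\U})_{1/2}$. By the definition of a universal retraction (equivalently, by Theorem~\ref{URuu} and condition $(UR)$ applied to the triple $(X,R,p)$ with $p(x)=1$, $p(a)=0$), there is an isometry $i\colon X\to\U$ with $S\circ i=i\circ R$. In particular $S(i(x))=i(R(x))=i(a)$, and therefore
$$d_{\U}(S(i(x)),i(x))=d_{\U}(i(a),i(x))=d(a,x)=1.$$

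\textbf{Step 3: contradiction.} Membership of $S$ in $V(\mathrm{Id}_{\U})_{1/2}$ gives $d_{\U}(S(y),y)<\frac12$ for every $y\in\U$. Taking $y=i(x)$ contradicts Step 2. Hence $V(\mathrm{Id}_{\U})_{1/2}\cap\mathcal{U}(\U)=\emptyset$, so $\mathcal{U}(\U)$ is not dense in $(\mathcal{R}(\U),\tau_u)$.

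The only point that needs care is checking that the two-point triple is legitimate in the definition of universality, and that the distance condition $\dist(x,R[X])=\dist(i(x),S[\U])$ imposes no obstruction; it imposes none, because we only use the commutation relation $S\circ i=i\circ R$. The same argument works for any $T$ whose uniform distance to the identity is finite: replacing $d(x,a)=1$ by a distance larger than $\sup_y d_{\U}(T(y),y)+\eps$ shows that $V(T)_{\eps}$ also misses $\mathcal{U}(\U)$. So in fact $\mathcal{U}(\U)$ is uniformly far from every such retraction.
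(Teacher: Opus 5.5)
Your proof is correct, but it takes a different route from the paper's. The paper tests against a constant retraction $T\equiv c$. A retraction uniformly $\eps$-close to it has bounded range. The paper then invokes Doucha's theorem that some $R\in\mathcal{U}(\U)$ has unbounded range, and applies universality of $U$ to the whole space $(\U,R)$. This produces an isometry $i$ with $i\circ R=U\circ i$, so the bounded set $U[i[\U]]$ would coincide with the unbounded set $i[R[\U]]$.

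You instead test against $\mathrm{Id}_{\U}$, which does belong to $\mathcal{R}(\U)$ as the paper defines it. You apply universality only to a two-point space collapsed onto one of its points. This shows that every universal retraction has unbounded displacement, $\sup_y d_{\U}(S(y),y)=\infty$.

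Your argument is more elementary and self-contained: it needs neither an external existence theorem nor an infinite test space. The two arguments exclude different regions of $\mathcal{R}(\U)$:
\begin{itemize}
\item The paper's shows that universal retractions have unbounded range, so it rules out uniform neighbourhoods of retractions with bounded image, such as constants. Your stated generalization to retractions of bounded displacement does not cover these, since a constant map has infinite displacement.
\item Yours rules out neighbourhoods of the identity, whose range is unbounded, so the paper's argument says nothing there.
\end{itemize}

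Your two-point trick would also let the paper drop its appeal to Doucha's theorem. Apply universality to a two-point space of diameter $>2\eps$ with the identity retraction. Both points then land in $S[\U]$, which is impossible if $S[\U]$ lies in the $\eps$-ball around $c$.
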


\begin{proof}
	Consider a constant $1-$Lipschitz retraction $T \colon \U \to \U, \ T \equiv c$ for some $c \in \U.$ Suppose on the contrary that for some $\varepsilon>0$ there is $U \in \mathcal{U}[\U]$ with 
	$$\forall_{x \in \U} \ d_{\U}(U(x),c)<\varepsilon.$$ Therefore $U[\U]$ is bounded. Recall that there is $R \in \mathcal{U}[{\U}]$ with an unbounded retract $R[\U]$ (see [\cite{d},Theorem 2.8.]). Then by a definition of universality of $U$ we have
	$$\forall_{x \in \U} \ i(R(x))=U(i(x))$$
	for some isometry $i \colon \U \to \U.$ Consequently, 
	$\{U(i(x)) \colon x \in \U\}$ is bounded, while \newline 
	$\{i(R(x)) \colon x \in \U\}$ is unbounded, contradiction.
\end{proof}

One can consider only $1-$Lipschitz retractions acting onto retracts of universal, ultrahomogeneous retractions.
Denote by $T=\{U[\U] \colon U \in \mathcal{U}[\U]\}.$
Both results are true in all topologies $\tau_p,\tau_{pr},\tau_u$.
\begin{prop} \label{noa}
	Either for all $t \in T$ the set $\mathcal{U}(\U,t)$ is dense in $\mathcal{R}(\U,t)$ or for all $t \in T$ the set $\mathcal{U}(\U,t)$ is non-dense in $\mathcal{R}(\U,t).$
\end{prop}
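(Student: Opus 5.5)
The plan is to show that any two retracts $t_1,t_2\in T$ are related by an autoisometry of $\U$, and that this autoisometry conjugates the pair $(\mathcal{R}(\U,t_1),\mathcal{U}(\U,t_1))$ onto the pair $(\mathcal{R}(\U,t_2),\mathcal{U}(\U,t_2))$ homeomorphically in each of the topologies $\tau_p,\tau_{pr},\tau_u$. Density and non-density are preserved by homeomorphisms of pairs, so the dichotomy follows at once: if $\mathcal{U}(\U,t_1)$ is dense for one $t_1$, it is dense for every $t_2$.

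\textbf{Step 1.} Fix $t_1=U_1[\U]$ and $t_2=U_2[\U]$ with $U_1,U_2\in\mathcal{U}(\U)$. Apply Theorem \ref{retauto} with $U=U_2$ and $T=U_1$, i.e. (i)$\Rightarrow$(ii). This gives an autoisometry $I\colon\U\to\U$ with $U_2\circ I=I\circ U_1$ and $\dist(x,U_1[\U])=\dist(I(x),U_2[\U])$ for all $x\in\U$. Remark \ref{rnr} then gives $I[t_1]=t_2$.

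\textbf{Step 2.} Define $\Phi\colon\mathcal{R}(\U,t_1)\to\mathcal{R}(\U,t_2)$ by $\Phi(R)=I\circ R\circ I^{-1}$. This is a $1$-Lipschitz retraction, since it is idempotent and $I$ is an isometry. Its image is $I[R[\U]]=I[t_1]=t_2$. The inverse map is $S\mapsto I^{-1}\circ S\circ I$, so $\Phi$ is a bijection.

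\textbf{Step 3.} Show $\Phi[\mathcal{U}(\U,t_1)]=\mathcal{U}(\U,t_2)$. Take $R\in\mathcal{U}(\U,t_1)$. The autoisometry $I^{-1}$ satisfies $R\circ I^{-1}=I^{-1}\circ\Phi(R)$, so condition (iii) of Theorem \ref{retauto} holds with $U=R$, $T=\Phi(R)$ and autoisometry $I^{-1}$. Since $R$ is universal and ultrahomogeneous, Theorem \ref{retauto} shows that $\Phi(R)$ is too. The reverse inclusion follows by the symmetric argument applied to $\Phi^{-1}$.

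\textbf{Step 4.} Show $\Phi$ is a homeomorphism in each topology. For $\tau_u$ we have
\[\sup_x d_\U(\Phi(R)(x),\Phi(R')(x))=\sup_y d_\U(R(y),R'(y)),\]
so $\Phi$ is an isometry for the sup metric. For $\tau_p$, the map $\Phi$ sends the basic set $\mathcal{O}(R)_{X,\eps}$ onto $\mathcal{O}(\Phi(R))_{I[X],\eps}$. For $\tau_{pr}$, Remark \ref{p=pr} says it coincides with $\tau_p$ on these subspaces; alternatively one checks directly that $D_{\Phi(R)}=D_R\circ I^{-1}$, so basic $W$-neighbourhoods are also mapped to basic ones.

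I expect no serious obstacle. The one delicate point is Step 3, where the conjugation relation must be written in exactly the form required by condition (iii) of Theorem \ref{retauto}. That means keeping track of which retraction plays the role of the known universal $U$ and whether the autoisometry is $I$ or $I^{-1}$.
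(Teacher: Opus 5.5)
Your proposal is correct and follows essentially the paper's argument. Both proofs take the autoisometry $I$ from Theorem \ref{retauto}, use Remark \ref{rnr} to get $I[t_1]=t_2$, and transport retractions and basic neighbourhoods by the conjugation $R\mapsto I\circ R\circ I^{-1}$. Your write-up is more careful in two places: you justify through Theorem \ref{retauto} (iii)$\Rightarrow$(i) that conjugates stay in $\mathcal{U}(\U,t_2)$, where the paper cites only Remark \ref{rnr}, and you evaluate the transported distances at $I(x)$, which the paper's displayed computation glosses over.
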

\begin{proof}
	Fix $t,s \in T$  and respective $T,S \in \mathcal{U}[\U]$ such that $T[\U]=t, \ S[\U]=s.$ By Theorem \ref{retauto} we can fix an autoisometry $I \colon \U \to \U$ with $I \circ T=S \circ I, \ \dist(x,t)=\dist(I(x),s)$ for all $x \in \U$. Take any retractions $R_s \in \mathcal{R}(\U,s), \ V \in \mathcal{U}(\U,t)$ and note that by Remark \ref{rnr} we have $W=I \circ V \circ I^{-1} \in \mathcal{U}(\U,s)$  and $R_t:=I^{-1} \circ R_s \circ I\in \mathcal{R}(\U,t).$ Observe that for any $x \in \U$ we have
    $$d_{\U}(V(x),R_t(x))=d_{\U}(I^{-1} \circ I \circ V \circ I^{-1} \circ I(x),I^{-1} \circ R_s \circ I(x))=$$ 
    $$=d_{\U}(I \circ V \circ I^{-1}(x),R_s(x))=d_{\U}(W(x),R_s(x)).$$ This equality, together with the definitions of the topologies $\tau_p=\tau_{pr}$ (by Remark \ref{p=pr}) and $\tau_u$, show that a neighbourhood of $R_t$ is disjoint from $\mathcal{U}(\U, t)$ if and only if the corresponding neighbourhood of $R_s$ is disjoint from $\mathcal{U}(\U, s)$.
\end{proof}

\begin{cor}
The set $\mathcal{U}(\U)$ is dense in $\bigcup_{t \in T} \mathcal{R}(\U,t)$ if there is $t \in T$ such that the set $\mathcal{U}(\U,t)$ is dense in $\mathcal{R}(\U,t)$ is dense.
\end{cor}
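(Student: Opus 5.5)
The plan is to derive the corollary from Proposition \ref{noa}, together with the observation that $\mathcal{U}(\U)$ is contained in $\bigcup_{t\in T}\mathcal{R}(\U,t)$. For this containment, note that every $U\in\mathcal{U}(\U)$ is a retraction onto $U[\U]$, and $U[\U]\in T$ by the definition of $T$. Hence
$$\mathcal{U}(\U)=\bigcup_{t\in T}\mathcal{U}(\U,t)\subseteq \bigcup_{t\in T}\mathcal{R}(\U,t).$$

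Now assume that for some $t_0\in T$ the set $\mathcal{U}(\U,t_0)$ is dense in $\mathcal{R}(\U,t_0)$. Proposition \ref{noa} gives a dichotomy: either $\mathcal{U}(\U,t)$ is dense in $\mathcal{R}(\U,t)$ for every $t\in T$, or it is dense for none. Since it is dense for $t_0$, the first alternative holds, so $\mathcal{U}(\U,t)$ is dense in $\mathcal{R}(\U,t)$ for every $t\in T$.

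To obtain density of $\mathcal{U}(\U)$ in the union, fix $R\in\bigcup_{t\in T}\mathcal{R}(\U,t)$ and a basic neighbourhood $N$ of $R$ in the chosen topology ($\tau_p$, $\tau_{pr}$ or $\tau_u$), relativized to the union. Choose $t\in T$ with $R\in\mathcal{R}(\U,t)$. Then $N\cap\mathcal{R}(\U,t)$ is a neighbourhood of $R$ in the subspace $\mathcal{R}(\U,t)$ with its subspace topology. By the density established above, this neighbourhood meets $\mathcal{U}(\U,t)\subseteq\mathcal{U}(\U)$. So $N$ meets $\mathcal{U}(\U)$, which proves density.

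The only point that needs care is a topological compatibility check: the topology Proposition \ref{noa} uses on $\mathcal{R}(\U,t)$ must be the subspace topology inherited from $\mathcal{R}(\U)$. For $\tau_p$ and $\tau_u$ this holds by definition. For $\tau_{pr}$ it follows from Remark \ref{p=pr}, since $\tau_{pr}$ and $\tau_p$ agree on $\mathcal{R}(\U,t)$. This is the main (mild) obstacle. The rest is just applying the dichotomy, so the corollary holds in all three topologies.
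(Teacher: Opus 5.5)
Your proof is correct and is essentially the paper's: Proposition~\ref{noa} upgrades density from one $t_0$ to every $t\in T$, and then $\bigcup_{t\in T}\mathcal{R}(\U,t)\subset\bigcup_{t\in T}\overline{\mathcal{U}(\U,t)}\subset\overline{\mathcal{U}(\U)}$, which you simply state in terms of neighbourhoods. You also rightly stop at density and do not use the paper's extra inclusion $\overline{\mathcal{U}(\U)}\subset\bigcup_{t\in T}\mathcal{R}(\U,t)$. That inclusion is automatic when the closure is taken inside the union, but it is false for the $\tau_p$-closure in all of $\mathcal{R}(\U)$ by Theorem~\ref{dp}.
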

\begin{proof}
Suppose that there is $t_0 \in T$ with $\mathcal{R}(\U,t_0) \subset \overline{\mathcal{U}(\U,t_0)} .$ By Proposition \ref{noa} we have $\mathcal{R}(\U,t) \subset \overline{\mathcal{U}(\U,t)}$ for all $t \in T.$ Then we have
$$\bigcup_{t \in T} \mathcal{R}(\U,t)\subset \bigcup_{t \in T} \overline{\mathcal{U}(\U,t)} \subset \overline{\bigcup_{t \in T} \mathcal{U}(\U,t)}=\overline{\mathcal{U}(\U)} \subset \bigcup_{t \in T} \mathcal{R}(\U,t).$$ As a result, $\overline{\mathcal{U}(\U)}=\bigcup_{t \in T} \mathcal{R}(\U,t).$
\end{proof}
These results suggest the following problem

\begin{pyt}
\begin{enumerate}[(i)]
\item Is it true that $\mathcal{U}(\U,t)$ is dense in $\mathcal{R}(\U,t)$ in $\tau_{pr}$ for any $t \in T$?
\item Is it true that $\mathcal{U}(\U)$ is dense in $\bigcup_{t \in T} \mathcal{R}(\U,t)$ with $\tau_u$?
\end{enumerate}
\end{pyt}

\end{document}